\renewcommand{\arraystretch}{1.25}
\newtheorem{thm}{Theorem}[section]
\newtheorem{lemma}[thm]{Lemma}
\newtheorem{cor}[thm]{Corollary}
\newtheorem{prop}[thm]{Proposition}
\theoremstyle{remark}
\newtheorem*{remark}{Remark}
\theoremstyle{definition}
\newtheorem{defn}[thm]{Definition}
\newtheorem{question}[thm]{Question}
\newtheorem{example}[thm]{Example}
\newcommand{\thistheoremname}{}
\newtheorem{genericthm}[thm]{\thistheoremname}
\newenvironment{namedthm}[1]
  {\renewcommand{\thistheoremname}{#1}%
   \begin{genericthm}}
  {\end{genericthm}}
\numberwithin{equation}{section}
\def\ZZ{\mathbb{Z}}
\def\QQ{\mathbb{Q}}
\def\PP{\mathbb{P}}
\def\AA{\mathbb{A}}
\def\FF{\mathbb{F}}
\def\RR{\mathbb{R}}
\def\CC{\mathbb{C}}
\def\C{\mathcal{C}}
\def\L{\mathcal{L}}
\renewcommand{\P}{\mathcal{P}}
\def\Q{\mathcal{Q}}
\def\F{\mathcal{F}}
\def\Poly{\mathrm{Poly}}
\def\im{\mathrm{im}}
\def\conf{\mathrm{Conf}}
\def\confhat{\widehat{\mathrm{Conf}}}
\def\endo{\mathrm{End}}
\def\F{\mathcal{F}}
\def\aut{\mathrm{Aut}}
\def\aff{\mathrm{Aff}}
\def\preper{\mathrm{PrePer}}
\def\wh{\widehat}
\def\PGL{\mathrm{PGL}}
\def\M{\mathcal{M}}
\def\Mhat{\widehat{\M}}
\begin{document}

\title[Dynamical moduli spaces]{Dynamical moduli spaces and\\ polynomial endomorphisms of configurations}

\author{Talia Blum}
\address{Department of Mathematics\\
Stanford University\\
Stanford, CA 94305}
\email{taliab@stanford.edu}

\author{John R. Doyle}
\address{Department of Mathematics\\
Oklahoma State University\\
Stillwater, OK 74078}
\email{john.r.doyle@okstate.edu}

\author{Trevor Hyde}
\address{Dept. of Mathematics\\
University of Chicago \\
Chicago, IL 60637
}
\email{tghyde@uchicago.edu}

\author{Colby Kelln}
\address{Department of Mathematics\\
Cornell University\\
Ithaca, NY, 14853}
\email{ck765@cornell.edu}

\author{Henry Talbott}
\address{Department of Mathematics\\
University of Michigan\\
Ann Arbor, MI, 48109}
\email{htalbott@umich.edu}

\author{Max Weinreich}
\address{Department of Mathematics\\
Brown University\\
Providence, RI 02906}
\email{max\_weinreich@brown.edu}

\maketitle

\section{Introduction}
\label{sec intro}

The study of preperiodic points of polynomials is of central importance in complex and arithmetic dynamics. Typically, one starts with a field $K$ and a polynomial $f(x) \in K[x]$ and then tries to understand the preperiodic points of $f(x)$; that is, the points $\alpha$ for which the {\it orbit}
$
    \{\alpha, f(\alpha), f(f(\alpha)), f(f(f(\alpha))), \ldots\}
$
is finite. We take a different perspective: given a configuration $q$ of finitely many points on the affine line, we seek to understand the collection of all polynomials $f(x)$ such that $f(q) \subseteq q$. To that end let
\[
    \conf^n := \{(q_1, q_2, \ldots, q_n) \in \AA^n : q_i \neq q_j \text{ for all } i \ne j\}
\]
denote the configuration space of $n$ distinct points on the affine line, and define 
\[
    \endo(q) := \{f(x) : f(q) \subseteq q\}
\]
to be the semigroup (with respect to composition) of all polynomials which stabilize $q \in \conf^n$ as a set.

\begin{remark}
For much of this article, we will allow $K$ to be an arbitrary field. In the instances where our statements only apply to certain fields, we will specify the appropriate base fields.
\end{remark}

Our general goal is to begin addressing the following question.

\begin{question}
\label{quest:intro semigroup}
How does the semigroup $\endo(q)$ vary with $q \in \conf^n$?
\end{question}

Lagrange interpolation implies the existence of a unique polynomial of degree at most $n-1$ interpolating any set-theoretic endomorphism of $q \in \conf^n$.
For this reason, we focus on the elements of $\endo(q)$ with degree less than $n$. Let $\endo_d(q)$ denote the degree-$d$ graded component of $\endo(q)$.

\begin{question}
\label{quest:intro max cardinality}
For $0 \leq d \leq n - 1$, how does $\endo_d(q)$ vary with $q \in \conf^n$? What is the maximal cardinality $E_{n,d}$ of $\endo_d(q)$ as $q$ ranges over $\conf^n$ and which configurations achieve it?
\end{question}

A \emph{portrait} on $[n] := \{1,\ldots,n\}$ is a function $\P : [n] \rightarrow [n]$. The space $\conf_{\P,d}$ of \emph{degree-$d$ realizations} of a portrait $\P$ on $[n]$ is the subspace of $\conf^n$ defined by
\[
    \conf_{\P,d} := \{q \in \conf^n : \text{there exists a degree-$d$ polynomial $f$ such that } f(q_i) = q_{\P(i)} \text{ for all } i\}.
\]

The affine group $\aff_1$ of linear polynomials $\ell(x) = ax + b$ acts naturally on $\conf^n$ by
\[
    \ell(q) := (\ell(q_1), \ldots, \ell(q_n)).
\]
Observe that the action of $\aff_1$ stabilizes $\conf_{\P,d}$: Indeed, if there exists a degree-$d$ polynomial $f$ such that $f(q_i) = q_{\P(i)}$ for all $i \in [n]$, then $\widetilde{f} := \ell \circ f \circ \ell^{-1}(x)$ is also a degree-$d$ polynomial such that $\widetilde{f}(\ell(q_i)) = \ell(q_{\P(i)})$ for all $i \in [n]$, so $\ell(q) \in \conf_{\P,d}$. The \emph{degree-$d$ portrait moduli space} of a portrait $\P$ is defined to be the quotient
\[
    \M_{\P,d} := \conf_{\P,d}/\aff_1.
\]
Since $\aff_1$ acts sharply $2$-transitively on $\AA^1$, $\M_{\P,d}$ has the following simple model:
\[
    \M_{\P,d} = \{(q_1,\ldots,q_n) \in \conf_{\P,d} : q_1 = 0 \text{ and } q_2 = 1\}.
\]

Observe that a configuration $q$ has several degree-$d$ endomorphisms precisely when $q$ lies in the intersection of several portrait realization spaces. Thus, Question \ref{quest:intro max cardinality} leads us to consider the spaces
\[
    \conf_{\P,\Q,d} := \conf_{\P,d} \cap \conf_{\Q,d} \quad\text{and}\quad \M_{\P,\Q,d} := \conf_{\P,\Q,d}/\aff_1.
\]
We refer to $\conf_{\P,d}$ and $\conf_{\P,\Q,d}$ collectively as {\it portrait realization spaces}, and to $\M_{\P,d}$ and $\M_{\P,\Q,d}$ as {\it portrait moduli spaces}.

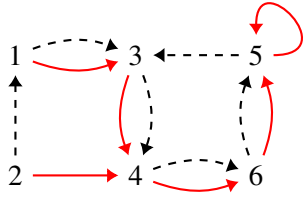
\begin{figure}[h]
    \centering
        
    \begin{tikzpicture}[shorten >= 0pt, scale=0.4]
    \node (00) at (0, 0) {2};
    \node (10) at (4, 0) {4};
    \node (11) at (4, 4) {3};
    \node (01) at (0, 4) {1};
    \node (20) at (8, 0) {6};
    \node (21) at (8, 4) {5};
    \draw[-{Latex[length=1.5mm,width=2mm]}, thick, red] (00) edge (10);
    \draw[-{Latex[length=1.5mm,width=2mm]}, thick, red] (01) edge [out=-20, in=-160] (11);
    \draw[-{Latex[length=1.5mm,width=2mm]}, thick, red] (11) edge [out=250, in=110] (10);
    \draw[-{Latex[length=1.5mm,width=2mm]}, thick, red] (10) edge [out=-20, in=-160] (20);
    \draw[-{Latex[length=1.5mm,width=2mm]}, thick, red] (20) edge [out=70, in=290] (21);
    \draw[-{Latex[length=1.5mm,width=2mm]}, thick, red] (21) edge [out=0, in=90, looseness = 7] (21);

    \path[-{Latex[length=1.5mm,width=2mm]}, thick, dashed] (00) edge (01);
    \path[-{Latex[length=1.5mm,width=2mm]}, thick, dashed] (01) edge [out=20, in=160] (11);
    \path[-{Latex[length=1.5mm,width=2mm]}, thick, dashed] (10) edge [out=20, in=160] (20);
    \path[-{Latex[length=1.5mm,width=2mm]}, thick, dashed] (11) edge [out=290, in=70] (10);
    \path[-{Latex[length=1.5mm,width=2mm]}, thick, dashed] (20) edge [out=110, in=250] (21);
    \path[-{Latex[length=1.5mm,width=2mm]}, thick, dashed] (21) edge (11);
    \end{tikzpicture}
    \caption{An illustration of portraits $\P$ (red, solid) and $\Q$ (black, dashed) acting on $\{1,2,3,4,5,6\}$. For example, the red, solid arrow from 2 to 4 indicates that $\P(2) = 4$. }
    \label{fig:portrait_example2}
\end{figure}

The isomorphism classes of the moduli spaces $\M_{\P,d}$ and $\M_{\P,\Q,d}$ depend only on the combinatorial type of the portrait $\P$ and the portrait pair $\{\P,\Q\}$, respectively; see Proposition \ref{prop:conjugation_isomorphism} for a more precise statement. We therefore ask the following:

\begin{question}
\label{quest: combinatorics}
How do combinatorial properties of portraits $\P$ and $\Q$ determine geometric properties of the moduli spaces $\M_{\P,d}$ and $\M_{\P,\Q,d}$?
\end{question}

In this paper, we initiate the study of these questions guided by computational results in low degrees.

\subsection{Results} We briefly summarize the main results of this article.

\subsubsection{The geometry of $\M_{\P,d}$ and $\M_{\P,\Q,d}$}
Our first result provides a combinatorial characterization of portraits $\P$ for which the 
moduli space $\M_{\P,d}$ 
is nonempty and achieves the expected dimension.

\begin{thm}
\label{thm intro single portrait}
Let $K$ be an algebraically closed field of characteristic $0$. Let $n\geq 2$ and $d\geq 2$ be integers, and let $\P : [n] \rightarrow [n]$ be a portrait. Then $\M_{\P,d}(K) \neq \emptyset$ if and only if
\begin{enumerate}
    \item Every element of $[n]$ has at most $d$ preimages under $\P$, and
    \item For every integer $k\geq 1$, $\P$ has at most $\frac{1}{k}\sum_{j\mid k}\mu(k/j)d^{j}$ periodic cycles of length $k$.
\end{enumerate}
In this case, $\dim \M_{\P,d} = \min\{d - 1, n - 2\}$.
\end{thm}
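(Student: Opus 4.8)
The plan is to prove both implications of the equivalence and then extract the dimension from the incidence correspondence between degree-$d$ polynomials and their realizations. I will freely use that $\aff_1$ stabilizes $\conf_{\P,d}$ (established above).

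\textbf{Necessity.} Suppose $q \in \conf_{\P,d}(K)$ is realized by a degree-$d$ polynomial $f$, so $f(q_i) = q_{\P(i)}$ for all $i$. For (1), if $\P(i_1) = \cdots = \P(i_m) = j$ with the $i_s$ distinct, then $q_{i_1}, \ldots, q_{i_m}$ are distinct solutions of $f(x) = q_j$, whence $m \le \deg f = d$. For (2), a length-$k$ cycle $(i_1\,i_2\,\cdots\,i_k)$ of $\P$ produces distinct points $q_{i_1},\ldots,q_{i_k}$ that are cyclically permuted by $f$, i.e. a genuine $f$-orbit of exact period $k$, and distinct $\P$-cycles give distinct $f$-orbits. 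Since the points of exact period $k$ are roots of the dynatomic polynomial $\prod_{j \mid k}(f^{j}(x)-x)^{\mu(k/j)}$, a polynomial of degree $\sum_{j\mid k}\mu(k/j)d^{j}$ (using $\deg(f^j(x)-x)=d^j$ as $d\geq 2$), there are at most that many exact period-$k$ points, hence at most $\frac{1}{k}\sum_{j\mid k}\mu(k/j)d^{j}$ orbits of length $k$; this gives (2).

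\textbf{Sufficiency via the incidence variety.} Assume (1) and (2), let $V_d = \{f : \deg f = d\} \subseteq \AA^{d+1}$, and set
\[
    Z = \{(f,q) \in V_d \times \conf^n : f(q_i) = q_{\P(i)} \text{ for all } i\},
\]
with projections $\pi_1 \colon Z \to V_d$ and $\pi_2 \colon Z \to \conf^n$, so $\im \pi_2 = \conf_{\P,d}$. The main step will be to show that a generic $f \in V_d$ admits a realization of $\P$. For generic $f$ the number of exact period-$k$ orbits equals $\frac{1}{k}\sum_{j\mid k}\mu(k/j)d^j$ for every $k$, so by (2) I can injectively assign the length-$k$ cycles of $\P$ to $k$-orbits of $f$, placing all periodic points $q_i$. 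I then place the non-periodic indices in order of increasing distance to the cycles: if $\P(i)=j$ with $q_j$ already placed, choose $q_i \in f^{-1}(q_j)$. Here one uses that for generic $f$ each relevant $q_j$ is not a critical value, so $|f^{-1}(q_j)| = d$; that any periodic preimage of $q_j$ must be its unique cycle-predecessor (already placed); and that every remaining point of $f^{-1}(q_j)$ is automatically non-periodic. Since $|\P^{-1}(j)| \le d$ by (1) there is room in each fiber, and distinct fibers are disjoint, so all chosen points are distinct. This realizes $\P$; in particular $\conf_{\P,d} \ne \emptyset$, so $\M_{\P,d}(K) \ne\emptyset$, and $\pi_1$ is dominant.

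\textbf{Dimension.} Because a fixed $f$ has only finitely many periodic and preperiodic points, $\pi_1$ has finite generic fibers, and being dominant it gives $\dim Z = \dim V_d = d+1$. For $\pi_2$, the fiber over $q \in \conf_{\P,d}$ is $\{f \in V_d : f(q_i) = q_{\P(i)}\}$, cut out by $n$ affine-linear conditions on the coefficients of $f$ whose coefficient matrix is Vandermonde in the distinct $q_i$, hence of rank $\min\{n,d+1\}$; thus each such fiber has dimension $\max\{0,\,d+1-n\}$. The fiber-dimension theorem then yields
\[
    \dim \conf_{\P,d} = \dim Z - \max\{0,\,d+1-n\} = \min\{d+1,\, n\}.
\]
Since $\aff_1$ acts freely on configurations of $n \ge 2$ distinct points (an affine map fixing two distinct points is the identity), passing to the quotient gives $\dim \M_{\P,d} = \dim\conf_{\P,d} - 2 = \min\{d-1,\, n-2\}$.

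\textbf{Main obstacle.} The technical heart is the genericity argument in the sufficiency step: one must confirm that a single generic degree-$d$ polynomial simultaneously attains the maximal dynatomic orbit counts in every length occurring in $\P$ and has all required fibers of full size $d$ avoiding the points already placed, so that conditions (1) and (2) are \emph{exactly} what is needed to thread the realization through without collisions. Showing that these loci (maximal cycle counts; prescribed points not critical values) are dense open in $V_d$ is where the real work lies; the two projection computations above are then routine.
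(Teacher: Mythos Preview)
Your argument is correct in outline and uses the same incidence variety $Z = \{(f,q)\}$ that the paper calls $\Poly_{\P,d}$, but the two key technical steps are handled differently.

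For sufficiency, you argue by genericity: the locus of $f \in V_d$ with the maximal number of $k$-cycles for each $k \le n$ and with no preperiodic critical point (up to the bounded depth needed) is open, hence dense once nonempty. You correctly flag nonemptiness as the ``main obstacle,'' and this is precisely where the paper's argument diverges from yours. Rather than proving genericity abstractly, the paper exhibits the single polynomial $f(x) = x^d + 1$ and checks directly that (i) its unique critical point $0$ has unbounded orbit, so no preperiodic critical points exist, and (ii) it has exactly $M_k(d)$ cycles of every length $k$, the latter via a complex-dynamical input (a rationally indifferent cycle would have to attract the critical orbit, which is impossible here). This explicit witness is exactly what certifies that your good locus is nonempty; without it, or some substitute, your genericity claim is incomplete.

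For the dimension, your two-projection argument on $Z$ (finite fibers over $V_d$ because preperiodic points are finite for $d \ge 2$; affine-linear Vandermonde fibers over $\conf_{\P,d}$) is more self-contained than the paper, which instead invokes the dimension results of Doyle--Silverman for $\Poly_{\P,d}$ and then observes $\Poly_{\P,d} \cong \conf_{\P,d}$ when $n \ge d+1$ via Lagrange interpolation. Your route avoids that citation at the cost of a little care with irreducibility (which is harmless here since \emph{every} fiber of each projection has the claimed dimension). In short: your structure is sound, your dimension count is cleaner, but the paper's explicit $x^d + 1$ is what actually closes the gap you identified.
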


\begin{remark}
One direction of Theorem~\ref{thm intro single portrait} follows almost immediately from Proposition 15.1 and Theorem 15.8 of \cite{doyle/silverman}. More precisely, the cited results of \cite{doyle/silverman} are used in the proof of Theorem~\ref{thm intro single portrait} to show that if $d \ge n - 1$ and $\M_{\P,d}$ is nonempty, then conditions (1) and (2) are satisfied and $\dim \M_{\P,d} = d-1$; see the proof of Proposition~\ref{prop admissible}. The other direction of the statement does not follow directly from the results of \cite{doyle/silverman}, nor does the statement in the case that $d \le n - 2$.
\end{remark}

We will call a portrait $\P$ satisfying the two conditions in Theorem \ref{thm intro single portrait} an \emph{admissible degree-$d$ portrait.} If $\P$ and $\Q$ are admissible degree-$d$ portraits on $n$ points, then a simple count of parameters and constraints suggests that the dimension of $\M_{\P,\Q,d}$ should be $2d - n$ (see Section \ref{sec:data}). 
There are finitely many admissible degree-$d$ portraits---hence finitely many pairs $\{\P,\Q\}$ of such portraits---on a set with $n$ elements, so in principle one can survey all of the portrait moduli spaces $\M_{\P,\Q,d}$ for any fixed $n$ and $d$. We have conducted this survey for $(n,d) = (4,2)$ and $(n,d) = (6,3)$; in these instances, the expected dimension is zero.

For computational reasons, it is simpler to work with the moduli spaces 
\[
    \wh{\M}_{\P,\Q,d} := \bigcup_{e\leq d} \M_{\P,\Q,e} \subseteq \conf^n/\aff_1
\]
of all affine equivalence classes of degree-at-most-$d$ realizations of $\P$ and $\Q$.
Since we are considering the case $n = 2d$, and since $\Mhat_{\P,\Q,d} \subseteq \Mhat_{\P,d}$ by definition, it follows from Theorem~\ref{thm intro single portrait} that the dimension of $\Mhat_{\P,\Q,d}$ is bounded above by $\min\{d-1,n-2\} = d - 1$. Table~\ref{tab:intro dim_deg2_and3} (which we also include as Table~\ref{tab:dim_deg2_and3} in Section~\ref{sec:computational} for convenience) lists, for $d = 2, 3$ and $-1 \le n \le d - 1$, the number of pairs of admissible degree-$d$ portraits on $2d$ points up to combinatorial equivalence for which, in characteristic zero, we have $\dim \wh{\M}_{\P,\Q,d} = n$. (Here and throughout the article, we use the convention that the empty variety has dimension $-1$.)

\begin{table}[h]
    \caption{For each $d \in \{2,3\}$ and $n \in \{-1,0,1,2\}$, the number of equivalence classes of pairs $\{\P,\Q\}$ of admissible degree-$d$ portraits on $[2d]$ satisfying $\dim \Mhat_{\P,\Q,d} = n$}
    \label{tab:intro dim_deg2_and3}
    \begin{tabular}{l||cccc||c}
    & \multicolumn{4}{c||}{$\dim \Mhat_{\P,\Q,d}$} & \\\hline
    $d$ & $-1$ & $0$ & $1$ & $2$ & Total\\\hline
    $2$ & $198$ & $568$ & $14$ &  & $780$\\
    $3$ & $52310$ & $1297349$ & $1065$ & $18$ & $1350742$
    \end{tabular}
\end{table}

While the dimension of $\Mhat_{\P,\Q,d}$ typically agrees with our expectation of $0$, there are portrait pairs $\{\P,\Q\}$ achieving the full range of possible dimensions for $\Mhat_{\P,\Q,d}$.

Theorem \ref{thm intro 2 image} identifies an interesting family of extreme examples of portraits on $n = 2d$ points 
for which $\M_{\P,\Q,d}$ is either as small as possible (i.e., $\M_{\P,\Q,d} = \emptyset$) or as large as possible (i.e., $(d - 1)$-dimensional).
Theorem \ref{thm intro 2 image} accounts for all but one of the $14$ (equivalence classes of) portrait pairs $\{\P,\Q\}$ for which $\dim \wh{\M}_{\P,\Q,2} = 1$ and all of the $18$ pairs for which $\dim \wh{\M}_{\P,\Q,3} = 2$.

First, some terminology: The \emph{fiber partition} of a portrait $\P$ on $n$ points is the partition of $[n]$ given by $\Pi_\P := \{\P^{-1}(i) : i \in \P([n])\}$. A \emph{two-image portrait} is a portrait $\P: [2d] \rightarrow [2d]$ such that $\Pi_\P$ consists of two sets with $d$ elements each. 

\begin{thm}
\label{thm intro 2 image}
Let $d\geq 1$ and let $\Pi := \{A,B\}$ be a partition of $[2d]$ into two sets with $d$ elements each.
\begin{enumerate}
    \item If $\P$ and $\Q$ are two-image portraits with $\Pi_\P = \Pi_\Q = \Pi$, then
    \[
        \conf_{\P,d} = \conf_{\Q,d}.
    \]
    Hence $\conf_{\P,d}$ depends only on the partition $\Pi$; let $\conf_{\Pi,d} := \conf_{\P,d}$ and $\M_{\Pi,d} := \conf_{\Pi,d}/\aff_1$. Thus
    \[
        \M_{\P,\Q,d} = \M_{\Pi,d}.
    \]
    
    \item $\conf_{\Pi,d}$ is the $(d+1)$-dimensional Zariski closed subspace of $\conf^{2d}$ defined by the equations
    \[
        e_k(x_A) = e_k(x_B),
    \]
    for $1\leq k < d$, where $e_k$ is the $k$th elementary symmetric function in $d$ variables and $x_A := \{x_a : a \in A\}$ is the subset of $\{x_1, x_2, \ldots, x_{2d}\}$ of variables indexed by $A$, likewise for $x_B$. 
    
    \item If $\Pi' \neq \Pi$ is any other partition of $[2d]$ into two sets of $d$ elements each and $K$ is a field of characteristic 0, then
    \[
        \conf_{\Pi,d}(K) \cap \conf_{\Pi',d}(K) = \emptyset.
    \]
    Equivalently, if $\P$ and $\Q$ are two-image portraits with distinct fiber partitions, then
    \[
        \M_{\P,\Q,d}(K) = \emptyset.
    \]
    
    \item For each $q \in \conf_{\Pi,d}$, $\endo(q)$ contains at least $2d(2d - 1)$ degree-$d$ endomorphisms of $q$. Hence $E_{2d,d} \geq 2d(2d - 1)$.
\end{enumerate}
\end{thm}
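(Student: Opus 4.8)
The four parts rest on a single elementary observation, so the plan is to establish that first and then read off each conclusion. Realizing a two-image portrait $\P$ with fiber partition $\Pi_\P = \{A,B\}$ at a configuration $q$ means producing a degree-$d$ polynomial $f$ that is constant equal to $q_s$ on $x_A$ and constant equal to $q_t$ on $x_B$, where $q_s, q_t$ are the two image coordinates. Since $|A| = |B| = d$, writing $P_A(x) = \prod_{a \in A}(x - q_a)$ and $P_B(x) = \prod_{b\in B}(x - q_b)$ forces $f - q_s = c\,P_A$ and $f - q_t = c\,P_B$ for a common leading coefficient $c$. I would then subtract to get $c(P_A - P_B) = q_t - q_s$, and observe that the left-hand side is constant if and only if the degree-$<d$ part of $P_A - P_B$ vanishes, i.e. $e_k(x_A) = e_k(x_B)$ for $1 \le k \le d-1$. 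Conversely, given these identities together with the fact that $q$ has distinct coordinates (which automatically forces $e_d(x_A) \ne e_d(x_B)$, since otherwise $P_A = P_B$ would equate two disjoint root sets), one solves uniquely for a nonzero $c$ and obtains $f$ of degree exactly $d$. Crucially this criterion never mentions $s$ or $t$, which gives part (1), and the criterion is precisely the system of equations in part (2).

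For the dimension claim in part (2), I would study the map $\conf_{\Pi,d} \to \AA^{d+1}$ sending $q$ to the shared values $e_1(x_A),\ldots,e_{d-1}(x_A)$ together with $e_d(x_A)$ and $e_d(x_B)$. A target point with distinct prescribed $A$-roots and $B$-roots has finite fiber, since the fiber only records how the $2d$ roots are labeled by $[2d]$, and generic target points yield $2d$ distinct values; thus the map is dominant with finite fibers and $\dim \conf_{\Pi,d} = d+1$, so the $d-1$ defining equations cut out a complete intersection of the expected codimension.

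Part (3) is where the real work lies, and I expect it to be the main obstacle. The key is to repackage the constraints on a fixed $q$: set $F(x) = \prod_{i \in [2d]}(x - q_i)$, which is independent of the partition because $A \sqcup B = [2d]$, so $P_A P_B = F$; and set $\gamma = P_A - P_B$, a nonzero constant by the above. Completing the square, $h := P_B + \tfrac{\gamma}{2}$ satisfies $h^2 = F + \tfrac{\gamma^2}{4}$. If $q$ also lies in $\conf_{\Pi',d}$, then likewise $h'^2 = F + \tfrac{\gamma'^2}{4}$, so $h^2 - h'^2 = \tfrac14(\gamma^2 - \gamma'^2)$ is constant. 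I would then factor $(h-h')(h+h')$: if $\gamma^2 \ne \gamma'^2$ the product is a nonzero constant, forcing both factors constant and contradicting $\deg h = \deg h' = d \ge 1$; and if $\gamma^2 = \gamma'^2$ then $h' = \pm h$, where the case $h' = -h$ contradicts that $h,h'$ are monic (this is where characteristic $\ne 2$ enters), while $h' = h$ forces $P_{B'} \in \{P_B, P_A\}$ and hence $\{A',B'\} = \{A,B\}$, contradicting $\Pi' \ne \Pi$. This establishes part (3).

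Finally, for part (4) I would count realizations. By part (1), for every two-image portrait $\P$ with $\Pi_\P = \Pi$ we have $q \in \conf_{\Pi,d} = \conf_{\P,d}$, so a degree-$d$ realization $f_\P$ exists; since its image values $q_s, q_t$ are coordinates of $q$, we get $f_\P(q) \subseteq q$, so $f_\P \in \endo_d(q)$. Such a portrait is determined by an ordered pair of distinct image indices for the blocks $A$ and $B$, giving $2d(2d-1)$ portraits, and distinct pairs yield polynomials taking different values on $A$ or on $B$, hence distinct endomorphisms. This produces $2d(2d-1)$ elements of $\endo_d(q)$, and taking any such $q$ gives $E_{2d,d} \ge 2d(2d-1)$.
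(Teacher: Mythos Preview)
Your argument is correct. Parts (1), (2), and (4) are essentially the paper's arguments, reorganized around the single criterion $e_k(x_A)=e_k(x_B)$; your dimension count in (2) uses a different but equally valid finite-fiber map (to the elementary symmetric data rather than to the coefficients of the normalized interpolating polynomial).

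Part (3) is where you diverge. The paper does not argue directly: it first proves a ``unique range set'' result (its Theorem~\ref{thm:intro URS}) that if $f^{-1}(\{0,1\})=g^{-1}(\{0,1\})$ as multisets then $f\in\{g,1-g\}$, via Newton--Girard identities relating elementary symmetric functions to power sums, and then deduces (3). Your completing-the-square trick is more direct: with $h=\tfrac12(P_A+P_B)$ one has $h^2=F+\tfrac{\gamma^2}{4}$, and comparing with the analogous $h'$ for $\Pi'$ forces $(h-h')(h+h')$ constant; since $h,h'$ are monic of degree $d$ in characteristic $\neq 2$, $h+h'$ has degree $d\ge 1$, so the product cannot be a nonzero constant, and $h'=\pm h$ quickly yields $\{P_{A'},P_{B'}\}=\{P_A,P_B\}$. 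This avoids the power-sum machinery entirely and in fact only needs characteristic $\neq 2$, not characteristic $0$. What the paper's route buys is the URS theorem as a stand-alone result of independent interest; what yours buys is a shorter, more self-contained proof of (3) with a slightly weaker hypothesis on the characteristic.
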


We deduce Theorem \ref{thm intro 2 image}(3) as a corollary of the following result of possible independent interest.

\begin{thm}
\label{thm:intro URS}
Let $K$ be a field of characteristic 0, and suppose that $f(x), g(x) \in K[x]$ are polynomials such that
\[
    f^{-1}(\{0,1\}) = g^{-1}(\{0,1\})
\]
as sets with multiplicity. Then either $f(x) = g(x)$ or $f(x) = 1 - g(x)$.
\end{thm}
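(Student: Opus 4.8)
The plan is to translate the multiset hypothesis into a single polynomial identity and then analyze it by completing the square. Since a point $\alpha$ cannot satisfy both $f(\alpha) = 0$ and $f(\alpha) = 1$, the multiset $f^{-1}(\{0,1\})$ is exactly the multiset of roots (with multiplicity) of the product $f(x)\bigl(f(x)-1\bigr)$, and likewise for $g$. Two nonzero polynomials over $K$ have identical root multisets in $\overline{K}$ if and only if they are scalar multiples of one another, so the hypothesis is equivalent to
\[
f(x)\bigl(f(x)-1\bigr) = c\, g(x)\bigl(g(x)-1\bigr)
\]
for some $c \in K^\times$. Comparing degrees forces $\deg f = \deg g =: n$, and I would assume $f,g$ are nonconstant of degree $n \ge 1$ (the constant case is degenerate, and in fact false as literally stated---distinct constants outside $\{0,1\}$ both have empty preimage---so it must be excluded).

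Next I would complete the square. Since $\Char K = 0$, set $u := f - \tfrac12$ and $v := g - \tfrac12$, so that $f(f-1) = u^2 - \tfrac14$ and similarly for $g$. The displayed identity becomes
\[
u^2 - c\,v^2 = \frac{1-c}{4}.
\]
The crux is to show $c = 1$. Suppose not. Passing to $\overline{K}$ and choosing $\gamma$ with $\gamma^2 = c \ne 0$, I factor the left-hand side to obtain
\[
(u - \gamma v)(u + \gamma v) = \frac{1-c}{4} \ne 0,
\]
a nonzero constant. In the integral domain $\overline{K}[x]$, a product equal to a nonzero constant forces each factor to be a nonzero constant; in particular the coefficient of $x^n$ in each of $u - \gamma v$ and $u + \gamma v$ must vanish. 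Writing $a,b$ for the leading coefficients of $f,g$ (hence of $u,v$), this yields $a - \gamma b = 0 = a + \gamma b$, whence $2a = 0$. As $\Char K = 0$ and $a \ne 0$, this is a contradiction, so $c = 1$.

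Finally, with $c = 1$ the identity reads $u^2 = v^2$, so $(u-v)(u+v) = 0$ in the integral domain $K[x]$, giving $u = v$ or $u = -v$; translating back yields $f = g$ or $f = 1 - g$, as desired. The main obstacle is the step $c = 1$: a priori the leading coefficients satisfy only the relation $a^2 = c\,b^2$, and it is precisely the characteristic-zero hypothesis---used to conclude $2a \ne 0$---that excludes the spurious possibility $c \ne 1$. Combined with $a^2 = c\,b^2$, this pins down $a = \pm b$, consistently with the two admissible conclusions.
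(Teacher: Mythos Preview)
Your proof is correct and takes a genuinely different route from the paper's. The paper proceeds combinatorially: it splits the common multiset $q$ into the four pieces $r_1,s_1,r_2,s_2$ arising from intersecting the fibers $f^{-1}(0),f^{-1}(1)$ with $g^{-1}(0),g^{-1}(1)$, observes that $f$ and $f-1$ (likewise $g$ and $g-1$) share all but their constant coefficients, and then uses the Newton--Girard identities to deduce $p_k(r_1)=p_k(r_2)$ and $p_k(s_1)=p_k(s_2)$ for $1\le k<d$. Characteristic~$0$ enters there to recover a multiset of size $<d$ from its first $d-1$ power sums, forcing $r_1=r_2$ and $s_1=s_2$, hence one of the two outcomes. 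Your argument bypasses this decomposition entirely: encoding the hypothesis as the single identity $f(f-1)=c\,g(g-1)$, completing the square to $u^2-cv^2=\tfrac{1-c}{4}$, and factoring over $\overline{K}$ to kill the case $c\ne 1$ is both shorter and more elementary, and it isolates the use of characteristic~$0$ to the single step $2a\ne 0$. What the paper's approach buys is a finer picture of how the fibers of $f$ and $g$ interlock, which fits its surrounding discussion of fiber partitions and portraits; what your approach buys is a clean two-line algebraic proof that would transplant easily to other contexts. Your remark that the constant case is genuinely degenerate (two constants outside $\{0,1\}$ give a counterexample) is also a fair observation; the paper's proof implicitly assumes $\deg f\ge 1$ as well.
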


A close analysis of portrait pairs $\{\P,\Q\}$ for which the dimension of $\wh{\M}_{\P,\Q,d}$ differs from expectation revealed the following result as the most common source of deviations.

\begin{thm}
\label{thm intro left associates}
Let $\P$ and $\Q$ be admissible degree-$d$ portraits on $n$ points and suppose that that either
\begin{enumerate}
    \item $d = 2$ and there is at least one pair $i, j \in [n]$ such that $\P(i) = \P(j)$ and $\Q(i) = \Q(j)$,
    \item $d = 3$ and there are at least two pairs $i, j \in [n]$ such that $\P(i) = \P(j)$ and $\Q(i) = \Q(j)$, or
    \item $d$ is arbitrary and the fiber partitions $\Pi_\P$, $\Pi_\Q$ have a common fiber with $d$ elements.
\end{enumerate}
If $q$ is a degree-at-most-$d$ realization of both $\P$ and $\Q$ via the polynomials $f$ and $g$, respectively, then $f$ and $g$ are left associates.
\end{thm}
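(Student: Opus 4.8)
The plan is to show that under each hypothesis the non-constant parts of $f$ and $g$ are scalar multiples of one another; this is exactly the condition for $f$ and $g$ to be left associates, since $f = \ell \circ g$ for some $\ell \in \aff_1$ if and only if $f - f(0) = \lambda\,(g - g(0))$ for a nonzero scalar $\lambda$. Throughout I assume $f$ and $g$ are nonconstant: a constant realizing polynomial forces the corresponding portrait to be constant, hence (by admissibility, which bounds every fiber by $d$) forces $n \le d$, a degenerate situation outside the range of interest (in our applications $n = 2d > d$). I would treat the three hypotheses by reducing (1) and the bulk of (2) to the single clean computation that proves (3).

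I would first dispatch case (3). A common fiber $S$ of size $d$ means $\P(i) = a$ and $\Q(i) = b$ for all $i \in S$, so $f(q_i) = q_a$ and $g(q_i) = q_b$ at the $d$ distinct points $\{q_i : i \in S\}$. Thus $f(x) - q_a$ and $g(x) - q_b$ are nonzero polynomials of degree at most $d$ sharing the $d$ roots $q_i$; each therefore has degree exactly $d$ and equals a nonzero scalar multiple of $P(x) := \prod_{i \in S}(x - q_i)$. Eliminating $P$ yields $f = \lambda g + \mu$ with $\lambda \neq 0$, so $f$ and $g$ are left associates. Next, reduction of (1) to (3): if $d = 2$ and $\P(i) = \P(j)$, $\Q(i) = \Q(j)$ for some $i \neq j$, then $\{i,j\}$ lies in a fiber of $\P$ and a fiber of $\Q$, and admissibility forces each such fiber to equal $\{i,j\}$, a common fiber of size $d = 2$.

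For (2), two coincidence pairs that share an index have a three-element union lying in a single $\P$-fiber and a single $\Q$-fiber; admissibility ($d = 3$) makes this a common fiber of size $3$, and (3) applies. The genuinely new case is two disjoint coincidence pairs $\{a,b\}$ and $\{c,d\}$. Here I would introduce the divided difference of a degree-at-most-$3$ polynomial $h$ with coefficients $c_3, c_2, c_1, c_0$,
\[
D_h(x,y) := \frac{h(x)-h(y)}{x-y} = c_3(x^2+xy+y^2) + c_2(x+y) + c_1,
\]
noting that $h(x) = h(y)$ with $x \neq y$ if and only if $D_h(x,y) = 0$, and that the left-associate class of $h$ is encoded precisely by the point $(c_3 : c_2 : c_1) \in \PP^2$. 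The hypotheses say $D_f$ and $D_g$ both vanish at $(q_a, q_b)$ and $(q_c, q_d)$, so both of their coefficient points lie on the two lines in $\PP^2$ cut out by these two off-diagonal points. Those lines are distinct: their coincidence would force $q_a + q_b = q_c + q_d$ together with $q_a^2 + q_a q_b + q_b^2 = q_c^2 + q_c q_d + q_d^2$, hence equal elementary symmetric functions, hence $\{q_a, q_b\} = \{q_c, q_d\}$, which is impossible since the four coordinates are distinct. Two distinct lines in $\PP^2$ meet in a single point, so $D_f$ and $D_g$ are proportional and $f, g$ are left associates.

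The main obstacle is exactly this disjoint case of (2): unlike a full common fiber, two disjoint pairs impose only two conditions on the three-coefficient ``ramification conic'' of a cubic, so the conclusion hinges on those two conditions being independent. The content of the argument is that independence is automatic in configuration space — the two linear conditions could fail to be independent only if the two coordinate-pairs coincided — and packaging this as the projective incidence computation above is the crux of the proof. I would take care to confirm that in the disjoint case both $f$ and $g$ are noninjective, hence of degree at least $2$, so that $D_f$ and $D_g$ are genuine nonzero points of $\PP^2$.
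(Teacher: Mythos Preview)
Your proof is correct and follows essentially the same route as the paper's: case (3) via a common set of $d$ roots and unique factorization, case (1) as a special case of (3), and case (2) via the divided difference $D_h(x,y) = (h(x)-h(y))/(x-y)$, showing that two distinct coincidence pairs impose independent linear conditions on the non-constant coefficients (the paper packages this as a general lemma using vectors $\rho_d(x,y)$ rather than your projective language, and checks the same symmetric-function identity to rule out dependence). Your explicit treatment of the overlapping-pairs subcase of (2) and the degree check ensuring $(c_3:c_2:c_1)$ is a genuine point of $\PP^2$ are nice touches that the paper handles only implicitly.
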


The condition $f(x) = \ell(g(x))$ for any pair of realizations $f(x), g(x)$ is highly restrictive. In Section \ref{sec:obstructions} we identify a number of ways in which Theorem \ref{thm intro left associates} can be used to explain why $\wh{\M}_{\P,\Q,d} = \emptyset$ for $\P$ and $\Q$ satisfying one of the conditions of the theorem. Furthermore, in Section \ref{sec:conditional dimension} we show how Theorem \ref{thm intro left associates} leads to a conditional revised expected dimension for $\wh{\M}_{\P,\Q,d}$ which heuristically accounts for the majority of the deviations towards higher dimensions.

\subsubsection{Endomorphisms of symmetric configurations}

Our survey of $\wh{\M}_{\P,\Q,2}$ for portraits on 4 points allows us to answer Question \ref{quest:intro max cardinality} for $(n,d) = (4,2)$ in characteristic 0: the maximal cardinality of $\endo_2(q)$ with $q \in \conf^4(\CC)$ is 
$E_{4,2} = 28$
and is realized by the highly symmetric configuration $q = \mu_4 = (1,i,-1,-i)$ of 4th roots of unity. Note that this more than doubles the lower bound of $E_{4,2} \geq 12$ given by Theorem \ref{thm intro 2 image}. This suggests that the sequence of configurations $q = \mu_n$ of $n$th roots of unity may be good candidates for understanding the extremal behavior of $\endo(q)$. Table~\ref{tab:rou endos intro} lists the number of degree-$d$ endomorphisms of $\mu_n$ for $3 \leq n \leq 8$ and $0 \leq d < n$.

\begin{table}[h]
    \caption{The cardinality of $\endo_d(\mu_n)$ for $3 \le n \le d$ and $0 \le d \le n - 1$}
    \label{tab:rou endos intro}
    \centering
    \begin{tabular}{c||ccccccccc}
    \diagbox[height=20pt]{$n$}{$d$} & $0$ & $1$ & $2$  & $3$   & $4$    & $5$     & $6$      & $7$\\\hline
    $3$ & $3$ & $3$ & $21$ &       &        &         &          &\\
    $4$ & $4$ & $4$ & $28$ & $220$ &        &         &          &\\
    $5$ & $5$ & $5$ & $5$  & $105$ & $3005$ &         &          &\\
    $6$ & $6$ & $6$ & $6$  & $30$  & $1992$ & $44616$ &          &\\
    $7$ & $7$ & $7$ & $7$  & $7$   & $105$  & $4907$  & $818503$ &\\
    $8$ & $8$ & $8$ & $8$  & $8$   & $280$  & $2968$  & $186840$ & $16587096$
    \end{tabular}
\end{table}

Note that for each degree $d$ the $n$ monomials $\zeta_n^k x^d$ belong to $\endo_d(\mu_n)$. In Section~\ref{sec:endos} we prove that, as suggested by the table, these are the only endomorphisms of $\mu_n$ with degree less than $n/2$. For convenience, we also reproduce this table in Section~\ref{sec:endos} as Table~\ref{tab:rou endos}.

\begin{thm}
\label{thm intro rou endos}
If $n > 2d \geq 1$, then the only degree-$d$ polynomial endomorphisms of $\mu_n$ in $\CC[x]$ are of the form $\zeta_n^k x^d$ for some $k\geq 0$.
\end{thm}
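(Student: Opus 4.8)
The plan is to convert the $n$ separate unit-modulus conditions $|f(\zeta_n^k)| = 1$ (which hold because $f(\zeta_n^k) \in \mu_n$ lies on the unit circle) into a single polynomial identity $f(x)\,f^*(x) = x^d$, where $f^*$ is the conjugate-reciprocal polynomial of $f$, and then read off the conclusion from unique factorization in $\CC[x]$.

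Write $f(x) = \sum_{j=0}^d a_j x^j$, let $\overline f(x) := \sum_{j=0}^d \overline{a_j}\,x^j$ denote the polynomial obtained by conjugating the coefficients, and set $f^*(x) := x^d\,\overline f(1/x) = \sum_{j=0}^d \overline{a_j}\,x^{d-j}$, a polynomial of degree at most $d$. Since each $\zeta_n^k$ has absolute value $1$, we have $\overline{\zeta_n^{\,k}} = \zeta_n^{-k}$ and hence $\overline{f(\zeta_n^k)} = \overline f(\zeta_n^{-k})$; combining this with $|f(\zeta_n^k)|=1$ gives, for $F(x) := f(x)\,f^*(x)$,
\[
    F(\zeta_n^k) = f(\zeta_n^k)\,\zeta_n^{dk}\,\overline f(\zeta_n^{-k}) = \zeta_n^{dk}\,f(\zeta_n^k)\,\overline{f(\zeta_n^k)} = \zeta_n^{dk}
\]
for every $k$. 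Thus the polynomial $F(x) - x^d$ vanishes at all $n$ distinct roots of unity $\zeta_n^0,\ldots,\zeta_n^{n-1}$.

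The crucial step—and the exact point at which the hypothesis $n > 2d$ is used—is a degree count: $F = f\,f^*$ has degree at most $2d$, so $F(x)-x^d$ has degree at most $2d < n$ while possessing $n$ distinct roots, and a nonzero polynomial of degree less than $n$ cannot have $n$ roots. Therefore $F(x) = x^d$ identically. Since $\CC[x]$ is a UFD and $x$ is prime, the factorization $f(x)\,f^*(x) = x^d$ forces $f$ to be a monomial $a_d x^a$; as $\deg f = d$ we get $a = d$, so $f(x) = a_d x^d$, and then $f^*(x) = \overline{a_d}$, whence $|a_d|^2 = 1$. Finally, evaluating at the point $1 \in \mu_n$ shows $a_d = f(1) \in \mu_n$, i.e.\ $a_d = \zeta_n^k$ for some $k$, giving $f(x) = \zeta_n^k x^d$.

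I expect the main obstacle to be conceptual rather than computational: the key move is recognizing that the scattered conditions $|f(\zeta_n^k)|=1$ should be repackaged, via the conjugate-reciprocal polynomial, into one identity whose degree ($\le 2d$) is controlled by the hypothesis. Once $F(x)=x^d$ is established the remainder is immediate, and this framing also makes the sharpness transparent: at $n = 2d$ the difference $F(x)-x^d$ may have degree exactly $n$ and need not vanish, which is consistent with the larger value of $\lvert\endo_2(\mu_4)\rvert$ recorded in Table~\ref{tab:rou endos intro}.
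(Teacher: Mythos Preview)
Your proof is correct and arrives at essentially the same endpoint as the paper's---the identity $f(x)\,\overline f(1/x)=1$, equivalently $f(x)f^*(x)=x^d$---but reaches it by a more direct and elementary route. The paper conjugates by the M\"obius map $\ell(x)=i\frac{x-i}{x+i}$ sending the unit circle to $\widehat{\RR}$, homogenizes the conjugated map, and bounds the number of real-image points by the zeros of a degree-$2d$ form; this reproduces a result of Cargo--Schneider about arbitrary points on the unit circle, not just roots of unity. Your argument instead exploits the algebraic structure of $\mu_n$: packaging the conditions $|f(\zeta_n^k)|=1$ into the single polynomial $F(x)-x^d$ of degree $\le 2d$ with $n>2d$ roots is cleaner and avoids both the M\"obius conjugation and the passage to projective coordinates. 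The paper's approach buys a slightly stronger intermediate statement (any $f$ sending more than $2d$ circle points to the circle is a unimodular monomial), while yours gives a self-contained proof of exactly the stated theorem with no external input.
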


Theorem \ref{thm intro rou endos} is an immediate consequence of a result of Cargo and Schneider \cite[Theorem 1]{cargo/schneider}; however, we provide a self-contained proof in Section \ref{sec:endos}.

\subsection{Related work}

Questions \ref{quest:intro semigroup} and \ref{quest:intro max cardinality} relate to several conjectures on preperiodic points in arithmetic dynamics. We note two examples below.

\subsubsection{Uniform boundedness conjecture}\label{sec:ubc}

The Morton-Silverman uniform boundedness conjecture \cite[p. 100]{morton/silverman} asserts (in part) that for any number field $K$ of degree $e$ over $\QQ$ and any polynomial $f(x) \in K[x]$ with degree $d$, the number of $K$-rational preperiodic points of $f(x)$ is uniformly bounded by a constant $B_{d,e}$ depending only on $d$ and $e$. See \cite[\textsection 3.3]{Silverman_ADS} for the precise statement of the uniform boundedness conjecture and a list of further references.

The uniform boundedness conjecture is equivalent to the statement that $\endo_d(q) = \emptyset$ for any configuration $q$ consisting of more than $B_{d,e}$ points in $\AA^1(K)$. We now briefly justify this equivalence, which relies on the fact that if $q_1,\ldots,q_n$ is the full set of $K$-rational preperiodic points for a polynomial $f(x) \in K[x]$ of degree $d$, then for $q = (q_1,\ldots,q_n)$ we have $f \in \endo_d(q)$. In fact, from this observation it follows immediately that if $\endo_d(q) = \emptyset$ for all configurations $q$ of more than $B_{d,e}$ points in $\AA^1(K)$, then no degree-$d$ polynomial can have more than $B_{d,e}$ $K$-rational preperiodic points.

Now suppose the uniform boundedness conjecture holds, and let $q$ be a configuration of points in $\AA^1(K)$ with more than $B_{d,e}$ points. Since $B_{d,e}$ depends on $d$, we may as well assume\footnote{In fact, if $B_{d,e}$ exists, it is not too difficult to show that the inequality $B_{d,e} \ge d$ must necessarily hold. For the purposes of this discussion, though, we do not need to prove this.} that $B_{d,e} \ge d$. Suppose for contradiction that there exists $f(x) \in \endo_d(q)$. Then $q$ is a set of preperiodic points for $f$, so it remains to show that $f$ has coefficients in $K$, which would then contradict the uniform boundedness conjecture. But the fact that $f$ has coefficients in $K$ follows immediately from Lagrange interpolation (see also the proof of Proposition~\ref{prop portrait space is a variety}) since $q$ has more than $d = \deg f$ points.

\subsubsection{Common preperiodic point conjecture}
DeMarco, Krieger, and Ye \cite{DKY} conjecture that for any degree $d$ there exists a uniform bound $C_d$ such that if $f(x), g(x) \in \CC[x]$ are polynomials of degree $d$, then either
\[
    |\preper(f) \cap \preper(g)| \leq C_d \quad\text{or}\quad
    \preper(f) = \preper(g),
\]
where $\preper(f)$ denotes the set of all complex preperiodic points for $f$. Hence, if $q$ is a configuration of more than $C_d$ complex points, then this conjecture implies that all the elements of $\endo_d(q)$ have identical sets of preperiodic points.

\subsubsection{Dynamical moduli spaces}
The portrait moduli spaces $\M_{\P,d}$ are closely related, but not identical, to dynamical moduli spaces that have been studied since at least the 1980s. Typically one begins with a family $\F$ of endomorphisms of the projective line $\PP^1$ (e.g. degree-$d$ rational functions, degree-$d$ polynomials, or degree-$d$ polynomials with a single critical point,) and a portrait $\P$ on $[n]$, then constructs the space
\[
    \F[\P] := \{(f, q_1,\ldots,q_n) : f \in \F,\ f(q_i) = q_{\P(i)} \text{ for all } i, \text{ and } q_i \ne q_j \text{ for } i \ne j\}.
\]
If $G \subseteq \aut(\PP^1) \cong \PGL_2$ is a subgroup stabilizing $\F$ under conjugation, then $\phi \in G$ acts on $\F[\P]$ by
\[
    (f,q_1,\ldots, q_n)^\phi = (\phi\circ f\circ \phi^{-1}, \phi(q_1), \ldots, \phi(q_n)).
\]
Then we get a moduli space of dynamical systems of type $\F$ with \emph{level structure} $\P$ by taking the quotient $\M[\P] := \F[\P]/G$.

Our spaces $\conf_{\P,d}$ and $\M_{\P,d}$ are the projections of $\F[\P]$ and $\M[\P]$, respectively, onto the $q$-coordinates, where we have taken $\F$ to be the family of degree-$d$ polynomials and $G = \aff_1$ the group of affine linear transformations. The spaces $\conf_{\P,d}$ are more natural than $\F[\P]$ in the context of Questions \ref{quest:intro semigroup} and \ref{quest:intro max cardinality}, as configurations $q$ with exceptional endomorphism semigroups arise as points of intersection of $\conf_{\P,d}$ for several $\P$ and $d$. Furthermore, when $n > d$, Lagrange interpolation implies that the degree-$d$ polynomial $f(x)$ witnessing $q \in \conf_{\P,d}$ is uniquely determined by $q$, hence no information is lost in the projection $\pi: \F[\P] \rightarrow \conf_{\P,d}$ (see Theorem~\ref{thm intro single portrait}).

There is a large and growing literature on dynamical moduli spaces. We recommend the article \cite{doyle/silverman}, which constructs the parameter spaces $\endo_d^N[\P]$ and moduli spaces $\M_d^N[\P] := \endo_d^N[\P]/\PGL_{N+1}$, where $\endo_d^N$ denotes the family of all degree-$d$ endomorphisms of $\PP^N$, and develops the basic theory of such spaces. See \cite[\textsection 2]{doyle/silverman} for a brief survey of prior work on dynamical moduli spaces and \cite[Theorem 17.5]{doyle/silverman} for a statement showing that the general Morton-Silverman uniform boundedness conjecture can be rephrased in terms of $K$-rational points on dynamical moduli spaces.

\subsection{Organization}

The remainder of the paper is divided into three sections. The parameter and moduli spaces $\conf_{\P,d}$ and $\M_{\P,d}$ are constructed in Section \ref{sec:realization space}. Theorem \ref{thm intro single portrait} is proved as a combination of Proposition \ref{prop portrait space is a variety} and Corollary \ref{cor:Mpd_dim}. In Section \ref{sec:data} we study intersections of the portrait realization spaces and discuss the results of computational surveys in low degrees. Theorem \ref{thm intro 2 image} appears in parts as Theorem \ref{thm 2 image} and Proposition \ref{prop two image obstruction}; Theorem \ref{thm:intro URS} is proved as Theorem \ref{thm polynomial URS}; and Theorem \ref{thm intro left associates} is proved as Theorem \ref{thm left associate realizations}. Finally, in Section \ref{sec:endos} we prove that $E_{4,2} = 28$ (Proposition \ref{prop:4th_rou}) and prove Theorem \ref{thm intro rou endos} as Theorem \ref{thm rou endos}.

\subsection*{Acknowledgments}
This material is based upon work supported by NSF grant DMS-1439786 while the authors were in residence at the Institute for Computational and Experimental Research in Mathematics (ICERM) in Providence, RI, during the Summer@ICERM 2019 program on Computational Arithmetic Dynamics. We thank ICERM and Brown University for their hospitality during the eight-week program. We would like to thank Ben Hutz, Bianca Thompson, and Adam Towsley who, along with the second author, organized the Summer@ICERM program. We thank Grayson Jorgenson for helpful discussions while this article was being prepared, and we thank Paul Fili for lending us computing resources. John Doyle was partially supported by NSF grant DMS-2112697, Trevor Hyde was partially supported by the NSF Postdoctoral Research Fellowship DMS-2002176 and the Jump Trading Mathlab Research Fund, Colby Kelln was partially supported by an NSF Graduate Research Fellowship under grant DGE-1650441, and Max Weinreich was supported by an NSF Graduate Research Fellowship under grant DGE-2040433.

\section{Portrait realization spaces and moduli spaces}
\label{sec:realization space}

Let $\P$ be a portrait on $n$ points, that is, $\P : [n] \rightarrow [n]$ is a set-theoretic endomorphism of $[n] := \{1,2,\ldots, n\}$. Recall from the introduction that $(q_1,\ldots,q_n) \in \conf^n$ is a {\it degree-$d$ realization} of $\P$ if there exists a degree-$d$ polynomial $f(x)$ such that $f(q_i) = q_{\P(i)}$ for all $1 \le i \le n$. For the purposes of this paper, we consider the zero polynomial to have degree $0$.

\begin{example}\label{ex:portrait_example}
It is convenient to represent portraits on $[n]$ as directed graphs with vertices labeled $1,2,\ldots,n$. Figure~\ref{fig:portrait_example} illustrates three such diagrams: On the left is the portrait $\P$ that maps $1$, $2$, $3$, and $4$ to $1$, $1$, $2$, and $4$, respectively. In the middle is the portrait $\Q$ that maps $1$, $2$, $3$, and $4$ to $1$, $3$, $3$, and $1$, respectively. On the right, is the pair of portraits $\{\P,\Q\}$: $\P$ is drawn with solid red arrows and $\Q$ is drawn with dashed black arrows.

To conclude this example, note that $q = (q_1,q_2,q_3,q_4) = (0, 1, 2, 3)$ is a degree-$2$ realization of the pair of portraits $\{\P,\Q\}$. Indeed, if $f(x) := \frac{1}{2}x(x - 1)$ and $g(x) := -x(x - 3)$, then
    \begin{alignat*}{3}
    f(q_1) &= f(0) = 0 = q_1 \qquad\qquad g(q_1) &&= g(0) = 0 = q_1\\
    f(q_2) &= f(1) = 0 = q_1 \qquad\qquad g(q_2) &&= g(1) = 2 = q_3\\
    f(q_3) &= f(2) = 1 = q_2 \qquad\qquad g(q_3) &&= g(2) = 2 = q_3\\
    f(q_4) &= f(3) = 3 = q_4 \qquad\qquad g(q_4) &&= g(3) = 0 = q_1,
    \end{alignat*}
so $q$ realizes $\P$ and $\Q$ via the polynomials $f$ and $g$, respectively.
    \begin{figure}[h]
    \centering
    \begin{tabular}{|c|c|c|}
    \hline
        \begin{tikzpicture}[shorten >= -2pt, scale=0.75]
        \node (0) at (0, 2) {$1$};
        \node (1) at (0, 0) {$2$};
        \node (2) at (2, 0) {$3$};
        \node (3) at (2, 2) {$4$};
        \draw[-{Latex[length=1.4mm,width=2mm]}, thick] (0) edge [out=90, in=180, looseness=7] (0);
        \draw[-{Latex[length=1.4mm,width=2mm]}, thick] (1) edge (0);
        \draw[-{Latex[length=1.4mm,width=2mm]}, thick] (2) edge (1);
        \draw[-{Latex[length=1.4mm,width=2mm]}, thick] (3) edge [out=0, in=90, looseness=7] (3);
        %
        \draw[-{Latex[length=1.4mm,width=2mm]}, thick, white] (2) edge [out=270, in=0, looseness=7] (2);
        \end{tikzpicture}
        &
        \begin{tikzpicture}[shorten >= -2pt, scale=0.75]
        \node (0) at (0, 2) {$1$};
        \node (1) at (0, 0) {$2$};
        \node (2) at (2, 0) {$3$};
        \node (3) at (2, 2) {$4$};
        \draw[-{Latex[length=1.4mm,width=2mm]}, thick] (0) edge [out=90, in=180, looseness=7] (0);
        \draw[-{Latex[length=1.4mm,width=2mm]}, thick] (1) edge (2);
        \draw[-{Latex[length=1.4mm,width=2mm]}, thick] (2) edge [out=270, in=0, looseness=7] (2);
        \draw[-{Latex[length=1.4mm,width=2mm]}, thick] (3) edge (0);
        \end{tikzpicture}
        &
        \begin{tikzpicture}[shorten >= -2pt, scale=0.75]
        \node (0) at (0, 2) {$1$};
        \node (1) at (0, 0) {$2$};
        \node (2) at (2, 0) {$3$};
        \node (3) at (2, 2) {$4$};
        \draw[-{Latex[length=1.4mm,width=2mm]}, thick, red] (0) edge [out=150, in=220, looseness=6] (0);
        \draw[-{Latex[length=1.4mm,width=2mm]}, thick, red] (1) edge (0);
        \draw[-{Latex[length=1.4mm,width=2mm]}, thick, red] (2) edge [out=160, in=20] (1);
        \draw[-{Latex[length=1.4mm,width=2mm]}, thick, red] (3) edge [out=0, in=90, looseness=7] (3);
        \draw[-{Latex[length=1.4mm,width=2mm]}, thick, dashed] (0) edge [out=60, in=130, looseness=6] (0);
        \draw[-{Latex[length=1.4mm,width=2mm]}, thick, dashed] (1) edge [out=340, in=200] (2);
        \draw[-{Latex[length=1.4mm,width=2mm]}, thick, dashed] (2) edge [out=270, in=0, looseness=7] (2);
        \draw[-{Latex[length=1.4mm,width=2mm]}, thick, dashed] (3) edge (0);
        \end{tikzpicture}\\
    \hline
    \end{tabular}
    \caption{An illustration of $\P$, $\Q$, and $\{\P,\Q\}$, respectively.}
    \label{fig:portrait_example}
\end{figure}
\end{example}

We are interested in the \emph{portrait realization spaces}
\begin{align*}
    \conf_{\P,d} &:= \{q \in \conf^n : \text{$q$ is a degree-$d$ realization of $\P$}\},\\
    \wh{\conf}_{\P,d} &:= \bigcup_{0 \le e \le d} \conf_{\P,e}
        = \{q \in \conf^n : \text{$q$ is a degree-at-most-$d$ realization of $\P$}\}.
\end{align*}
In this section, we study portrait realization spaces, culminating in a proof of Theorem~\ref{thm intro single portrait}.

\subsection{\texorpdfstring{$\conf_{\P,d}$}{Conf{P,d}} as a variety } 
The following lemma is used in the proofs of Propositions \ref{prop portrait space is a variety} and \ref{prop admissible}.

\begin{lemma}
\label{lem:vandermonde}
Let $(q_1, q_2,\ldots,q_n) \in \conf^n$ be a configuration of $n$ distinct points, and let $d \ge n$. Then for any portrait $\P: [n] \rightarrow [n]$, there exists a monic polynomial $f(x)$ of degree $d$ such that $f(q_i) = q_{\P(i)}$ for all $1 \leq i \leq n$.
\end{lemma}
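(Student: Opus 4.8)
The plan is to prescribe the values $f(q_i) = q_{\P(i)}$ at the $n$ distinct nodes $q_1, \dots, q_n$ and show that a monic degree-$d$ polynomial achieving these values always exists when $d \geq n$. The key point is that there are more coefficients to choose ($d$ of them, after fixing the leading coefficient to be $1$) than there are interpolation constraints ($n$ of them), so the system should be solvable; the content is checking that the relevant coefficient matrix is nonsingular.

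Concretely, I would write $f(x) = x^d + c_{d-1}x^{d-1} + \dots + c_1 x + c_0$ and view the conditions $f(q_i) = q_{\P(i)}$ as a linear system in the unknown coefficients $c_0, \dots, c_{d-1}$. Moving the known leading term to the right-hand side, the $i$th equation reads $\sum_{k=0}^{d-1} c_k q_i^k = q_{\P(i)} - q_i^d$. First I would isolate the subsystem coming from the lowest $n$ powers: the coefficient matrix of $c_0, \dots, c_{n-1}$ across the $n$ equations is the Vandermonde matrix $V = (q_i^{\,k})_{1 \le i \le n,\, 0 \le k \le n-1}$, whose determinant $\prod_{i<j}(q_j - q_i)$ is nonzero precisely because the $q_i$ are distinct. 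I would then freely set all the higher coefficients $c_n, \dots, c_{d-1}$ (for instance to $0$, so that we only genuinely need $c_0, \dots, c_{n-1}$ together with the fixed leading term) and solve the resulting $n \times n$ invertible system for $c_0, \dots, c_{n-1}$. This produces a monic polynomial of degree exactly $d$ (the leading coefficient is $1$ by construction, independent of the lower-order choices) satisfying all the required interpolation conditions.

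The main point to be careful about, rather than a genuine obstacle, is ensuring the degree is \emph{exactly} $d$ and that the leading coefficient is monic as claimed: since we have fixed the coefficient of $x^d$ to equal $1$ before solving, and solved only for strictly lower-order terms, the degree cannot drop and monicity is automatic. One subtlety is that the lemma does not assume $\P$ is injective or surjective, but this causes no trouble, because distinctness of the interpolation \emph{nodes} $q_1, \dots, q_n$ (which is exactly the hypothesis $q \in \conf^n$) is all that the Vandermonde argument needs; the target values $q_{\P(i)}$ may repeat without affecting solvability. Thus the whole argument reduces to the classical nonvanishing of the Vandermonde determinant together with a dimension count of coefficients versus constraints.
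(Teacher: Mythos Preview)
Your proposal is correct and follows essentially the same approach as the paper: write a monic degree-$d$ polynomial with undetermined lower coefficients, reduce the interpolation conditions to a linear system whose coefficient matrix is Vandermonde (hence of full rank since the $q_i$ are distinct), and conclude solvability from $d \ge n$. The only cosmetic difference is that the paper works with the full $n \times d$ Vandermonde matrix and invokes its full rank, whereas you set the extra coefficients $c_n,\dots,c_{d-1}$ to zero and solve the resulting invertible $n \times n$ system.
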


\begin{proof}
The polynomial
$
    f(x) = x^d + a_{d-1}x^{d-1} + \ldots + a_1x + a_0
$
satisfies the conditions $f(q_i) = q_{\P(i)}$ for all $i = 1,\ldots,n$ if and only if $(a_{d-1},\ldots,a_1,a_0)$ is a solution to the system of equations
    \[
    q_i^d + a_{d-1}q_i^{d-1} + \cdots + a_1q_1 + a_0 = q_{\P(i)} \quad\text{for $i = 1,\ldots,n$}.
    \]
This can be rewritten as
    \[
        \begin{pmatrix}
            q_1^{d-1} & \cdots & q_1 & 1\\
            \vdots &  & \vdots & \vdots\\
            q_n^{d-1} & \cdots & q_n & 1
        \end{pmatrix}
        \begin{pmatrix}
            a_{d-1}\\
            \vdots\\
            a_0
        \end{pmatrix}
        =
        \begin{pmatrix}
            q_{\P(1)} - q_1^d\\
            \vdots\\
            q_{\P(n)} - q_n^d
        \end{pmatrix}.
    \]
The matrix on the left-hand side is an $n \times d$ Vandermonde matrix, hence has full rank since $q_1,\ldots,q_n$ are distinct. Since we assumed $d \ge n$, it follows that this system (which has rank $n$) has solutions, and therefore there exists a monic degree-$d$ polynomial $f$ satisfying the desired conditions.
\end{proof}

\begin{prop}
\label{prop portrait space is a variety}
$\confhat_{\P,d}$ is a Zariski closed subset of $\conf^n$, and $\conf_{\P,d}$ is a Zariski open subset of $\confhat_{\P,d}$ for any portrait $\P:[n] \rightarrow [n]$ be a portrait and any $d\geq 0$.
\end{prop}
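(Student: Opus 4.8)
The plan is to reformulate membership in $\confhat_{\P,d}$ as the consistency of a linear system and then read off both topological claims from elementary linear algebra over the coordinate ring of $\conf^n$. Fix $q = (q_1,\dots,q_n) \in \conf^n$ and write a candidate realization as $f(x) = a_d x^d + \dots + a_1 x + a_0$. The conditions $f(q_i) = q_{\P(i)}$ become the linear system $M(q)\,\mathbf a = \mathbf b(q)$ in the coefficient vector $\mathbf a = (a_d,\dots,a_0)$, where $M(q)$ is the $n\times(d+1)$ matrix with $i$th row $(q_i^d,\dots,q_i,1)$ and $\mathbf b(q) = (q_{\P(1)},\dots,q_{\P(n)})^{\top}$. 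Thus $q \in \confhat_{\P,d}$ exactly when this system is consistent. Since the $q_i$ are distinct on $\conf^n$, the Vandermonde matrix $M(q)$ has full rank $r := \min\{n, d+1\}$ at every point of $\conf^n$ (a suitable $r\times r$ Vandermonde submatrix is invertible). By the Rouch\'e--Capelli criterion, consistency is equivalent to $\operatorname{rank}[\,M(q)\mid \mathbf b(q)\,] = r$; as the augmented matrix always has rank at least $r$, this is the closed condition that every $(r+1)\times(r+1)$ minor of $[\,M(q)\mid\mathbf b(q)\,]$ vanishes. These minors are polynomials in $q_1,\dots,q_n$, so their common zero locus is Zariski closed in $\AA^n$, and its intersection with $\conf^n$ is exactly $\confhat_{\P,d}$, which is therefore closed in $\conf^n$. (When $d \ge n-1$ we have $r = n$, an $n$-row matrix has no minors of size $r+1$, the condition is vacuous, and $\confhat_{\P,d} = \conf^n$.)

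For the openness of $\conf_{\P,d}$ in $\confhat_{\P,d}$ I would split on the size of $d$. If $d \ge n$, Lemma \ref{lem:vandermonde} produces a monic degree-$d$ realization for every $q \in \conf^n$, so $\conf_{\P,d} = \confhat_{\P,d} = \conf^n$ and openness is immediate. If $d < n$, the essential input is Lagrange uniqueness: any two polynomials of degree less than $n$ that agree at the $n$ distinct points $q_i$ coincide, so a configuration in $\confhat_{\P,d}$ has a unique realizing polynomial of degree $\le d$, which therefore has a well-defined degree. Consequently $\confhat_{\P,d}$ is the disjoint union $\bigsqcup_{e=0}^{d}\conf_{\P,e}$, and a configuration lies in $\conf_{\P,d}$ precisely when its unique realizer fails to have degree $\le d-1$. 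Hence $\conf_{\P,d} = \confhat_{\P,d}\setminus\confhat_{\P,d-1}$, and since $\confhat_{\P,d-1}$ is Zariski closed by the first part and contained in $\confhat_{\P,d}$, its complement $\conf_{\P,d}$ is Zariski open in $\confhat_{\P,d}$; the boundary case $d=0$ is handled by the convention $\confhat_{\P,-1}=\emptyset$.

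I expect the only genuinely delicate point to be the openness step, specifically the passage from ``realizable in degree exactly $d$'' to the clean set-theoretic identity $\conf_{\P,d} = \confhat_{\P,d}\setminus\confhat_{\P,d-1}$. This identity fails without the uniqueness phenomenon, so the crux is recording that for $d < n$ the interpolating polynomial is unique (making the degree strata disjoint) while for $d \ge n$ one falls back on Lemma \ref{lem:vandermonde}. The closedness of $\confhat_{\P,d}$ is then a formal consequence of consistency being a minor-vanishing condition, and needs only that $M(q)$ have constant rank on $\conf^n$, which the distinctness of the $q_i$ guarantees.
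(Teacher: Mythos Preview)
Your argument is correct and rests on the same idea as the paper's: membership in $\confhat_{\P,d}$ is governed by the linear algebra of interpolation at the $n$ distinct nodes $q_1,\dots,q_n$, and the exact-degree locus is the complement of the next stratum down.

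The only difference is packaging. The paper writes out the Lagrange interpolant explicitly,
\[
    f(x)=\sum_{i=1}^n x_{\P(i)}\prod_{j\ne i}\frac{x-x_j}{x_i-x_j}=b_{n-1}x^{n-1}+\cdots+b_0,
\]
so that the $b_k$ are visibly regular functions on $\conf^n$; then $\confhat_{\P,d}=\{b_{n-1}=\cdots=b_{d+1}=0\}$ and $\conf_{\P,d}=\{b_d\ne 0\}\cap\confhat_{\P,d}$ in one stroke. You instead encode consistency via the Rouch\'e--Capelli minor conditions on the augmented Vandermonde matrix, and obtain openness from $\conf_{\P,d}=\confhat_{\P,d}\setminus\confhat_{\P,d-1}$ after invoking uniqueness of the interpolant. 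These are equivalent descriptions of the same closed set: over the Vandermonde-localized coordinate ring your $(r+1)\times(r+1)$ minors generate the same ideal as the top coefficients $b_{d+1},\dots,b_{n-1}$. The paper's version has the small practical advantage of producing explicit defining equations (used later, e.g.\ in Example~\ref{ex:exceptional_quad} and the B\'ezout remark in \S\ref{sec:0-dim}), whereas your formulation makes the role of constant Vandermonde rank on $\conf^n$ more transparent.
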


\begin{proof}
If $d \ge n$, then it follows from Lemma~\ref{lem:vandermonde} that for any configuration $q \in \conf^n$, there exists a {\it monic} degree-$d$ polynomial $f(x)$ such that $f(q_i) = q_{\P(i)}$ for all $i \in [n]$. Therefore, in this case
\[
    \conf_{\P,d} = \confhat_{\P,d} = \conf^n,
\]
so the conclusion holds trivially.

Now, suppose $d \le n - 1$.
The configuration space $\conf^n$ is, by definition, the complement of the hyperplane arrangement $\{x_i = x_j : i \neq j\}$ in $\AA^n$. Let $R_n := \ZZ[x_i, (x_j - x_k)^{-1} : 1 \leq i, j, k \leq n, j \neq k]$ be the coordinate ring of $\conf^n$. Lagrange interpolation tells us that
\[
    f(x) := \sum_{i=1}^n x_{\P(i)}\prod_{\substack{j = 1\\ j \neq i}}^n \frac{x - x_j}{x_i - x_j} = b_{n-1}x^{n-1} + b_{n-2}x^{n-2} + \cdots + b_1 x + b_0 \in R_n[x]
\]
is the unique polynomial of degree at most $n - 1$ with coefficients in $R_n$ which realizes the portrait $\P$ on the indeterminates $x_i$. Thus, a configuration $q = (q_1, q_2, \ldots, q_n)$ is a degree-at-most-$d$ realization of $\P$ if and only if $b_{n-1}(q) = b_{n-2}(q) = \cdots = b_{d+1}(q) = 0$. In other words, $\confhat_{\P,d}$ is the Zariski closed subset of $\conf^n$ defined by the vanishing of $b_k$ with $d < k \leq n - 1$. Moreover, the space $\conf_{\P,d}$ of degree-$d$ realizations of $\P$ is the open subset of $\confhat_{\P,d}$ defined by $b_d(q) \ne 0$.
\end{proof}

If $\P: [n] \rightarrow [n]$ is a portrait and $\sigma$ is a permutation of $[n]$, then we let $\P^\sigma := \sigma^{-1} \circ \P \circ \sigma$ denote the conjugate of $\P$ by $\sigma$, which amounts to the relabelling of $\P$ induced by $\sigma$. We end this section by showing that relabelled portraits have isomorphic realization spaces. 

\begin{prop}\label{prop:conjugation_isomorphism}
Let $\P : [n] \to [n]$ be a portrait and let $\sigma$ be a permutation of $[n]$. The map
    \begin{equation}\label{eq:perm_isom}
        \begin{split}
        \Phi_\sigma : \conf^n &\longrightarrow \conf^n\\
        (q_1,\ldots,q_n) &\longmapsto (q_{\sigma(1)},\ldots,q_{\sigma(n)})
        \end{split}
    \end{equation}
induces isomorphisms $\conf_{\P,d} \overset{\sim}{\longrightarrow} \conf_{\P^\sigma,d}$ and $\confhat_{\P,d} \overset{\sim}{\longrightarrow} \confhat_{\P^\sigma,d}$ for all $d \ge 0$.
\end{prop}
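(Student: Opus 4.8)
The plan is to exhibit $\Phi_\sigma$ as an automorphism of the ambient space $\conf^n$ that carries the closed locus $\confhat_{\P,d}$ (resp.\ the open locus $\conf_{\P,d}$) isomorphically onto $\confhat_{\P^\sigma,d}$ (resp.\ $\conf_{\P^\sigma,d}$). First I would record that $\Phi_\sigma$ is an isomorphism of $\conf^n$: a direct index computation gives $\Phi_\tau \circ \Phi_\sigma = \Phi_{\sigma\circ\tau}$, so $\Phi_\sigma$ is invertible with inverse $\Phi_{\sigma^{-1}}$, and it is an algebraic automorphism of $\conf^n$ (it merely permutes coordinates, so it preserves the condition that the coordinates be distinct, hence preserves $\conf^n$). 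Since an automorphism of $\conf^n$ restricts to an isomorphism between any two locally closed subsets it carries onto one another, it suffices to prove the set-theoretic identities $\Phi_\sigma(\conf_{\P,d}) = \conf_{\P^\sigma,d}$ and $\Phi_\sigma(\confhat_{\P,d}) = \confhat_{\P^\sigma,d}$.

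The heart of the argument is that the witnessing polynomial need not change. Fix $q = (q_1,\ldots,q_n) \in \conf_{\P,d}$ with witness $f$, so $f(q_i) = q_{\P(i)}$ for all $i$, and set $q' := \Phi_\sigma(q)$, so that $q'_i = q_{\sigma(i)}$. Using $\P^\sigma = \sigma^{-1}\circ \P \circ \sigma$, I would then compute
\[
    f(q'_i) = f(q_{\sigma(i)}) = q_{\P(\sigma(i))} = q_{\sigma(\P^\sigma(i))} = q'_{\P^\sigma(i)}
\]
for every $i \in [n]$, where the third equality is just the relation $\P\circ\sigma = \sigma \circ \P^\sigma$. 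Hence the same polynomial $f$, which has the same degree, witnesses $q' \in \conf_{\P^\sigma,d}$, proving $\Phi_\sigma(\conf_{\P,d}) \subseteq \conf_{\P^\sigma,d}$. Because the degree of $f$ is unchanged, the identical computation with the condition ``$\deg f = d$'' relaxed to ``$\deg f \le d$'' gives $\Phi_\sigma(\confhat_{\P,d}) \subseteq \confhat_{\P^\sigma,d}$.

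To finish I would apply this same inclusion to the pair $(\P^\sigma, \sigma^{-1})$ in place of $(\P,\sigma)$. Since $(\P^\sigma)^{\sigma^{-1}} = \sigma\circ\sigma^{-1}\circ\P\circ\sigma\circ\sigma^{-1} = \P$, this yields $\Phi_{\sigma^{-1}}(\conf_{\P^\sigma,d}) \subseteq \conf_{\P,d}$; composing with $\Phi_\sigma$ and using $\Phi_{\sigma^{-1}} = \Phi_\sigma^{-1}$ upgrades both inclusions to equalities, and likewise for the hatted spaces. The only point requiring genuine care --- and the closest thing to an obstacle --- is the bookkeeping of whether $\sigma$ or $\sigma^{-1}$ appears, i.e.\ verifying the identity $\P\circ\sigma = \sigma\circ\P^\sigma$ and tracking it consistently through both the forward map and its inverse. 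Once the convention $\P^\sigma = \sigma^{-1}\circ\P\circ\sigma$ is fixed, everything else is formal.
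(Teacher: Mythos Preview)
Your proof is correct and follows essentially the same route as the paper: both observe that $\Phi_\sigma$ is an automorphism of $\conf^n$ with inverse $\Phi_{\sigma^{-1}}$, verify via the computation $f(q'_i) = f(q_{\sigma(i)}) = q_{\P(\sigma(i))} = q_{\sigma(\P^\sigma(i))} = q'_{\P^\sigma(i)}$ that the same witnessing polynomial works for $q'$, and then obtain the reverse inclusion by symmetry using $\Phi_{\sigma^{-1}}$. Your write-up is slightly more explicit about the composition law $\Phi_\tau\circ\Phi_\sigma = \Phi_{\sigma\circ\tau}$ and about the hatted case, but there is no substantive difference.
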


\begin{proof}
The map $\Phi_\sigma$ is certainly an automorphism of $\conf^n$ (with inverse $\Phi_{\sigma^{-1}}$), so it suffices to show that for all $d \ge 0$, the image of $\conf_{\P,d}$ under $\Phi_\sigma$ is equal to $\conf_{\P^\sigma}$. Let $q = (q_1,\ldots,q_n) \in \conf_{\P,d}$, and let $q' = (q_1',\ldots,q_n') := \Phi_\sigma(q)$. Since $q \in \conf_{\P,d}$, there exists a degree-$d$ polynomial $f(x)$ such that $f(q_i) = q_{\P(i)}$ for all $i = 1,\ldots,n$. Applying $f$ to the components of $q'$ yields
\[
    f(q_i')
        = f(q_{\sigma(i)})
        = q_{\P(\sigma(i))}
        = q_{\sigma(\P^\sigma(i))}
        = q_{\P^\sigma(i)}',
\]
hence $\Phi_\sigma(q) = q' \in \conf_{\P^\sigma,d}$, as we wished to show. The reverse inclusion follows by symmetry, applying the above argument to $\Phi_\sigma^{-1} = \Phi_{\sigma^{-1}}$.
\end{proof}

\subsection{Dimension of \texorpdfstring{$\conf_{\P,d}$}{Conf{P,d}}}

We now state necessary and sufficient conditions on a portrait $\P$ for the degree-$d$ realization space $\conf_{\P,d}$ to be nonempty and, in that case, compute the dimension of $\conf_{\P,d}$. \emph{Throughout this subsection, we assume that all realization spaces and moduli spaces are defined over a field $K$ of characteristic zero.}

The most dynamically interesting situation is when $d \geq 2$, but for completeness we also briefly discuss the cases $d = 0$ and $d = 1$. Certainly for a portrait $\P$ on $[n]$ to have degree-$0$ realizations, $\P$ must be a constant portrait, and conversely any configuration in $\conf^n$ can realize a constant portrait. We now record Proposition \ref{prop:lower_degrees}, which is a consequence of the following elementary facts about degree-$1$ polynomials $\ell(x)$:

\renewcommand{\theenumi}{\roman{enumi}}
    \begin{enumerate}
        \item $\ell$ is determined by how it acts on any two points;
        \item If $\ell(x) \ne x$, then $\ell$ has at most one fixed point; and
        \item If $\ell$ has a periodic point of exact period $k > 1$, then $\ell^k(x) = x$ and $\ell^m(x) \ne x$ for all $1 \le m < k$. (In particular, every point except the unique fixed point has period equal to $k$.)
    \end{enumerate}
\renewcommand{\theenumi}{\arabic{enumi}}

\begin{prop}
\label{prop:lower_degrees}
Let $n\geq 1$ be an integer and let $\P : [n] \rightarrow [n]$ be a portrait.
\begin{enumerate}
    \item $\conf_{\P,0} \neq \emptyset$ if and only if $\P$ is constant. In this case, $\conf_{\P,0} = \conf^n$ and $\dim\conf_{\P,0} = n$.
    
    \item $\conf_{\P,1} \neq \emptyset$ if and only if the following three conditions hold,
    \begin{enumerate}
        \item $\P$ is a bijection,
        \item $\P$ is either the identity function or has at most one fixed point, and
        \item If $\P$ has a $k$-cycle for some $k > 1$, then all but at most one point in $[n]$ belongs to a $k$-cycle.
    \end{enumerate}
    In this case, if $\gamma(\P)$ denotes the number of orbits of $\P$, then $\dim \conf_{\P,1}$ is equal to $\gamma(\P)$ when $\P$ has a fixed point and is equal to $\gamma(\P) + 1$ otherwise.
\end{enumerate}
\end{prop}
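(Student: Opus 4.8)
The plan is to prove the two parts separately: part (1) is essentially immediate from the definition of degree-$0$ polynomials, while part (2) requires a case analysis organized by the cycle structure of $\P$.

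For part (1), I would note that a degree-$0$ polynomial is a constant $f(x) = c$ (including the zero polynomial, by the stated convention). The realization condition $f(q_i) = q_{\P(i)}$ then reads $q_{\P(i)} = c$ for all $i$, and since the coordinates of $q \in \conf^n$ are pairwise distinct, this forces $\P(i)$ to be independent of $i$; that is, $\P$ is constant. Conversely, if $\P$ is the constant portrait with value $m$, then $f(x) = q_m$ realizes $\P$ for every $q \in \conf^n$, so $\conf_{\P,0} = \conf^n$ and $\dim \conf_{\P,0} = n$.

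For the necessity direction of part (2), suppose $\ell(x) = ax + b$ with $a \neq 0$ realizes $\P$ on some $q$. Since $\ell$ is injective and carries the finite set $\{q_i\}$ into itself, it permutes $\{q_i\}$, and the induced permutation of indices is exactly $\P$; this gives (a). For (b), if $\P \neq \id$ then $\ell \neq \id$, so by fact (ii) $\ell$ has at most one fixed point, and since fixed points of $\P$ correspond to fixed points of $\ell$ among the $q_i$, the portrait $\P$ has at most one. For (c), a $k$-cycle of $\P$ with $k > 1$ produces a point $q_i$ of exact period $k$ under $\ell$, so fact (iii) gives $\ell^k = \id$ with every point other than the unique fixed point of $\ell$ having exact period $k$; translating back, every index except at most one lies in a $k$-cycle. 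I would emphasize the consequence that (c) forces all cycles of length $> 1$ to share a single common length $k$.

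For sufficiency and the dimension formula, I would first dispose of $\P = \id$: here $\ell = \id$ realizes $\P$ on every configuration, so $\conf_{\P,1} = \conf^n$ (or $\AA^1$ when $n = 1$), of dimension $n = \gamma(\P)$. Otherwise conditions (a)--(c) pin down the structure as at most one fixed point together with $r \geq 1$ cycles of a common length $k > 1$. Passing to $\overline{K}$, I would take $\ell(x) = a(x - p) + p$ with $a$ a primitive $k$-th root of unity, so that $\ell$ fixes $p$ and has exact order $k$, then build a realization by setting $q_{i_0} = p$ when $\P$ has a fixed point $i_0$ and spreading a chosen representative value $c_j$ around each $k$-cycle via $\ell$; all-distinctness is an open, generically satisfiable condition since $K$ is infinite. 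This identifies the realizations using a fixed $\ell$ with an open subset of $\AA^{1+r}$ with coordinates $(p, c_1, \ldots, c_r)$, where $p$ is free in both cases (it is the coordinate $q_{i_0}$ when a fixed point is present, and an auxiliary free parameter otherwise). Because $n \geq 2$ in the non-identity case forces $\ell$ to be recovered from $q$, the finitely many pieces indexed by the choice of $a$ are disjoint, so $\dim \conf_{\P,1} = 1 + r$; this equals $\gamma(\P)$ when a fixed point is present (as then $\gamma(\P) = 1 + r$) and $\gamma(\P) + 1$ when it is absent (as then $\gamma(\P) = r$). The main subtlety I anticipate is the base field: realizing a $k$-cycle requires a primitive $k$-th root of unity, which need not lie in $K$, so I would read nonemptiness geometrically (over $\overline{K}$) and invoke the invariance of dimension under base extension, noting that for $k \leq 2$, or whenever $\zeta_k \in K$, the realization is already defined over $K$.
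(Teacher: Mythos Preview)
Your proposal is correct and follows exactly the approach the paper intends: the paper does not give a detailed proof of this proposition but merely records the three elementary facts (i)--(iii) about degree-$1$ polynomials and states that the proposition is a consequence. Your write-up is a faithful expansion of that sketch, and your handling of the dimension count via the parameters $(p,c_1,\ldots,c_r)$, together with the observation that $\ell$ is recoverable from $q$ when $n\ge 2$, is the natural way to fill in the details.
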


Now suppose that $d \geq 2$. There are two natural combinatorial obstructions to $\P$ admitting degree-$d$ realizations: First, if $\P$ is to be realized by a polynomial of degree $d$, then no element of $[n]$ should have more than $d$ preimages under $\P$. Second, for a given integer $k \ge 1$, a polynomial of degree $d \ge 2$ can have at most $M_k(d)$ periodic cycles of length $k$, where
\[
    M_k(x) := \frac{1}{k}\sum_{j \mid k} \mu(k/j)x^{j}
\]
is the {\it $k$th necklace polynomial}; this follows from the fact that a point of period $k$ for a degree-$d$ polynomial $f$ is a root of the {\it $k$th dynatomic polynomial}
\[
    \Phi_k(x) := \prod_{j \mid k} \big(f^j(x) - x\big)^{\mu(k/j)},
\]
which has degree $kM_k(d)$, so that $f$ has at most $M_k(d)$ cycles of length $k$. Here $f^j$ denotes the $j$th iterate $f \circ \cdots \circ f$ of $f$.

\begin{defn}
\label{defn:admissible}
A portrait $\P: [n] \rightarrow [n]$ is \emph{admissible in degree $d$} for $d \geq 2$ if
\begin{enumerate}
    \item Every element of $[n]$ has at most $d$ preimages under $\P$, and
    \item For every integer $k \ge 1$, $\P$ has at most $M_k(d)$ periodic cycles of length $k$.
\end{enumerate}
\end{defn}

Our discussion above implies that admissibility in degree $d$ is a necessary condition for $\conf_{\P,d}$ to be nonempty. We prove that admissibility is also sufficient; one should compare this to \cite[Theorem 14.2]{doyle/silverman}.

\begin{lemma}\label{lem:admissible}
Let $d \ge 2$, and let $\P$ be a degree-$d$ admissible portrait. If $K$ is a field of characteristic $0$, then $\conf_{\P,d}(\overline{K}) \ne \emptyset$.
\end{lemma}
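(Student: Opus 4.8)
The plan is to realize $\P$ inside the dynamics of a single well-chosen degree-$d$ polynomial $f \in \overline{K}[x]$. Viewing $\P$ as a functional graph, $[n]$ decomposes into connected components, each consisting of a directed cycle with rooted trees (``tails'') attached; let $m$ denote the maximal tail length, so that $\P^{m}([n])$ is exactly the set of periodic points of $\P$. The idea is to first place the periodic indices onto genuine periodic cycles of $f$ of matching lengths, and then grow the trees outward by repeatedly passing to $f$-preimages, using admissibility to guarantee that enough preimages are always available.

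Concretely, I would work in the unicritical family $f_c(x) = x^d + c$, whose only critical value is $c$, so that every value other than $c$ has exactly $d$ distinct preimages (char $0$ is used here for separability). I would choose $c \in \overline{K}$ subject to two genericity requirements: (A) for each cycle length $k$ occurring in $\P$, the polynomial $f_c$ has the maximal number $M_k(d)$ of cycles of exact period $k$, with $\Phi_k$ separable and all of its roots of exact period $k$; and (B) $c$ is not a preperiodic point of $f_c$ with tail length $\le m-1$ landing in a cycle of one of the relevant periods. Condition (A) is a generic property of the family (its failure locus is a proper Zariski-closed, hence finite, subset of the $c$-line), while (B) excludes only finitely many values of $c$, since for each relevant $\ell \le m-1$ and period $k$ the condition forcing $f_c^{\ell}(c)$ to be periodic of period $k$ is a nontrivial polynomial equation in $c$. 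As $\overline{K}$ is infinite, some $c$ satisfies both.

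With such an $f = f_c$ fixed, the construction proceeds in two stages. First, by admissibility condition (2) the number of $k$-cycles of $\P$ is at most $M_k(d)$, so for each $k$ I can injectively assign the $k$-cycles of $\P$ to $k$-cycles of $f$ and, choosing a starting point $p$ on each, set $q_i := f^{\,t}(p)$ along the cycle; this places all periodic indices as distinct points satisfying $f(q_i) = q_{\P(i)}$. Second, I process the remaining indices in breadth-first order out from the cycles, maintaining the invariant that all placed points are distinct and satisfy $f(q_i) = q_{\P(i)}$. At a placed node $j$, the already-placed preimages of $q_j$ are exactly the placed elements of $\P^{-1}(j)$: any other placed point $q_{j''}$ satisfies $f(q_{j''}) = q_{\P(j'')} \ne q_j$ by distinctness, since $\P(j'') \ne j$. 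Hence the new indices of $\P^{-1}(j)$ can be assigned to the unused members of the $d$-element fiber $f^{-1}(q_j)$. The counts work out by admissibility condition (1): a periodic $j$ has exactly one placed preimage (its cyclic predecessor), leaving $d - 1 \ge |\P^{-1}(j)| - 1$ free slots, while a non-periodic $j$ has none placed, leaving $d \ge |\P^{-1}(j)|$ free slots. Every internal node lies in the tail-$\le m-1$ preimage tree of the chosen cycles and so is $\ne c$ by (B), hence genuinely has $d$ distinct preimages, and the new points are distinct from all previous ones by the fiber argument. This produces a configuration $q \in \conf^n$ realizing $\P$ in degree $d$, so $\conf_{\P,d}(\overline{K}) \ne \emptyset$.

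The main obstacle is the genericity input: verifying that a single $c$ meets requirements (A) and (B) simultaneously. Requirement (B) is elementary (finitely many excluded values), so the real content is (A), namely that some member of the family attains the full cycle count $M_k(d)$ for every relevant $k$ with no collapse to lower periods. I would establish this either by citing the dynatomic theory for $x^d + c$ (separability of $\Phi_k$ for generic $c$, cf. the results surrounding \cite[Theorem 14.2]{doyle/silverman}) or, for a self-contained argument, by exhibiting one degree-$d$ polynomial with the full complement of cycles and invoking Zariski-openness of maximality; in the latter case one can drop the unicritical restriction and instead choose $f$ generically in the full space of degree-$d$ polynomials, replacing (B) by the finitely many open conditions that no critical value of $f$ lands on a relevant periodic point within $m$ steps.
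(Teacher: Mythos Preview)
Your strategy is the paper's strategy: find one degree-$d$ polynomial $f$ with the maximal number $M_k(d)$ of $k$-cycles for every $k$ and no preperiodic critical points, then embed $\P$ into the preperiodic structure of $f$. You spell out the breadth-first placement of points, which the paper leaves implicit; the paper simply asserts that exhibiting such an $f$ suffices.

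The substantive difference is exactly at your ``main obstacle,'' condition (A). Rather than arguing genericity in the family $x^d+c$ or invoking dynatomic separability, the paper fixes the single value $c=1$ and verifies directly that $f(x)=x^d+1$ works. The argument is short and complex-dynamical: if $f^k(x)-x$ had a repeated root $\alpha$, then $(f^k)'(\alpha)=1$, so $\alpha$ lies on a rationally indifferent cycle; by \cite[Theorem~9.3.2]{beardon} such a cycle must attract a critical point, but the unique critical point $0$ has unbounded forward orbit under $x^d+1$. This same observation---the critical orbit escapes to infinity---gives your condition (B) in its strongest form (no preperiodic critical point whatsoever), so there is no need to track the finitely many relevant periods or tail lengths.

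Your genericity formulation is correct as far as it goes, but note that showing the failure locus of (A) is a \emph{proper} closed subset ultimately requires exhibiting one good parameter, which is precisely the content of the paper's choice $c=1$; so your ``self-contained'' alternative would in the end reduce to something like the paper's argument. What your write-up buys is a clearer picture of why admissibility conditions (1) and (2) are exactly what the embedding step needs; what the paper's buys is a concrete witness and a two-line closure of the gap you flagged.
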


\begin{proof}
It suffices to show that for each $d \ge 2$ there exists at least one degree-$d$ polynomial $f(x) \in \QQ[x]$ with the following properties:
\begin{enumerate}
    \item For all $k \ge 1$, $f$ has exactly $M_k(d)$ periodic cycles of length $k$, and
    \item Every preperiodic point for $f$ has exactly $d$ preimages in $\overline{\QQ}$; equivalently, $f$ has no preperiodic critical points.
\end{enumerate}
We claim that $f(x) = x^d + 1$ satisfies these properties. Indeed, $0$ is the only critical point for $f$, and it is straightforward to show that the orbit $\{0, f(0), f^2(0), \ldots\}$ is unbounded, hence $f$ has no preperiodic critical points. Thus, (2) holds.

Suppose for contradiction that (1) fails---that is, suppose $f$ has fewer than $M_k(d)$ periodic cycles of length $k$. Then the polynomial $g(x) := f^k(x) - x$ has fewer zeros than expected, hence $g$ and $g'$ have a common root $\alpha$. Since $g(\alpha) = f^k(\alpha) - \alpha = 0$, $\alpha$ is periodic with period dividing $k$, and $g'(\alpha) = (f^k)'(\alpha) = 1$. This means that $\alpha$ belongs to a \emph{rationally indifferent} cycle. By \cite[Theorem 9.3.2]{beardon}, this cycle must attract a critical point; since $0$ is the only critical point, the orbit of $0$ must approach the cycle containing $\alpha$, contradicting the fact that $0$ has orbit tending to $\infty$.
\end{proof}

\begin{prop}
\label{prop admissible}
Let $n \ge 1$ and $d \ge 2$ be integers, and let $\P: [n] \to [n]$ be a portrait. The following are equivalent in characteristic $0$:
    \begin{enumerate}
        \item $\P$ is admissible in degree $d$.
        \item $\conf_{\P,d} \ne \emptyset$.
        \item $\dim\conf_{\P,d} = \min\{d+1, n\}$.
    \end{enumerate}
\end{prop}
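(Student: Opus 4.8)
The plan is to prove the two equivalences separately: $(1)\Leftrightarrow(2)$, which identifies admissibility with nonemptiness, and $(2)\Leftrightarrow(3)$, whose forward direction is the real content (the dimension formula) and whose converse is formal. I would dispose of everything except the dimension computation first. The implication $(1)\Rightarrow(2)$ is exactly Lemma~\ref{lem:admissible}. For $(2)\Rightarrow(1)$ I would reuse the discussion preceding Definition~\ref{defn:admissible}: if $q\in\conf_{\P,d}$ is realized by a degree-$d$ polynomial $f$, then for each $j$ the map $i\mapsto q_i$ injects $\P^{-1}(j)$ into $f^{-1}(q_j)$ (as $f(q_i)=q_{\P(i)}=q_j$), and the latter has size at most $d$, giving condition~(1); moreover a point $i$ of exact period $k$ under $\P$ produces a point $q_i$ of exact period $k$ under $f$ (using that the $q_i$ are distinct, so $f^m(q_i)=q_{\P^m(i)}\ne q_i$ for $0<m<k$), and distinct $\P$-cycles give disjoint $f$-cycles, so the number of length-$k$ cycles of $\P$ is bounded by the number for $f$, which is at most $M_k(d)$, giving condition~(2). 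Finally $(3)\Rightarrow(2)$ is immediate, since $\min\{d+1,n\}\ge 0$ forces $\conf_{\P,d}$ to be nonempty under the convention that the empty variety has dimension $-1$.

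The heart of the matter is $(2)\Rightarrow(3)$, so assume $\conf_{\P,d}\ne\emptyset$. When $d\ge n$, the proof of Proposition~\ref{prop portrait space is a variety} gives $\conf_{\P,d}=\conf^n$, of dimension $n=\min\{d+1,n\}$, so I may assume $d\le n-1$, where $\min\{d+1,n\}=d+1$. For the lower bound, Proposition~\ref{prop portrait space is a variety} exhibits $\confhat_{\P,d}$ as the subscheme of the smooth $n$-dimensional variety $\conf^n$ cut out by the $n-1-d$ equations $b_{d+1}=\cdots=b_{n-1}=0$, so by Krull's height bound every irreducible component of $\confhat_{\P,d}$ has dimension at least $n-(n-1-d)=d+1$. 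Since $\conf_{\P,d}$ is a nonempty open subset of $\confhat_{\P,d}$, it is dense in some component and hence has dimension at least $d+1$.

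For the upper bound I would pass to the incidence variety
\[
    X := \{(f,q) : f \text{ a degree-}d\text{ polynomial},\ q\in\conf^n,\ f(q_i)=q_{\P(i)}\ \forall i\}
\]
inside $U\times\conf^n$, where $U\cong\{a_d\ne0\}\subseteq\AA^{d+1}$ is the $(d+1)$-dimensional space of degree-$d$ polynomials. Because $d<n$, Lagrange interpolation makes the projection $\pi_2\colon X\to\conf^n$ a bijection onto $\conf_{\P,d}$ whose inverse $q\mapsto(f_q,q)$ is regular (the coefficients of the interpolant $f_q$ lie in $R_n$ by the proof of Proposition~\ref{prop portrait space is a variety}, and $f_q$ has degree exactly $d$ on $\conf_{\P,d}$), so $\pi_2$ is an isomorphism and $\dim\conf_{\P,d}=\dim X$. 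I would then bound $\dim X$ through the other projection $\pi_1\colon X\to U$, $(f,q)\mapsto f$: the key claim is that every fiber of $\pi_1$ is finite. Granting this, a morphism with finite fibers cannot increase dimension, so $\dim X\le\dim U=d+1$, which matches the lower bound and completes $(2)\Rightarrow(3)$.

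The main obstacle is the finiteness of the fibers of $\pi_1$, which I would prove by induction on distance to the cycles in the functional graph of $\P$. Fix $f$ and a point $(f,q)$ in the fiber. If $i$ is periodic of exact period $k$, then $\P^k(i)=i$ forces $f^k(q_i)=q_i$, so $q_i$ is a root of the nonzero polynomial $f^k(x)-x$ (degree $d^k$, as $d\ge 2$) and thus takes one of finitely many values. If $i$ has pre-period $m\ge 1$, then $\P(i)$ has pre-period $m-1$, so by induction $q_{\P(i)}$ ranges over a finite set, and for each such value $q_i$ lies in the finite set $f^{-1}(q_{\P(i)})$ of size at most $d$; hence $q_i$ again takes finitely many values. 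As every element of the finite set $[n]$ is preperiodic under $\P$, each coordinate $q_i$ is confined to a finite set and the fiber is finite. I expect the remaining difficulties to be purely bookkeeping: checking that $X$ is genuinely a locally closed subvariety of $U\times\conf^n$ cut out by polynomials in the coefficients of $f$ and the $q_i$, and confirming the regularity of the inverse to $\pi_2$ so that the identification $\conf_{\P,d}\cong X$ is an isomorphism of varieties rather than a mere bijection.
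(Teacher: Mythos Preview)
Your argument is correct, and it largely tracks the paper's proof: you handle the case $n\le d$ the same way, you introduce the same incidence variety (called $\Poly_{\P,d}$ in the paper), and you establish the isomorphism $\conf_{\P,d}\cong X$ by the same Lagrange-interpolation reasoning. The implications $(1)\Rightarrow(2)$ and $(2)\Rightarrow(1)$ are also handled just as the paper does (the latter via the discussion preceding Definition~\ref{defn:admissible}).

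Where you diverge is in computing $\dim X$ when $n>d$. The paper does not argue this directly; it invokes the dimension-counting results of \cite[Theorem~15.8 and Remark~15.9]{doyle/silverman} for the parameter space $\mathrm{End}_d^1[\P']$ (after augmenting $\P$ with a totally ramified fixed point at $\infty$) to obtain $\dim\Poly_{\P,d}=d+1$ whenever $\Poly_{\P,d}$ is nonempty. Your argument replaces that black box with two elementary steps: the lower bound $\dim\conf_{\P,d}\ge d+1$ from Krull's principal ideal theorem applied to the $n-1-d$ defining equations of $\confhat_{\P,d}$ in the smooth $n$-fold $\conf^n$, and the upper bound $\dim X\le d+1$ from the observation that the projection $X\to U$ to the $(d+1)$-dimensional space of degree-$d$ polynomials has finite fibers, proved by inducting on pre-period in the functional graph of $\P$. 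This buys self-containment---you avoid importing nontrivial results about moduli of endomorphisms of $\PP^N$---at the cost of spelling out the finite-fiber induction. The paper's route buys brevity and situates the statement within the general theory of dynamical moduli spaces.
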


\begin{proof}
First, suppose $n \le d$. If $q = (q_1, q_2, \ldots,q_n) \in \conf^n$ is any configuration, then by Lemma \ref{lem:vandermonde} there exists a monic degree-$d$ polynomial $f(x)$ such that $f(q_i) = q_{\P(i)}$. As in the proof of Proposition~\ref{prop portrait space is a variety}, it follows that $\conf_{\P,d} = \conf^n$. Thus, when $n \le d$, (2) and (3) are true, and (1) is true since a portrait on at most $d$ points is automatically admissible in degree $d$.

Now suppose $n \ge d + 1$. 
Consider the augmented realization space
\[
    \Poly_{\P,d} := \{(f,q_1,\ldots,q_n) : \deg f = d \text{ and } f(q_i) = q_{\P(i)} \text{ for all } 1 \le i \le n\}.
\]
Thinking of a polynomial as a rational map for which $\infty$ is a totally ramified fixed point, the dimension counting results\footnote{Let $\P'$ be the {\it weighted} portrait, in the terminology of \cite{doyle/silverman}, obtained by adding to $\P$ a single fixed point of weight $d$, corresponding to a totally ramified fixed point. The cited results of \cite{doyle/silverman} show that the parameter space associated to $\P'$---denoted ${\rm End}_d^1[\P']$ in \cite{doyle/silverman}---has dimension $d + 2$, but we lose a dimension by moving the totally ramified fixed point to $\infty$.} of \cite[Theorem 15.8 and Remark 15.9]{doyle/silverman} imply that
\[
    \P \text{ is a degree-$d$ admissible portrait} \Longleftarrow \Poly_{\P,d} \ne\emptyset \iff \dim\Poly_{\P,d} = d + 1.
\]
Since we have already shown that (1) implies (2) in Lemma~\ref{lem:admissible}, it therefore suffices to show that $\Poly_{\P,d} \cong \conf_{\P,d}$. Consider the projection map
    \begin{align*}
        \Psi_d : \Poly_{\P,d} &\longrightarrow \conf_{\P,d}\\
            (f,q_1,\ldots,q_n) &\longmapsto (q_1,\ldots,q_n).
    \end{align*}
Certainly $\Psi_d$ is a morphism, and it is surjective by the definition of $\conf_{\P,d}$. Moreover, since $n \ge d + 1$, Lagrange interpolation implies that for each configuration $q \in \conf_{\P,d}$ there is a \emph{unique} polynomial $f$ for which $\Psi_d(f,q) = q$, so $\Psi_d$ is bijective as a map on sets. The coefficients of $f$ may be expressed as regular functions on $\conf^n$ (see the proof of Proposition~\ref{prop portrait space is a variety}) so the inverse $\Psi_d^{-1}$ is also a morphism. Therefore, $\Psi_d$ is an isomorphism, completing the proof.
\end{proof}

\subsection{Remarks on the moduli spaces \texorpdfstring{$\M_{\P,d}$}{M P,d} and \texorpdfstring{$\Mhat_{\P,d}$}{\hat{M} P,d}}
\label{sec:remark_on_moduli}
Let $\aff_1$ denote the group of linear polynomials with respect to composition.\footnote{From the perspective of $\PGL_2$ acting on $\wh{\CC}$ by M\"obius transformations, the group $\aff_1$ is the Borel subgroup of upper triangular matrices in $\PGL_2$.}
If $\ell(x) \in \aff_1$ and $f(x)$ is the degree-$d$ polynomial witnessing the realization $q \in \conf_{\P,d}$, then $\widetilde{f} := \ell \circ f \circ \ell^{-1}$ has degree $d$ and 
\[
    \widetilde{f}(\ell(q_i)) = \ell(f(q_i)) = \ell(q_{\P(i)}).
\]
Hence $\ell(q) = (\ell(q_1), \ell(q_2), \ldots, \ell(q_n)) \in \conf_{\P,d}$. Thus we define the \emph{portrait moduli spaces} $\M_{\P,d}$ and $\wh{\M}_{\P,d}$ as the quotients
\[
    \M_{\P,d} := \conf_{\P,d}/\aff_1
    \quad\text{and}\quad
    \wh{\M}_{\P,d} := \wh{\conf}_{\P,d}/\aff_1.
\] 
For $n\geq 2$, $\M_{\P,d}$ (resp., $\Mhat_{\P,d}$) is a fine moduli space for the moduli problem of degree-$d$ realizations (resp., degree-at-most-$d$ realizations) of $\P$. Indeed, $\M_{\P,d}$ and $\Mhat_{\P,d}$ are coarse moduli spaces by construction, hence it suffices to show that no point on $\confhat_{\P,d}$ (thus no point on $\conf_{\P,d}$) has a nontrivial stabilizer. Recall that $\aff_1$ acts sharply $2$-transitively on $\AA^1$. Thus, if $q = (q_1, q_2, \ldots, q_n) \in \confhat_{\P,d}$ and $\ell(x) \in \aff_1$ fixes $q_1$ and $q_2$, then $\ell(x) = x$.

Moreover, for $n\geq 2$,
$
    \dim \M_{\P,d} = \dim \conf_{\P,d} - 2,
$
since $\aff_1$ is a 2-dimensional algebraic group acting faithfully on $\conf_{\P,d}$. Thus Corollary \ref{cor:Mpd_dim} is an immediate consequence of Proposition \ref{prop admissible}.

\begin{cor}
\label{cor:Mpd_dim}
Let $n,d \ge 2$, and let $\P: [n]\to[n]$ be a portrait. The following are equivalent in characteristic zero:
\begin{enumerate}
    \item $\P$ is a degree-$d$ admissible portrait.
    \item $\M_{\P,d} \ne\emptyset$.
    \item $\dim \M_{\P,d} = \min\{d-1,n-2\}$.
\end{enumerate}
\end{cor}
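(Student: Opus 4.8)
The plan is to transport the three-way equivalence of Proposition~\ref{prop admissible} from the realization space $\conf_{\P,d}$ down to its quotient $\M_{\P,d}$, relying on the two facts recorded in the preceding discussion: that $\aff_1$ acts on $\conf_{\P,d}$ with trivial stabilizers (by sharp $2$-transitivity, using $n \geq 2$), and that consequently $\dim \M_{\P,d} = \dim \conf_{\P,d} - 2$. Since Proposition~\ref{prop admissible} already gives $(1) \Leftrightarrow \conf_{\P,d} \neq \emptyset \Leftrightarrow \dim \conf_{\P,d} = \min\{d+1,n\}$, the whole corollary should fall out by applying these two transfer principles.

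First I would dispatch $(1) \Leftrightarrow (2)$. As $\M_{\P,d}$ is by definition the quotient $\conf_{\P,d}/\aff_1$, it is nonempty precisely when $\conf_{\P,d}$ is nonempty; combining this with the equivalence $(1) \Leftrightarrow (2)$ of Proposition~\ref{prop admissible} gives the corresponding equivalence for $\M_{\P,d}$ at once. Next I would establish $(1) \Rightarrow (3)$: assuming $\P$ is admissible, Proposition~\ref{prop admissible} yields $\dim \conf_{\P,d} = \min\{d+1,n\}$, and the faithful action of the $2$-dimensional group $\aff_1$ then gives
\[
    \dim \M_{\P,d} = \dim \conf_{\P,d} - 2 = \min\{d+1,n\} - 2 = \min\{d-1,n-2\}.
\]

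Finally, for $(3) \Rightarrow (2)$ I would observe that since $n, d \geq 2$ we have $\min\{d-1,n-2\} \geq 0$; under the convention that the empty variety has dimension $-1$, the hypothesis $\dim \M_{\P,d} = \min\{d-1,n-2\} \geq 0$ forces $\M_{\P,d} \neq \emptyset$, closing the cycle. The argument is essentially immediate rather than obstructed; the only point I would treat with care is the dimension bookkeeping, namely ensuring the orbit dimension is exactly $2$ (which is where freeness of the $\aff_1$-action is used) and that the empty case is handled consistently with the dimension-$(-1)$ convention, so that condition $(3)$ cannot be satisfied vacuously by an empty moduli space when $n, d \geq 2$.
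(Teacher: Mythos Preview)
Your proposal is correct and follows essentially the same approach as the paper: the paper simply records that $\dim \M_{\P,d} = \dim \conf_{\P,d} - 2$ (since $\aff_1$ is $2$-dimensional and acts freely on $\conf_{\P,d}$ for $n \ge 2$) and declares the corollary an immediate consequence of Proposition~\ref{prop admissible}. Your write-up just spells out the three implications and the empty-variety convention more carefully than the paper bothers to.
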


While $\conf_{\P,d}$ is a Zariski open subset of $\confhat_{\P,d}$, it need not be dense in $\confhat_{\P,d}$. In fact, the dimension of $\conf_{\P,d}$ may be strictly less than that of $\confhat_{\P,d}$. We illustrate with two examples:
\begin{enumerate}
    \item If $\P$ is the portrait consisting of two $2$-cycles, then $\P$ is not degree-$2$ admissible, hence $\conf_{\P,2} = \emptyset$. On the other hand, $\confhat_{\P,2} = \conf_{\P,1}$ has dimension $3$ by Proposition~\ref{prop:lower_degrees}(2).
    \item If $\P$ is the portrait consisting of three $4$-cycles, then $\P$ is degree-$2$ admissible, so $\conf_{\P,2}$ is nonempty; in fact, the dimension of $\conf_{\P,2}$ is $3$ by Proposition~\ref{prop admissible}. On the other hand, by Proposition~\ref{prop:lower_degrees}(2), we have $\dim \conf_{\P,1} = 4$. Since $\confhat_{\P,2}$ is the (disjoint) union of $\conf_{\P,1}$ and $\conf_{\P,2}$, we have $\dim \confhat_{\P,2} = 4 > \dim \conf_{\P,2}$.
\end{enumerate}

On the other hand, if $n\leq 2d$ and $\P$ is not the identity portrait\footnote{If $\P$ is the identity portrait on $n$ elements, then $\dim_{\P,1} = n$ by Proposition~\ref{prop:lower_degrees}. Moreover, by Proposition~\ref{prop admissible}, $\P$ is admissible in degree $d$ if and only if $n \le d$. In this case, $\dim \conf_{\P,d} = d + 1 > n = \conf_{\P,1}$, so the subsequent discussion is still valid for the identity portrait $\P$.}, then $\dim \conf_{\P,1} \leq d + 1$ by Proposition \ref{prop:lower_degrees}, with equality when $\P$ consists of $d$ 2-cycles. Hence if $\P$ is admissible in degree $d$, then 
\[
    \dim \wh{\conf}_{\P,d} = \dim \conf_{\P,d} = d + 1.
\]
However, if $\dim \conf_{\P,1} = \dim \conf_{\P,d} = d + 1$, then the space $\conf_{\P,d}$ will still not be Zariski dense in $\confhat_{\P,d}$. Taking quotients, the above remarks are also valid for $\M_{\P,d} \subset \Mhat_{\P,d}$. In particular, if $n \leq 2d$ and $\P$ is not the identity portrait, then
\begin{equation}
\label{eqn dimension bound}
    \dim \wh{\M}_{\P,d} = \dim \M_{\P,d} = d - 1.
\end{equation}

Finally, note that Theorem \ref{thm intro single portrait} is equivalent to the combination of Proposition \ref{prop portrait space is a variety} and Corollary \ref{cor:Mpd_dim}.

\section{Intersections of realization spaces}
\label{sec:data}

In this section we study the intersections of portrait realization spaces. If $\P, \Q : [n] \rightarrow [n]$ are portraits, then $\conf_{\P,d}$ and $\conf_{\Q,d}$ live in the same ambient space $\conf^n$. Let
\[
    \M_{\P,\Q,d} := (\conf_{\P,d} \cap \conf_{\Q,d})/\aff_1
    \quad\text{and}\quad
    \wh{\M}_{\P,\Q,d} := (\wh{\conf}_{\P,d} \cap \wh{\conf}_{\Q,d})/\aff_1
\]
denote the moduli space of affine equivalence classes of configurations $q \in \conf^n$ which have degree $d$ (respectively, degree at most $d$) realizations of both $\P$ and $\Q$. For $(n,d) = (4,2)$ and $(6,3)$ we have conducted a computational survey of all the spaces $\wh{\M}_{\P,\Q,d}$ with $\P$ and $\Q$ admissible degree-$d$ portraits. An analysis of the findings and results inspired by the data are discussed below.

\subsection{Dimension heuristic}
\label{sec:dimension_heuristic}

Suppose for simplicity that $n > d \geq 1$. Let $q \in \conf^n$ be a configuration of $n$ points. If $q \in \conf_{\P,d}$, then the existence of a degree-$d$ endomorphism $f(x)$ of $q$ imposes $n - d - 1$ algebraic constraints on the coordinates of $q$. (See the proof of Proposition \ref{prop portrait space is a variety}.) Thus the expected dimension of $\conf_{\P,d} \cap \conf_{\Q,d}$ is
$
    n - 2(n - d - 1) = 2d - n + 2,
$
where we interpret a negative dimension to mean that the space is empty.

The 2-dimensional group $\aff_1$ acts freely on $\conf^n$, hence the expected dimension of the quotient $\M_{\P,\Q,d}$ is $2d - n$. The same dimension heuristic applies to $\wh{\M}_{\P,\Q,d}$. We emphasize that this is only a heuristic: the algebraic conditions imposed by the two portraits may not be independent, or the system of equations may only have degenerate solutions in $\AA^n \setminus \conf^n$. Since $\M_{\P,\Q,d} \subseteq \M_{\P,d}$, Corollary \ref{cor:Mpd_dim} implies that in characteristic zero we have $\dim \M_{\P,\Q,d} \leq d - 1$, and if we restrict to $n \leq 2d$, \eqref{eqn dimension bound} implies that $\dim \wh{\M}_{\P,\Q,d} \leq d - 1$.

\begin{example}
\label{ex:dim_count}
As noted above, if $n = 4$ and $d = 2$, then the expected dimension of $\M_{\P,\Q,2}$ is 0 and the maximal possible dimension is 1. Consider the portraits $\P$, $\Q_1$, $\Q_2$, and $\Q_3$ illustrated in
Figure~\ref{tab:different_dimensions_examples}. For $i = 1,2,3$, the spaces $\M_{\P,\Q_i,2}$ have dimensions $-1$, $0$ and $1$, respectively, in characteristic zero. More precisely, the space $\M_{\P,\Q_1,2} = \emptyset$, $\M_{\P,\Q_2,2} = \{(0,1,2,3)\}$ consists of exactly one point,
and $\M_{\P,\Q_3,2}$ is a genus 0 curve.

\begin{figure}[h]
    \centering
    \begin{tabular}{cccc}
    \begin{tikzpicture}[shorten >= 1pt, scale=0.5]
        \node[] () at (2, 4) {};
        \node[] () at (2, -1.5) {};
        \node (0) at (0, 0) {$1$};
        \node (1) at (2.5, 0) {$2$};
        \node (2) at (2.5, 2.5) {$4$};
        \node (3) at (0, 2.5) {$3$};
        \draw[-{Latex[length=1.5mm,width=2mm]}, thick] (0) to [out=245, in=295, looseness=7] (0);
        \draw[-{Latex[length=1.5mm,width=2mm]}, thick] (1) to [out=245, in=295, looseness=7] (1);
        \path[-{Latex[length=1.5mm,width=2mm]}, thick] (2) edge (0);
        \path[-{Latex[length=1.5mm,width=2mm]}, thick] (3) edge (1);
    \end{tikzpicture}
    &\hspace{10mm}
    \begin{tikzpicture}[shorten >= 1pt, scale=0.5]
        \node[] () at (2, 4) {};
        \node[] () at (2, -1.5) {};
        \node (0) at (0, 0) {$1$};
        \node (1) at (2.5, 0) {$2$};
        \node (2) at (2.5, 2.5) {$4$};
        \node (3) at (0, 2.5) {$3$};
        \draw[-{Latex[length=1.5mm,width=2mm]}, thick, red] (0) to [out=245, in=295, looseness=7] (0);
        \draw[-{Latex[length=1.5mm,width=2mm]}, thick, red] (1) to [out=245, in=295, looseness=7] (1);
        \path[-{Latex[length=1.5mm,width=2mm]}, thick, red] (2) edge (1);
        \path[-{Latex[length=1.5mm,width=2mm]}, thick, red] (3) edge (0);
    \end{tikzpicture}
    &\hspace{10mm}
    \begin{tikzpicture}[shorten >= 1pt, scale=0.5]
        \node[] () at (2, 4) {};
        \node[] () at (2, -1.5) {};
        \node (0) at (0, 0) {$1$};
        \node (1) at (2.5, 0) {$2$};
        \node (2) at (2.5, 2.5) {$4$};
        \node (3) at (0, 2.5) {$3$};
        \draw[-{Latex[length=1.5mm,width=2mm]}, thick, red] (0) to [out=245, in=295, looseness=7] (0);
        \path[-{Latex[length=1.5mm,width=2mm]}, thick, red] (1) edge (0);
        \draw[-{Latex[length=1.5mm,width=2mm]}, thick, red] (2) to
        [out=245, in=295, looseness=7] (2);
        \path[-{Latex[length=1.5mm,width=2mm]}, thick, red] (3) edge (1);
    \end{tikzpicture}
    &\hspace{10mm}
    \begin{tikzpicture}[shorten >= 1pt, scale=0.5]
        \node[] () at (2, 4) {};
        \node[] () at (2, -1.5) {};
        \node (0) at (0, 0) {$1$};
        \node (1) at (2.5, 0) {$2$};
        \node (2) at (2.5, 2.5) {$4$};
        \node (3) at (0, 2.5) {$3$};
        \draw[-{Latex[length=1.5mm,width=2mm]}, thick, red] (0) to [out=330, in=210] (1);
        \draw[-{Latex[length=1.5mm,width=2mm]}, thick, red] (1) to [out=150, in=30] (0);
        \path[-{Latex[length=1.5mm,width=2mm]}, thick, red] (2) edge (1);
        \path[-{Latex[length=1.5mm,width=2mm]}, thick, red] (3) edge (0);
    \end{tikzpicture}
    \\
    $\P$ &\hspace{10mm} $\Q_1$ &\hspace{10mm} $\Q_2$ &\hspace{10mm} $\Q_3$\\
    \end{tabular}
    \caption{}
    \label{tab:different_dimensions_examples}
\end{figure}

\end{example}

\subsection{Combinatorial equivalence}

Recall that if $\P: [n] \rightarrow [n]$ is a portrait and $\sigma$ is a permutation of $[n]$, then $\P^\sigma := \sigma^{-1} \circ \P \circ \sigma$ is the relabelling of $\P$ by $\sigma$.

\begin{defn}
\label{defn:equiv_pair}
Two unordered pairs $\{\P, \Q\}$ and $\{\P', \Q'\}$ of portraits on $[n]$ are \emph{combinatorially equivalent} if there exists a permutation $\sigma$ of $[n]$ such that $\{\P',\Q'\} = \{\P^\sigma, \Q^\sigma\}$. The combinatorial equivalence class of $\{\P,\Q\}$ is denoted $\langle \P,\Q\rangle$ .
\end{defn}

A combinatorial equivalence class of portraits $\{\P,\Q\}$ may be visualized as a pair of phase portraits acting on an unlabeled set; see Figure \ref{fig:equivalence_ex}.
Note that if $\{\P,\Q\}$ and $\{\P',\Q'\}$ are combinatorially equivalent, then applying Proposition~\ref{prop:conjugation_isomorphism} and taking intersections implies that $\M_{\P,\Q,d} \cong \M_{\P',\Q',d}$ and $\wh{\M}_{\P,\Q,d} \cong \wh{\M}_{\P',\Q',d}$.

\begin{figure}[H]
    \centering
    \begin{tikzpicture}[shorten >= 0pt, scale=0.4]
    \node (00) at (0, 0) {2};
    \node (10) at (4, 0) {4};
    \node (11) at (4, 4) {3};
    \node (01) at (0, 4) {1};
    \node (20) at (8, 0) {6};
    \node (21) at (8, 4) {5};
    \draw[-{Latex[length=1.5mm,width=2mm]}, thick, red] (00) edge (10);
    \draw[-{Latex[length=1.5mm,width=2mm]}, thick, red] (01) edge [out=-20, in=-160] (11);
    \draw[-{Latex[length=1.5mm,width=2mm]}, thick, red] (11) edge [out=250, in=110] (10);
    \draw[-{Latex[length=1.5mm,width=2mm]}, thick, red] (10) edge [out=-20, in=-160] (20);
    \draw[-{Latex[length=1.5mm,width=2mm]}, thick, red] (20) edge [out=70, in=290] (21);
    \draw[-{Latex[length=1.5mm,width=2mm]}, thick, red] (21) edge [out=0, in=90, looseness = 7] (21);

    \path[-{Latex[length=1.5mm,width=2mm]}, thick, dashed] (00) edge (01);
    \path[-{Latex[length=1.5mm,width=2mm]}, thick, dashed] (01) edge [out=20, in=160] (11);
    \path[-{Latex[length=1.5mm,width=2mm]}, thick, dashed] (10) edge [out=20, in=160] (20);
    \path[-{Latex[length=1.5mm,width=2mm]}, thick, dashed] (11) edge [out=290, in=70] (10);
    \path[-{Latex[length=1.5mm,width=2mm]}, thick, dashed] (20) edge [out=110, in=250] (21);
    \path[-{Latex[length=1.5mm,width=2mm]}, thick, dashed] (21) edge (11);
\end{tikzpicture}
\hspace{5mm}
\begin{tikzpicture}[shorten >= 0pt, scale=0.4]
    \node (00) at (0, 0) {1};
    \node (10) at (4, 0) {2};
    \node (11) at (4, 4) {5};
    \node (01) at (0, 4) {6};
    \node (20) at (8, 0) {3};
    \node (21) at (8, 4) {4};
    \draw[-{Latex[length=1.5mm,width=2mm]}, thick, red] (00) edge (10);
    \draw[-{Latex[length=1.5mm,width=2mm]}, thick, red] (01) edge [out=-20, in=-160] (11);
    \draw[-{Latex[length=1.5mm,width=2mm]}, thick, red] (11) edge [out=250, in=110] (10);
    \draw[-{Latex[length=1.5mm,width=2mm]}, thick, red] (10) edge [out=-20, in=-160] (20);
    \draw[-{Latex[length=1.5mm,width=2mm]}, thick, red] (20) edge [out=70, in=290] (21);
    \draw[-{Latex[length=1.5mm,width=2mm]}, thick, red] (21) edge [out=0, in=90, looseness = 7] (21);

    \path[-{Latex[length=1.5mm,width=2mm]}, thick, dashed] (00) edge (01);
    \path[-{Latex[length=1.5mm,width=2mm]}, thick, dashed] (01) edge [out=20, in=160] (11);
    \path[-{Latex[length=1.5mm,width=2mm]}, thick, dashed] (10) edge [out=20, in=160] (20);
    \path[-{Latex[length=1.5mm,width=2mm]}, thick, dashed] (11) edge [out=290, in=70] (10);
    \path[-{Latex[length=1.5mm,width=2mm]}, thick, dashed] (20) edge [out=110, in=250] (21);
    \path[-{Latex[length=1.5mm,width=2mm]}, thick, dashed] (21) edge (11);
\end{tikzpicture}
\hspace{5mm}
\begin{tikzpicture}[shorten >= 0pt, scale=0.4]
    \node (00) at (0, 0) {$\bullet$};
    \node (10) at (4, 0) {$\bullet$};
    \node (11) at (4, 4) {$\bullet$};
    \node (01) at (0, 4) {$\bullet$};
    \node (20) at (8, 0) {$\bullet$};
    \node (21) at (8, 4) {$\bullet$};
    \draw[-{Latex[length=1.5mm,width=2mm]}, thick, red] (00) edge (10);
    \draw[-{Latex[length=1.5mm,width=2mm]}, thick, red] (01) edge [out=-20, in=-160] (11);
    \draw[-{Latex[length=1.5mm,width=2mm]}, thick, red] (11) edge [out=250, in=110] (10);
    \draw[-{Latex[length=1.5mm,width=2mm]}, thick, red] (10) edge [out=-20, in=-160] (20);
    \draw[-{Latex[length=1.5mm,width=2mm]}, thick, red] (20) edge [out=70, in=290] (21);
    \draw[-{Latex[length=1.5mm,width=2mm]}, thick, red] (21) edge [out=0, in=90, looseness = 7] (21);

    \path[-{Latex[length=1.5mm,width=2mm]}, thick, dashed] (00) edge (01);
    \path[-{Latex[length=1.5mm,width=2mm]}, thick, dashed] (01) edge [out=20, in=160] (11);
    \path[-{Latex[length=1.5mm,width=2mm]}, thick, dashed] (10) edge [out=20, in=160] (20);
    \path[-{Latex[length=1.5mm,width=2mm]}, thick, dashed] (11) edge [out=290, in=70] (10);
    \path[-{Latex[length=1.5mm,width=2mm]}, thick, dashed] (20) edge [out=110, in=250] (21);
    \path[-{Latex[length=1.5mm,width=2mm]}, thick, dashed] (21) edge (11);
\end{tikzpicture}
    \caption{The first two diagrams illustrate distinct but equivalent portrait pairs, and the third diagram illustrates their equivalence class. }
    \label{fig:equivalence_ex}
\end{figure}
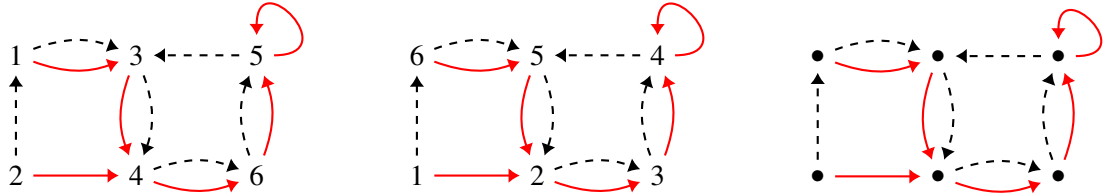

\subsection{Computational results}
\label{sec:computational}
We refer to a combinatorial equivalence class $\langle \P,\Q\rangle$ of admissible degree-2 (resp., degree-3) portraits on four (resp., six) points as a \emph{quadratic portrait pair} (resp., \emph{cubic portrait pair}). There are 780 quadratic portrait pairs and 1350742 cubic portrait pairs. For all such portrait pairs we computed basic invariants of the moduli spaces $\wh{\M}_{\P,\Q,d}$ of degree-at-most-$d$ realizations of $\P$ and $\Q$. \emph{Throughout this subsection, we assume that all realization spaces and moduli spaces are defined over a field $K$ of characteristic zero.}

The first invariant of $\wh{\M}_{\P,\Q,d}$ we consider is dimension. Note that
\[
    -1 \leq \dim \wh{\M}_{\P,\Q,d} \leq d - 1,
\]
where the upper bound holds because $n = 2d$ for $(n,d) = (4,2)$ or $(6,3)$ (see Section \ref{sec:dimension_heuristic}). Recall that we say the dimension of a space $\M$ is $-1$ when $\M = \emptyset$. The dimension heuristic derived in Section \ref{sec:dimension_heuristic} suggests that $\wh{\M}_{\P,\Q,d}$ should typically have dimension 0, hence that $\wh{\M}_{\P,\Q,d}$ should be a finite set. The dimensions of $\wh{\M}_{\P,\Q,d}$ for each quadratic and cubic portrait pair are tabulated in Table~\ref{tab:dim_deg2_and3} (labeled Table~\ref{tab:intro dim_deg2_and3} in the introduction).

\begin{table}[h]
    \caption{}
    \label{tab:dim_deg2_and3}
    \begin{tabular}{l||cccc|c}
    Dimension & $-1$ & $0$ & $1$ & $2$ & Total\\\hline\hline
    Quadratic pairs & $198$ & $568$ & $14$ &  & $780$\\\hline
    Cubic pairs & $52310$ & $1297349$ & $1065$ & $18$ & $1350742$
    \end{tabular}
\end{table}

In both degrees the most common dimension is 0, matching our expectation. However, there are many portrait pairs for which the dimension of $\wh{\M}_{\P,\Q,d}$ takes an unexpected value.

\begin{question}
\label{quest:dimension}
What combinatorial properties of $\langle \P, \Q\rangle$ imply that the dimension $\wh{\M}_{\P,\Q,d}$ will differ from the expected dimension?
\end{question}

\subsubsection{Dimension 0}\label{sec:0-dim}
Recall that the \emph{degree} of a $0$-dimensional scheme is the (finite) number of points on that scheme counted with multiplicity. For every quadratic portrait pair $\langle\P,\Q\rangle$ with $\dim \Mhat_{\P,\Q,2} = 0$, the degree of $\Mhat_{\P,\Q,2}$ was at most $7$, and in Table~\ref{tab:deg2_deg} we list the number of quadratic portrait pairs with $\dim \wh{\M}_{\P,\Q,2} = 0$ of each given degree.

\begin{table}[h]
    \centering
    \caption{}
    \label{tab:deg2_deg}
    \begin{tabular}{r||ccccccc|c}
        Degree & $1$ & $2$ & $3$ & $4$ & $5$ & $6$ & $7$ & Total \\\hline
        Quadratic pairs & $65$ & $166$ & $121$ & $116$ & $62$ & $29$ & $9$ & $568$
    \end{tabular}
\end{table}

The affine variety $\wh{\M}_{\P,\Q,d}$ is defined by polynomials with integral coefficients. Hence for each portrait pair $\langle \P, \Q \rangle$ with $\dim \wh{\M}_{\P,\Q,d} = 0$ and each prime field $K$, there is an associated splitting field for $\wh{\M}_{\P,\Q,d}$ over $K$. Of the 568 quadratic portrait pairs with 0-dimensional moduli spaces, the only pairs with any $\QQ$-rational realizations are the 65 with degree 1; note that since the spaces $\wh{\M}_{\P,\Q,d}$ are defined over $\ZZ$, degree 1 forces the unique point to be rational.

\begin{question}
If $\dim \wh{\M}_{\P,\Q,d} = 0$, how do properties of the splitting fields of $\wh{M}_{\P,\Q,d}$ reflect combinatorial properties of $\langle \P,\Q\rangle$?
\end{question}

Over 96\% of the cubic portrait pairs $\langle\P,\Q\rangle$ have $\dim \Mhat_{\P,\Q,3} = 0$. The histogram in Figure~\ref{fig:deg_deg3} summarizes the frequency with which each degree occurs for these portraits.

\begin{figure}[h]
    \includegraphics[scale=.5]{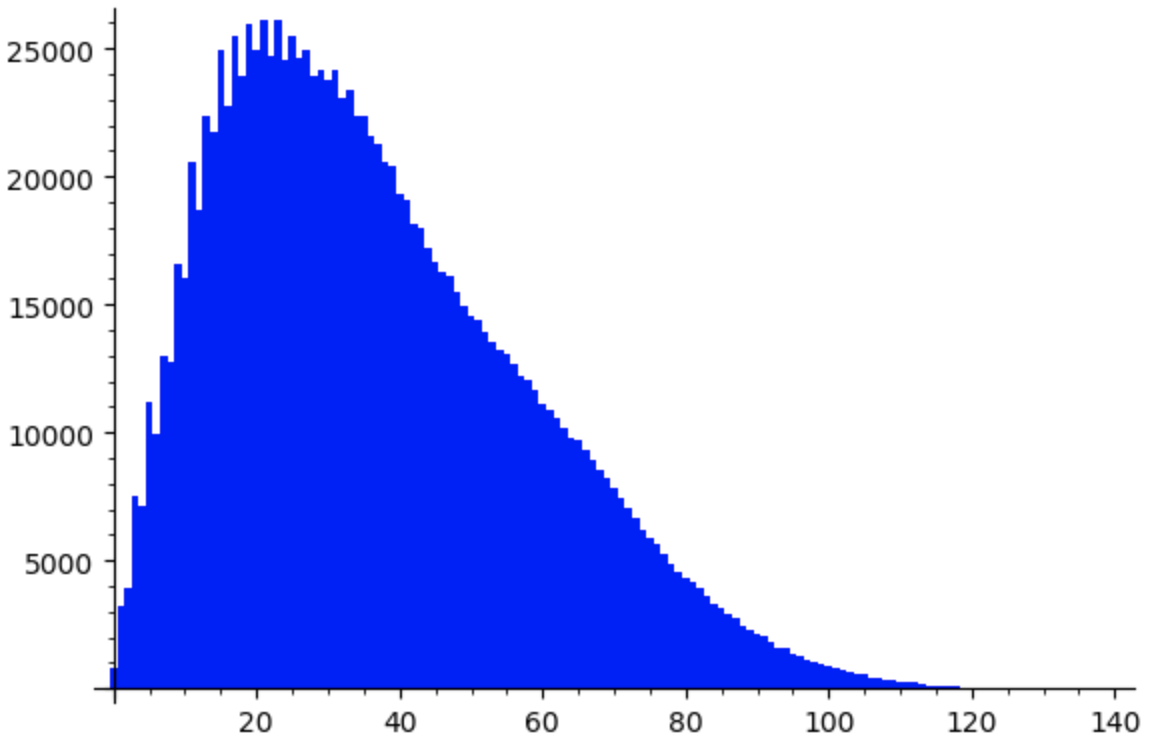}
    \caption{Degree of $\Mhat_{\P,\Q,3}$ for cubic pairs $\langle \P,\Q\rangle$ with $\dim \Mhat_{\P,\Q,3} = 0$. Degrees are listed along the horizontal axis; the vertical axis lists the number of cubic portrait pairs for which $\Mhat_{\P,\Q,3}$ has the given degree.}
    \label{fig:deg_deg3}
\end{figure}

The maximal degree is 144, achieved by the unique cubic portrait pair $\langle \P, \Q \rangle$ illustrated in Figure~\ref{fig:deg3_maxdeg}. We note that for this pair $\langle\P,\Q\rangle$, the space $\wh{\M}_{\P,\Q,3}$ is irreducible over $\QQ$, hence consists of 144 distinct, Galois conjugate points. The most common degree is 22, followed closely by 24, realized by 26083 and 26071 cubic portrait pairs respectively.

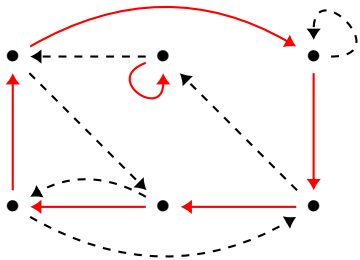
\begin{figure}[h]
    \centering
    \begin{tikzpicture}
        \node (1) at (0, 2) {$\bullet$};
        \node (0) at (2, 2) {$\bullet$};
        \node (2) at (4, 2) {$\bullet$};
        \node (5) at (0, 0) {$\bullet$};
        \node (4) at (2, 0) {$\bullet$};
        \node (3) at (4, 0) {$\bullet$};
        \draw[-{Latex[length=1.5mm,width=2mm]}, thick, red] (0) to [out=200, in=270, looseness=6] (0);
        \draw[-{Latex[length=1.5mm,width=2mm]}, thick, red] (1) to [out=30,in=150] (2);
        \draw[-{Latex[length=1.5mm,width=2mm]}, thick, red] (2) to (3);
        \draw[-{Latex[length=1.5mm,width=2mm]}, thick, red] (3) to (4);
        \draw[-{Latex[length=1.5mm,width=2mm]}, thick, red] (4) to (5);
        \draw[-{Latex[length=1.5mm,width=2mm]}, thick, red] (5) to (1);
        \draw[-{Latex[length=1.5mm,width=2mm]}, thick, dashed] (0) to (1);
        \draw[-{Latex[length=1.5mm,width=2mm]}, thick, dashed] (1) to (4);
        \draw[-{Latex[length=1.5mm,width=2mm]}, thick, dashed] (2) to [out=0,in=90,looseness=7] (2);
        \draw[-{Latex[length=1.5mm,width=2mm]}, thick, dashed] (3) to (0);
        \draw[-{Latex[length=1.5mm,width=2mm]}, thick, dashed] (4) to [out=150,in=30] (5);
        \draw[-{Latex[length=1.5mm,width=2mm]}, thick, dashed] (5) to [out=330,in=210] (3);
    \end{tikzpicture}
    \caption{The unique cubic portrait pair $\langle \P,\Q\rangle$ for which $\Mhat_{\P,\Q,3}$ is $0$-dimensional and achieves the maximal degree of $144$.}
    \label{fig:deg3_maxdeg}
\end{figure}

\begin{remark}
When $\Mhat_{\P,\Q,d}$ is zero-dimensional, one can typically apply B\'ezout's theorem to the defining equations for $\Mhat_{\P,d}$ and $\Mhat_{\Q,d}$ to get an upper bound on the degree of $\Mhat_{\P,\Q,d}$. (See the proof of Proposition~\ref{prop portrait space is a variety} for the equations, and recall that to construct $\Mhat$ from $\confhat$ one may set $q_1 = 0$ and $q_2 = 1$.) This bound depends on the portraits $\P$ and $\Q$, but in any case the upper bound coming from B\'ezout's theorem appears to be quite a bit higher than the degree of $\Mhat_{\P,\Q,d}$. One reason for this seems to be that a significant number of common ``realizations" $q$ of $\P$ and $\Q$ are degenerate, in the sense that $q_i = q_j$ for some pair $i \ne j$.
\end{remark}

We also observe an apparent bias towards even degrees in Figure \ref{fig:deg_deg3}, most easily seen in the interlaced spikes near and to the left of the mean. In fact, there are 28911 more cubic portrait pairs for which $\wh{\M}_{\P,\Q,3}$ has even degree than odd degree.
This bias also occurs in the quadratic case,
albeit less pronounced given the smaller data set.

\begin{question}
Is it the case for all degrees $d \geq 2$ that if $\wh{\M}_{\P,\Q,d}$ has dimension 0, then the degree of $\wh{\M}_{\P,\Q,d}$ is more likely to be even? If so, why does this bias occur?
\end{question}

\subsubsection{Maximal dimension}

There are 14 quadratic portrait pairs and 18 cubic portrait pairs $\langle \P, \Q\rangle$ for which $\Mhat_{\P,\Q,d}$ achieves the maximal possible dimension of $d - 1$. The 14 quadratic portrait pairs are illustrated in Figure \ref{fig:deg2_dim1}. All but one of these 14 quadratic portrait pairs and all 18 of these cubic portrait pairs are examples of \emph{two-image portraits with the same fiber partition} (see Section \ref{sec:two image}). In Theorem \ref{thm 2 image} we prove that $\wh{\M}_{\P,\Q,d}$ achieves the maximal possible dimension of $d - 1$ for all such portrait pairs.

\input{figs/table_deg2dim2.tex}

\begin{example}
\label{ex:exceptional_quad}
The bottom rightmost portrait in Figure \ref{fig:deg2_dim1} is the one example not explained by Theorem \ref{thm 2 image}. In this case, both portraits have the combinatorial type of the portrait $\P$ shown 
below:

\begin{center}
    \begin{tikzpicture}[shorten >= 0pt, scale=0.4]
    \node (00) at (0, 0) {$q_1$};
    \node (10) at (4, 0) {$q_2$};
    \node (20) at (8, 0) {$q_3$};
    \node (30) at (12, 0) {$q_4$};

    \draw[-{Latex[length=1.5mm,width=2mm]}, thick, red] (00) edge [out=180-45,in=180+45,looseness=7](00);
    \draw[-{Latex[length=1.5mm,width=2mm]}, thick, red] (10) edge (00);
    \draw[-{Latex[length=1.5mm,width=2mm]}, thick, red] (20) edge (10);
    \draw[-{Latex[length=1.5mm,width=2mm]}, thick, red] (30) edge [out=-45,in=45,looseness=7](30);
\end{tikzpicture}
    \vspace{-5mm}
\end{center}

The proof of Proposition~\ref{prop portrait space is a variety} implies that $\conf_{\P,2}$ is the hypersurface in $\conf^4$ defined by
\[
    q_2^2 - q_2q_3 + q_3^2 + q_1q_4 - q_1q_3 - q_2q_4 = 0.
\]

Observe that $q_i \mapsto q_{5 - i}$ is an automorphism of this hypersurface. Hence if $(q_1,q_2,q_3,q_4)$ is a realization of $\P$, then so is  $(q_4,q_3,q_2,q_1)$. Thus, if $\sigma$ is the permutation of $\{1,2,3,4\}$ defined by $i\mapsto 5-i$, then $\conf_{\P,2} = \conf_{\P^\sigma,2}$ and consequently $\M_{\P,\P^\sigma,2} = \M_{\P,2}$ has dimension 1. Note that $\P$ is not admissible in degree less than 2,
hence $\wh{\M}_{\P,2} = \M_{\P,2}$.
\end{example}

\subsubsection{Impossible portraits}
We say an equivalence class $\langle \P,\Q\rangle$ of  degree-$d$ admissible portraits is \emph{impossible in degree $d$} (resp., \emph{impossible in degree at most $d$}) if $\M_{\P,\Q,d} = \emptyset$ (resp., $\wh{\M}_{\P,\Q,d} = \emptyset$.)

\begin{example}
The quadratic portrait pair $\langle \P, \Q\rangle$ on the left of Figure~\ref{fig:impossible} is impossible in degree 2 but has realizations in degree 1; for example, $q = (1,i,-1,-i)$ realizes $\P$ and $\Q$ via the degree-$1$ polynomials $f(x) = ix$ and $g(x) = -ix$, respectively. The quadratic portrait pair on the right is impossible in degree at most 2.
\begin{figure}[h]
    \centering
    \begin{tikzpicture}[shorten >= 0pt, scale=0.4]
    \node (00) at (0, 0) {$\bullet$};
    \node (10) at (4, 0) {$\bullet$};
    \node (11) at (4, 4) {$\bullet$};
    \node (01) at (0, 4) {$\bullet$};

    \draw[-{Latex[length=1.5mm,width=1.5mm]}, thick, red] (00) edge [out=-20,in=180+20](10);
    \draw[-{Latex[length=1.5mm,width=1.5mm]}, thick, red] (10) edge [out=90-20,in=-90+20](11);
    \draw[-{Latex[length=1.5mm,width=1.5mm]}, thick, red] (11) edge [out=180-20,in=20](01);
    \draw[-{Latex[length=1.5mm,width=1.5mm]}, thick, red] (01) edge [out=-90-20,in=90+20](00);

    \path[-{Latex[length=1.5mm,width=1.5mm]}, thick, dashed] (00) edge [out=90-20,in=-90+20](01);
    \path[-{Latex[length=1.5mm,width=1.5mm]}, thick, dashed] (01) edge [out=-20,in=180+20](11);
    \path[-{Latex[length=1.5mm,width=1.5mm]}, thick, dashed] (11) edge [out=-90-20,in=90+20](10);
    \path[-{Latex[length=1.5mm,width=1.5mm]}, thick, dashed] (10) edge [out=180-20,in=20](00);
\end{tikzpicture}
\hspace{1in}
\begin{tikzpicture}[shorten >= 0pt, scale=0.4]
    \node (00) at (0, 0) {$\bullet$};
    \node (10) at (4, 0) {$\bullet$};
    \node (11) at (4, 4) {$\bullet$};
    \node (01) at (0, 4) {$\bullet$};

    \draw[-{Latex[length=1.5mm,width=1.5mm]}, thick, red] (00) edge [out=-20,in=180+20](10);
    \draw[-{Latex[length=1.5mm,width=1.5mm]}, thick, red] (10) edge [out=180-20,in=20](00);
    \draw[-{Latex[length=1.5mm,width=1.5mm]}, thick, red] (11) edge [out=-90+20,in=90-20](10);
    \draw[-{Latex[length=1.5mm,width=1.5mm]}, thick, red] (01) edge [out=-90-20,in=90+20](00);

    \path[-{Latex[length=1.5mm,width=1.5mm]}, thick, dashed] (00) edge (11);
    \path[-{Latex[length=1.5mm,width=1.5mm]}, thick, dashed] (01) edge [out=20,in=180-20](11);
    \path[-{Latex[length=1.5mm,width=1.5mm]}, thick, dashed] (11) edge [out=180+20,in=-20](01);
    \path[-{Latex[length=1.5mm,width=1.5mm]}, thick, dashed] (10) edge (01);
\end{tikzpicture}
    \caption{}
    \label{fig:impossible}
\end{figure}
\end{example}

There are 198 quadratic portrait pairs and 52310 cubic portrait pairs which are impossible in degree at most $2$ and $3$ respectively. We now turn to the following natural problem:

\begin{namedthm}{Obstruction Problem}
Determine whether a combinatorial class $\langle\P,\Q\rangle$ of admissible degree-$d$ portraits is impossible
based on the combinatorics of the portraits $\P$ and $\Q$.
\end{namedthm}

The simplest obstruction stems from Lagrange interpolation.

\begin{prop}[Interpolation Obstruction]
\label{prop interpolation obst}
Suppose that $\P$ and $\Q$ are admissible degree-$d$ portraits on $[n]$. If $\P(i) = \Q(i)$ for at least $d + 1$ elements $i \in [n]$ but $\P \neq \Q$, then $\langle \P, \Q\rangle$ is impossible in degree at most $d$.
\end{prop}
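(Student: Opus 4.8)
The plan is to argue by contradiction, reducing the statement to the uniqueness clause of Lagrange interpolation. Suppose $\langle \P,\Q\rangle$ is possible in degree at most $d$, so that $\wh{\conf}_{\P,d}\cap\wh{\conf}_{\Q,d}\neq\emptyset$. Then there is a configuration $q=(q_1,\ldots,q_n)\in\conf^n$ together with polynomials $f,g\in K[x]$, each of degree at most $d$, such that $f(q_i)=q_{\P(i)}$ and $g(q_i)=q_{\Q(i)}$ for all $i\in[n]$. The goal is to derive $\P=\Q$, contradicting the hypothesis.

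First I would exploit the set $S:=\{i\in[n]:\P(i)=\Q(i)\}$, which by assumption has $|S|\geq d+1$. For each $i\in S$ we have
\[
    f(q_i)=q_{\P(i)}=q_{\Q(i)}=g(q_i),
\]
so the polynomial $f-g$ vanishes at each point $q_i$ with $i\in S$. Since $q\in\conf^n$, the coordinates $q_i$ are pairwise distinct, so $f-g$ has at least $d+1$ distinct roots. As $\deg(f-g)\leq d$, this forces $f-g=0$, i.e.\ $f=g$. (Note that the ``degree at most $d$'' phrasing causes no difficulty here: the witnessing polynomials may have degree strictly less than $d$, but then $\deg(f-g)\leq d$ still holds, which is all that is needed.)

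With $f=g$ in hand, I would evaluate at \emph{all} indices: for every $i\in[n]$,
\[
    q_{\P(i)}=f(q_i)=g(q_i)=q_{\Q(i)}.
\]
Again using that the coordinates of $q$ are pairwise distinct, $q_{\P(i)}=q_{\Q(i)}$ implies $\P(i)=\Q(i)$ for every $i$, so $\P=\Q$. This contradicts the assumption $\P\neq\Q$, completing the proof. Since $\aff_1$ acts on $\conf^n$ without fixed points, emptiness of $\wh{\conf}_{\P,d}\cap\wh{\conf}_{\Q,d}$ is equivalent to $\wh{\M}_{\P,\Q,d}=\emptyset$, which is the desired conclusion.

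There is no genuine technical obstacle in this argument; it is a direct application of the fact that a polynomial of degree at most $d$ is determined by its values at $d+1$ distinct points, used twice (once to conclude $f=g$, once to transfer the equality back to the combinatorics of the portraits). The only point requiring a moment's care is the interplay between the ``degree-at-most-$d$'' realizations and the degree bound on $f-g$, which I have flagged above; admissibility of $\P$ and $\Q$ is not actually needed for the conclusion, though it is the standing hypothesis under which the obstruction problem is posed.
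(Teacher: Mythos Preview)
Your proof is correct and follows essentially the same approach as the paper: both argue by contradiction, use Lagrange interpolation (equivalently, that $f-g$ has too many roots) to force $f=g$, and then derive $\P=\Q$ from the distinctness of the configuration coordinates. Your write-up is slightly more explicit about the step from $q_{\P(i)}=q_{\Q(i)}$ back to $\P(i)=\Q(i)$, but the argument is the same.
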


\begin{proof}
Lagrange interpolation implies that a degree-at-most-$d$ polynomial is uniquely determined by its values on $d + 1$ distinct points. If $f(x)$ and $g(x)$ are degree-at-most-$d$ polynomials realizing $\P$ and $\Q$, respectively, on a configuration $q$, then our assumption implies that $f(q_i) = g(q_i)$ for at least $d + 1$ distinct $i$, hence that $f(x) = g(x)$ as polynomials. But the assumption that $\P \neq \Q$ implies that $f(q_j) \neq g(q_j)$ for some $j$, a contradiction. Hence $\langle \P, \Q\rangle$ is impossible in degree at most $d$.
\end{proof}

There are 39 quadratic portrait pairs and 12140 cubic portrait pairs obstructed by Proposition~\ref{prop interpolation obst}; approximately 20\% and 24\% of the impossible portraits in degree 2 and 3, respectively.

\begin{example}
The following pair of admissible degree-3 portraits 
is obstructed by Proposition~\ref{prop interpolation obst} since the functions agree on $\{1,3,4,6\}$ but disagree on $\{2,5\}$.
    \begin{center}
    \begin{tikzpicture}[shorten >= 0pt, scale=0.4]
    \node (00) at (0, 0) {2};
    \node (10) at (4, 0) {4};
    \node (11) at (4, 4) {3};
    \node (01) at (0, 4) {1};
    \node (20) at (8, 0) {6};
    \node (21) at (8, 4) {5};
    \draw[-{Latex[length=1.5mm,width=2mm]}, thick, red] (00) edge (10);
    \draw[-{Latex[length=1.5mm,width=2mm]}, thick, red] (01) edge [out=-20, in=-160] (11);
    \draw[-{Latex[length=1.5mm,width=2mm]}, thick, red] (11) edge [out=250, in=110] (10);
    \draw[-{Latex[length=1.5mm,width=2mm]}, thick, red] (10) edge [out=-20, in=-160] (20);
    \draw[-{Latex[length=1.5mm,width=2mm]}, thick, red] (20) edge [out=70, in=290] (21);
    \draw[-{Latex[length=1.5mm,width=2mm]}, thick, red] (21) edge [out=0, in=90, looseness = 7] (21);

    \path[-{Latex[length=1.5mm,width=2mm]}, thick, dashed] (00) edge (01);
    \path[-{Latex[length=1.5mm,width=2mm]}, thick, dashed] (01) edge [out=20, in=160] (11);
    \path[-{Latex[length=1.5mm,width=2mm]}, thick, dashed] (10) edge [out=20, in=160] (20);
    \path[-{Latex[length=1.5mm,width=2mm]}, thick, dashed] (11) edge [out=290, in=70] (10);
    \path[-{Latex[length=1.5mm,width=2mm]}, thick, dashed] (20) edge [out=110, in=250] (21);
    \path[-{Latex[length=1.5mm,width=2mm]}, thick, dashed] (21) edge (11);
\end{tikzpicture}
    \end{center}
\end{example}

The obstruction problem is discussed further in Sections \ref{sec:left associate} and \ref{sec:two image}.

\subsection{Left associate realizations}
\label{sec:left associate}

In this section we introduce the restrictive notion of left associate polynomials, show how left associate realizations of portrait pairs may be detected combinatorially, and demonstrate their relation to deviations in the dimension of $\wh{\M}_{\P,\Q,d}$ from the generic expectation.

\begin{defn}
Polynomials $f(x)$ and  $g(x)$ are said to be \emph{left associates} if there exists a linear polynomial $\ell(x)$ such that $f(x) = \ell(g(x))$.
\end{defn}

Left association is a degree-respecting equivalence relation. One immediate and essential property of left associate polynomials is that they have the same fiber partition as self-maps of the affine line.

\begin{defn}
If $F: X \rightarrow X$ is an self-map of a set $X$, then the \emph{fiber partition} of $F$ is the set partition $\Pi_F$ of $X$ defined by 
\[
    \Pi_F := \{F^{-1}(x) : x \in X\}.
\]
\end{defn}

There are several ways to detect combinatorially when realizations of a portrait pair must be left associates. The most robust method, especially in low degree, is via common coincidence pairs. 

\begin{defn}
A \emph{coincidence pair} for a function $F: X \rightarrow Y$ is a pair $x_1, x_2 \in X$ of distinct elements such that $F(x_1) = F(x_2)$.
\end{defn}

We now restate Theorem~\ref{thm intro left associates} using this terminology.

\begin{thm}
\label{thm left associate realizations}
Let $\P$ and $\Q$ be admissible degree-$d$ portraits on $n$ points such that at least one of the following conditions holds:

\begin{enumerate}
    \item $d = 2$ and $\P$, $\Q$ have a common coincidence pair, 
    \item $d = 3$ and $\P$, $\Q$ have two common coincidence pairs, or
    \item $d$ is arbitrary and the fiber partitions $\Pi_\P$, $\Pi_\Q$ share a set with $d$ elements.
\end{enumerate}
If $q$ is a degree-at-most-$d$ realization of both $\P$ and $\Q$ via the polynomials $f$ and $g$, respectively, then $f$ and $g$ are left associates.
\end{thm}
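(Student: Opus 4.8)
The plan is to reduce conditions (1) and (3), together with part of (2), to a single observation about polynomials that share a maximal fiber, and then to settle the genuinely new case hidden in (2) by a direct computation with the difference of the two rescaled realizations.

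First I would isolate the following lemma: \emph{if $f, g$ have degree at most $d$ and there is a set $T$ of $d$ distinct points together with constants $c, c'$ such that $f(t) = c$ and $g(t) = c'$ for every $t \in T$, then $f$ and $g$ are left associates.} This is immediate: $f(x) - c$ and $g(x) - c'$ each acquire $d$ distinct roots, forcing $\deg f = \deg g = d$ and
\[
    f(x) - c = a_f \prod_{t \in T}(x - t), \qquad g(x) - c' = a_g \prod_{t \in T}(x - t),
\]
with $a_f, a_g \neq 0$; eliminating the product gives $f = \ell \circ g$ for $\ell(y) = (a_f/a_g)(y - c') + c$. Condition (3) supplies such a $T$ directly, taking $T = \{q_s : s \in S\}$ for the shared fiber $S$ of size $d$, with $c = q_{\P(s)}$ and $c' = q_{\Q(s)}$. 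Condition (1) is the case $d = 2$: a common coincidence pair $\{i,j\}$ yields $T = \{q_i, q_j\}$ of size $2 = d$. Likewise, in (2) if the two common coincidence pairs share an element they span three points lying in a common fiber of both $\P$ and $\Q$, which by admissibility is a full fiber of size $3 = d$, so we again land in the lemma.

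This leaves the essential case of (2): $d = 3$ and two \emph{disjoint} common coincidence pairs, say points $u_1, u_2$ with $f$-value $\alpha_1$ and $g$-value $\beta_1$, and $u_3, u_4$ with $f$-value $\alpha_2$ and $g$-value $\beta_2$. Admissibility forbids four points from sharing a single fiber, so $\alpha_1 \neq \alpha_2$ and $\beta_1 \neq \beta_2$. Dividing $f, g$ by their leading coefficients $a, b$ to make them monic and setting $h := \tfrac{1}{a} f - \tfrac{1}{b} g$, the claim ``$f, g$ are left associates'' becomes ``$h$ is constant,'' and $\deg h \le 2$ since the cubic terms cancel. Writing each monic realization over its shared value,
\[
    \tfrac{1}{a} f(x) - \tfrac{\alpha_1}{a} = (x - u_1)(x - u_2)(x - r), \qquad \tfrac{1}{b} g(x) - \tfrac{\beta_1}{b} = (x - u_1)(x - u_2)(x - s),
\]
subtraction gives $h(x) = (\text{const}) + (s - r)(x - u_1)(x - u_2)$, and symmetrically $h(x) = (\text{const}') + (s' - r')(x - u_3)(x - u_4)$ from the second pair.

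The crux, and the main obstacle, is that each such expression is either constant or \emph{exactly} quadratic, so $h$ can never have degree $1$; I would then rule out the quadratic case. If $\deg h = 2$, matching leading coefficients forces $s - r = s' - r' \neq 0$, and matching coefficients of $x$ forces $u_1 + u_2 = u_3 + u_4$. But the two monic factorizations of $\tfrac{1}{a}f$ share the same $x^2$-coefficient, so $u_1 + u_2 + r = u_3 + u_4 + r'$, whence $r = r'$; then $\tfrac{1}{a}f(r) = \tfrac{\alpha_1}{a}$ and $\tfrac{1}{a}f(r') = \tfrac{\alpha_2}{a}$ collapse to $\alpha_1 = \alpha_2$, a contradiction. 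Hence $h$ is constant and $f, g$ are left associates. The degenerate possibilities where $f$ or $g$ drops to degree less than $3$ reduce to the lemma: a coincidence pair for a quadratic is a full fiber, two disjoint such fibers force equal fiber-sums, and the same ``coincident third root'' contradiction shows the other realization cannot then be a genuine cubic, so both are quadratics sharing two full fibers.
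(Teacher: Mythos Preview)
Your proof is correct. Cases (1) and (3) are handled exactly as in the paper: a shared fiber of size $d$ forces $f - c$ and $g - c'$ to be scalar multiples, and (1) is the $d=2$ instance of this. The difference is in the disjoint-pairs subcase of (2).

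The paper's route is to prove a general linear-algebra lemma: writing $\delta f(x,y) = (f(x)-f(y))/(x-y)$ and $\rho_d(x,y) = \big(\tfrac{x^k-y^k}{x-y}\big)_{1\le k\le d}$, each common coincidence pair imposes a linear condition on the coefficient vector $(a_1,\ldots,a_d)$ of $f$, with coefficient vector $\rho_d(x_i,y_i)$; if $d-1$ of these are independent, the non-constant part of $f$ is determined up to scaling, hence $f$ and $g$ are left associates. For $d=3$ the paper then checks that $\rho_3(u_1,u_2)$ and $\rho_3(u_3,u_4)$ are dependent only when $\{u_1,u_2\}=\{u_3,u_4\}$, via the identity $x^2+xy+y^2=(x+y)^2-xy$. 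Your argument instead subtracts monic normalizations, notes $h=\tfrac{1}{a}f-\tfrac{1}{b}g$ is constant plus a multiple of $(x-u_1)(x-u_2)$ \emph{and} of $(x-u_3)(x-u_4)$, and derives $u_1+u_2=u_3+u_4$ and then $r=r'$ from the $x^2$-coefficient of $\tfrac{1}{a}f$, forcing $\alpha_1=\alpha_2$. The paper's lemma is more general (it would immediately give the analogous statement for any $d$ with $d-1$ suitably independent coincidence pairs), while your factor-and-compare argument is more elementary and avoids introducing the auxiliary map $\rho_d$; both ultimately hinge on the same symmetric-function coincidence $u_1+u_2=u_3+u_4$. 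Your handling of the degenerate degree-drop case is fine, though you are tacitly reapplying the lemma with degree bound $2$ rather than $3$; it would be worth saying so explicitly.
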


\begin{remark}
Note that condition (1) in the statement of Theorem~\ref{thm left associate realizations} is a special case of condition (3); however, we state (1) separately to emphasize the theme of portraits having common coincidence pairs.
\end{remark}

We first establish a lemma which provides a general test for left associates. Given a polynomial $f(x)$, let $\delta f(x,y)$ be the two-variable symmetric polynomial defined by
\[
    \delta f(x,y) := \frac{f(x) - f(y)}{x - y}.
\]
For each integer $d\geq 1$, let $\rho_d : \AA^2 \rightarrow \AA^d$ be the function defined by
\[
    \rho_d(x,y) := \Big(\frac{x - y}{x - y}, \frac{x^2 - y^2}{x - y}, \ldots, \frac{x^d - y^d}{x - y}\Big).
\]

\begin{lemma}
\label{lemma left associates}
Let $f(x), g(x)$ be degree-at-most-$d$ polynomials. Suppose that $\{x_i, y_i\}$ for $1 \leq i < d$ are $d-1$ pairs of distinct points in $\AA^1$ such that
\begin{enumerate}
    \item $\delta f(x_i,y_i) = \delta g(x_i, y_i) = 0$ for each $i$, and
    \item the $d - 1$ vectors $\rho_d(x_i, y_i)$ with $1 \leq i < d$ are linearly independent.
\end{enumerate}
Then $f$ and $g$ are left associates.
\end{lemma}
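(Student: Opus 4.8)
The plan is to translate the two hypotheses into a statement of linear algebra about the coefficient vectors of $f$ and $g$, and then to read off the left-associate relation directly. Write $f(x) = \sum_{k=0}^d c_k x^k$ and $g(x) = \sum_{k=0}^d c_k' x^k$, allowing the top coefficients to vanish since $f$ and $g$ have degree at most $d$. The computational heart of the argument is the identity
\[
    \delta f(x,y) = \frac{f(x) - f(y)}{x - y}
        = \sum_{k=1}^d c_k\,\frac{x^k - y^k}{x - y}
        = \vec{c}\cdot\rho_d(x,y),
\]
where $\vec{c} := (c_1,\dots,c_d)$ is the vector of positive-degree coefficients of $f$; note that the constant term $c_0$ plays no role. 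Likewise $\delta g(x,y) = \vec{c}\,'\cdot\rho_d(x,y)$ with $\vec{c}\,' := (c_1',\dots,c_d')$.

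Next I would form the $(d-1)\times d$ matrix $M$ whose $i$th row is $\rho_d(x_i,y_i)$. By the identity above, hypothesis (1) says exactly that $(M\vec{c})_i = \delta f(x_i,y_i) = 0$ and $(M\vec{c}\,')_i = \delta g(x_i,y_i) = 0$ for each $i$, i.e.\ both $\vec{c}$ and $\vec{c}\,'$ lie in $\ker M$. Hypothesis (2) says the rows of $M$ are linearly independent, so $\operatorname{rank} M = d-1$ and hence $\dim\ker M = d - (d-1) = 1$. Therefore $\vec{c}$ and $\vec{c}\,'$ lie on a common line through the origin and are linearly dependent.

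Finally I would convert this proportionality into the left-associate relation. In the principal case where $g$ is non-constant we have $\vec{c}\,' \neq 0$, so $\vec{c}\,'$ spans $\ker M$ and $\vec{c} = \lambda \vec{c}\,'$ for a unique scalar $\lambda$; this gives $c_k = \lambda c_k'$ for all $k \ge 1$, whence $f(x) - c_0 = \lambda\bigl(g(x) - c_0'\bigr)$ and thus $f(x) = \ell(g(x))$ with $\ell(x) := \lambda x + (c_0 - \lambda c_0')$. The one delicate point I expect is confirming that $\ell$ is a genuine (invertible) linear polynomial, i.e.\ that $\lambda \neq 0$; this follows from $\vec{c} \neq 0$, that is, from $f$ also being non-constant. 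In the settings where the lemma is applied this is automatic: conditions (1)--(3) of Theorem~\ref{thm left associate realizations} each supply a common coincidence pair, so the realizing polynomials are non-injective and hence have degree at least $2$. The only remaining degenerate case, in which both $f$ and $g$ are constant, is handled trivially by taking $\ell(x) = x + (c_0 - c_0')$.
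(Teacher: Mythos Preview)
Your proof is correct and follows essentially the same approach as the paper: expand $\delta f$ as a dot product of the coefficient vector with $\rho_d$, observe that hypotheses (1) and (2) force the coefficient vectors $(c_1,\dots,c_d)$ of $f$ and $g$ to lie in a common one-dimensional subspace, and then read off the affine relation $f = \ell \circ g$. You are in fact slightly more careful than the paper about the degenerate cases (constant $f$ or $g$, $\lambda = 0$), which the paper's proof glosses over by asserting without comment that the scalar is nonzero.
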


\begin{proof}
If $f(x) = a_0 + a_1x + a_2x^2 + \ldots + a_dx^d$, then
\[
    \delta f(x,y) = \frac{f(x) - f(y)}{x - y}
    = a_1\frac{x - y}{x - y} + a_2 \frac{x^2 - y^2}{x - y} + \ldots + a_d\frac{x^d - y^d}{x - y}.
\]
Hence each point $(x_i, y_i)$ on the curve $\delta f(x,y) = 0$ imposes a linear condition on the vector of coefficients $(a_1, a_2, \ldots, a_d)$. From the definition of $\rho_d(x,y)$ and the assumed independence of $\rho_d(x_i, y_i)$ for $1 \leq i < d$, it follows that the points $(x_i, y_i)$ determine the vector $(a_1, a_2, \ldots, a_d)$ up to a scalar multiple. Thus if $g(x) = b_0 + b_1x + b_2x^2 + \ldots + b_d x^d$, then there is some nonzero scalar $c$ such that
\[
    (a_1, a_2, \ldots, a_d) = c(b_1, b_2, \ldots, b_d).
\]
Let $\ell(x) = cx + (a_0 - cb_0)$. Then $f(x) = \ell(g(x))$, as we wished to show.
\end{proof}

Now we turn to the proof of Theorem \ref{thm left associate realizations}.

\begin{proof}[Proof of Theorem \ref{thm left associate realizations}]
Suppose that $f(x)$ and $g(x)$ are degree-at-most-$d$ polynomials realizing $\langle \P, \Q\rangle$ on some configuration $q$. We show that in each of the three listed cases, $f(x)$ and $g(x)$ are left associates. As we remarked following the statement of Theorem~\ref{thm left associate realizations}, condition (1) is a special case of (3), so we omit the case of condition (1).

Assume that condition (2) holds. First, suppose that $q_1, q_2, q_3, q_4 \in \AA^1$ are points such that $\rho_3(q_1, q_2) = c \rho_3(q_3, q_4)$ for some scalar $c$. Comparing first components we see that $c = 1$, hence
\begin{align*}
    q_1 + q_2 &= q_3 + q_4\\
    q_1^2 + q_1q_2 + q_2^2 &= q_3^2 + q_3q_4 + q_4^2.
\end{align*}
Since $x^2 + xy + y^2 = (x + y)^2 - xy$, it follows that $q_1q_2 = q_3q_4$. The first two elementary symmetric functions of a pair of numbers uniquely determines the pair as a set, hence $\{q_1, q_2\} = \{q_3, q_4\}$. Therefore if $\{q_1, q_2\} \neq \{q_3, q_4\}$ are common coincidence pairs for cubic polynomials $f(x)$ and $g(x)$, then $\rho_3(q_1, q_2)$ and $\rho_3(q_3,q_4)$ are linearly independent and thus $f(x)$ is left associate to $g(x)$ by Lemma \ref{lemma left associates}.

We now assume that (3) holds. If $\Pi_\P$ and $\Pi_\Q$ share a part with $d$ elements, then $f^{-1}(q_i) = g^{-1}(q_j)$ for some $i$ and $j$. Thus $f(x) - q_i$ and $g(x) - q_j$ have the same roots with multiplicity. Unique factorization implies the existence of some $c\neq 0$ such that $f(x) - q_i = c(g(x) - q_j)$. If we let $\ell(x) := cx + (q_i - cq_j)$, then $f(x) = \ell(g(x))$.
\end{proof}

We apply Theorem \ref{thm left associate realizations} in Section \ref{sec:obstructions} to identify several obstructions to the realizations of quadratic and cubic portraits, and in Section \ref{sec:conditional dimension} we show how Theorem \ref{thm left associate realizations} leads to a conditional dimension heuristic that explains most of the unexpectedly large dimensions of $\wh{\M}_{\P,\Q,d}$ for degrees $d = 2, 3$.

\subsubsection{Obstructions}
\label{sec:obstructions}

Suppose $\{\P,\Q\}$ is a pair of admissible degree-$d$ portraits with distinct fiber partitions $\Pi_\P \neq \Pi_\Q$ and which satisfies any of the conditions in Theorem \ref{thm left associate realizations}. We claim that $\langle \P,\Q\rangle$ is an impossible portrait pair. Indeed, if $f(x)$, $g(x)$ is a pair of degree-at-most-$d$ polynomials realizing $\P$ and $\Q$ on some configuration $q$, then Theorem~\ref{thm left associate realizations} tells us that $f$ and $g$ are left associates, and it follows that $f(x)$ and $g(x)$ must have the same fiber partition. In particular, this implies that $\P$ and $\Q$ must have the same fiber partition, a contradiction. Hence $\langle \P, \Q \rangle$ is impossible in degree at most $d$. We call this the \emph{coincidence pair obstruction}.

The coincidence pair obstruction accounts for 67 (approximately 34\%) of the 198 quadratic portrait pairs which are impossible in degree at most 2 and accounts for 32573 (approximately 62\%) of the 52310 cubic portrait pairs which are impossible in degree at most 3. This is the most common known obstruction in degrees 2 and 3.

\begin{example}
The following cubic portrait pair $\langle\P,\Q\rangle$ illustrates the coincidence pair obstruction. Let $\P$ be the solid red portrait and let $\Q$ be the dashed black portrait.
\begin{center}
    \begin{tikzpicture}[shorten >= -2pt, scale=0.4]
    \node (00) at (0, 0) {2};
    \node (10) at (4, 0) {4};
    \node (11) at (4, 4) {3};
    \node (01) at (0, 4) {1};
    \node (20) at (8, 0) {6};
    \node (21) at (8, 4) {5};
    \draw[-{Latex[length=1.4mm,width=1.5mm]}, thick, dashed] (00) edge (10);
    \draw[-{Latex[length=1.4mm,width=1.5mm]}, thick, dashed] (01) edge [out=300, in=150] (10);
    \draw[-{Latex[length=1.4mm,width=1.5mm]}, thick, dashed] (11) edge (01);
    \draw[-{Latex[length=1.4mm,width=1.5mm]}, thick, dashed] (10) edge [out=120, in=-30] (01);
    \draw[-{Latex[length=1.4mm,width=1.5mm]}, thick, dashed] (20) edge (10);
    \draw[-{Latex[length=1.4mm,width=1.5mm]}, thick, dashed] (21) edge [out=90, in=0, looseness = 7] (21);

    \path[-{Latex[length=1.4mm,width=1.5mm]}, thick, red] (00) edge [out=270, in=180, looseness = 7] (00);
    \path[-{Latex[length=1.4mm,width=1.5mm]}, thick, red] (01) edge (00);
    \path[-{Latex[length=1.4mm,width=1.5mm]}, thick, red] (10) edge (21);
    \path[-{Latex[length=1.4mm,width=1.5mm]}, thick, red] (11) edge [out=-20, in=200] (21);
    \path[-{Latex[length=1.4mm,width=1.5mm]}, thick, red] (20) edge (21);
    \path[-{Latex[length=1.4mm,width=1.5mm]}, thick, red] (21) edge [out=160, in=20] (11);
\end{tikzpicture}
\end{center}
Then $\{1,2\}$ and $\{3,4\}$ are common coincidence pairs for $\P$ and $\Q$, but
\[
    \Pi_\P = \{\{1,2\}, \{3,4,6\}, \{5\}\} \neq \{\{1,2,6\}, \{3,4\},\{5\}\} = \Pi_\Q.
\]
Hence Theorem \ref{thm left associate realizations} implies that $\langle \P, \Q\rangle$ is impossible in degree at most $3$.
\end{example}

If we know that realizations of $\langle\P,\Q\rangle$ must be left associates, we may determine a partial portrait for the function $\L$ which realizes the left equivalence $\Q = \L \circ \P$. The portrait below illustrates this with $\P$ represented by the solid red portrait, $\Q$ represented by the dashed black portrait, and $\L$ represented by the dotted blue portrait. Note that $\L$ is only defined on the points in the image of $\P$.

\begin{center}
    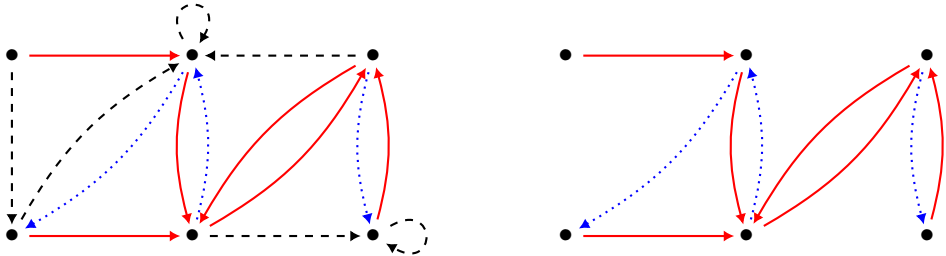
\begin{figure}[h]
\begin{tikzpicture}[shorten >= -2pt, scale=0.4]
    \node (00) at (0, 0){$\bullet$};
    \node (10) at (6, 0){$\bullet$};
    \node (11) at (6, 6){$\bullet$};
    \node (01) at (0, 6){$\bullet$};
    \node (20) at (12, 0){$\bullet$};
    \node (21) at (12, 6){$\bullet$};
    
    \draw[-{Latex[length=1.4mm,width=1.5mm]}, thick, dashed] (00) edge [out=45+15,in=45+180-15](11);
    \draw[-{Latex[length=1.4mm,width=1.5mm]}, thick, dashed] (01) edge (00);
    \draw[-{Latex[length=1.4mm,width=1.5mm]}, thick, dashed] (11) edge [out=90+30, in=90-30,looseness=7](11);
    \draw[-{Latex[length=1.4mm,width=1.5mm]}, thick, dashed] (10) edge (20);
    \draw[-{Latex[length=1.4mm,width=1.5mm]}, thick, dashed] (20) edge [out=30,in=-30,looseness=7](20);
    \draw[-{Latex[length=1.4mm,width=1.5mm]}, thick, dashed] (21) edge (11);

    \path[-{Latex[length=1.4mm,width=1.5mm]}, thick, red] (00) edge (10);
    \path[-{Latex[length=1.4mm,width=1.5mm]}, thick, red] (01) edge (11);
    \path[-{Latex[length=1.4mm,width=1.5mm]}, thick, red] (10) edge [out=45-15,in=45+180+15](21);
    \path[-{Latex[length=1.4mm,width=1.5mm]}, thick, red] (11) edge [out=-90-15,in=90+15](10);
    \path[-{Latex[length=1.4mm,width=1.5mm]}, thick, red] (20) edge [out=90-15,in=-90+15](21);
    \path[-{Latex[length=1.4mm,width=1.5mm]}, thick, red] (21) edge [out=45+180-15,in=45+15](10);
    
    \path[-{Latex[length=1.4mm,width=1.5mm]}, thick, dotted, blue] (11) edge [out=45+180+15,in=45-15] (00);
    \path[-{Latex[length=1.4mm,width=1.5mm]}, thick, dotted, blue] (10) edge [out=90-15,in=-90+15](11);
    \path[-{Latex[length=1.4mm,width=1.5mm]}, thick, dotted, blue] (21) edge[out=-90-15,in=90+15](20);
\end{tikzpicture}
\hspace{.5in}
\begin{tikzpicture}[shorten >= -2pt, scale=0.4]
    \node (00) at (0, 0){$\bullet$};
    \node (10) at (6, 0){$\bullet$};
    \node (11) at (6, 6){$\bullet$};
    \node (01) at (0, 6){$\bullet$};
    \node (20) at (12, 0){$\bullet$};
    \node (21) at (12, 6){$\bullet$};
    
    \draw[-{Latex[length=1.4mm,width=1.5mm]}, thick, white] (00) edge [out=45+15,in=45+180-15](11);
    \draw[-{Latex[length=1.4mm,width=1.5mm]}, thick, white] (01) edge (00);
    \draw[-{Latex[length=1.4mm,width=1.5mm]}, thick, white] (11) edge [out=90+30, in=90-30,looseness=7](11);
    \draw[-{Latex[length=1.4mm,width=1.5mm]}, thick, white] (10) edge (20);
    \draw[-{Latex[length=1.4mm,width=1.5mm]}, thick, white] (20) edge [out=30,in=-30,looseness=7](20);
    \draw[-{Latex[length=1.4mm,width=1.5mm]}, thick, white] (21) edge (11);

    \path[-{Latex[length=1.4mm,width=1.5mm]}, thick, red] (00) edge (10);
    \path[-{Latex[length=1.4mm,width=1.5mm]}, thick, red] (01) edge (11);
    \path[-{Latex[length=1.4mm,width=1.5mm]}, thick, red] (10) edge [out=45-15,in=45+180+15](21);
    \path[-{Latex[length=1.4mm,width=1.5mm]}, thick, red] (11) edge [out=-90-15,in=90+15](10);
    \path[-{Latex[length=1.4mm,width=1.5mm]}, thick, red] (20) edge [out=90-15,in=-90+15](21);
    \path[-{Latex[length=1.4mm,width=1.5mm]}, thick, red] (21) edge [out=45+180-15,in=45+15](10);
    
    \path[-{Latex[length=1.4mm,width=1.5mm]}, thick, dotted, blue] (11) edge [out=45+180+15,in=45-15] (00);
    \path[-{Latex[length=1.4mm,width=1.5mm]}, thick, dotted, blue] (10) edge [out=90-15,in=-90+15](11);
    \path[-{Latex[length=1.4mm,width=1.5mm]}, thick, dotted, blue] (21) edge[out=-90-15,in=90+15](20);
\end{tikzpicture}
\caption{On the left, a cubic portrait pair $\langle\P,\Q\rangle$ whose realizations must be left associates. On the right, a simplified representation of the first portrait, emphasizing $\P$ and $\L$ rather than $\P$ and $\Q$.}
\end{figure}
\end{center}

There are several more specialized obstructions we may identify in this setting coming from the fact that linear polynomials are determined by two values. We illustrate some of these obstructions in the following examples. In each case we consider an admissible degree-3 portrait $\P$ and a partial portrait $\L$ defined on the image of $\P$ and define $\Q := \L \circ \P$.

\begin{example}
If $\L$ has two fixed points, then any linear realization of $\L$ must be the identity $\ell(x) = x$. Hence if $\L$ fixes at least two but not all points, then $\langle \P, \Q\rangle$ is impossible in degree at most 3. Note that this occurs whenever realizations of $\P$ and $\Q$ are necessarily left associates and $\P(i) = \Q(i)$ for at least two values of $i$. For example, this obstructs the following pair $\{\P,\L\}$.
\begin{center}
    \begin{tikzpicture}[shorten >= -2pt, scale=0.4]
    \node (00) at (0, 0){$\bullet$};
    \node (10) at (4, 0){$\bullet$};
    \node (11) at (4, 4){$\bullet$};
    \node (01) at (0, 4){$\bullet$};
    \node (20) at (8, 0){$\bullet$};
    \node (21) at (8, 4){$\bullet$};

    \path[-{Latex[length=1.4mm,width=2mm]}, thick, red] (00) edge (01);
    \path[-{Latex[length=1.4mm,width=2mm]}, thick, red] (01) edge (11);
    \path[-{Latex[length=1.4mm,width=2mm]}, thick, red] (10) edge (11);
    \path[-{Latex[length=1.4mm,width=2mm]}, thick, red] (11) edge [out=90+30,in=90-30,looseness=7](11);
    \path[-{Latex[length=1.4mm,width=2mm]}, thick, red] (20) edge (10);
    \path[-{Latex[length=1.4mm,width=2mm]}, thick, red] (21) edge [out=90+30,in=90-30,looseness=7](21);
    
    \path[-{Latex[length=1.4mm,width=2mm]}, thick, dotted, blue] (11) edge (21);
    \path[-{Latex[length=1.4mm,width=2mm]}, thick, dotted, blue] (10) edge [out=-90+30,in=-90-30,looseness=7](10);
    \path[-{Latex[length=1.4mm,width=2mm]}, thick, dotted, blue] (01) edge [out=90+30,in=90-30,looseness=7](01);
    \path[-{Latex[length=1.4mm,width=2mm]}, thick, dotted, blue] (21) edge (20);
    \path[-{Latex[length=1.4mm,width=2mm]}, thick, dotted, blue] (21) edge (20);
\end{tikzpicture}
\end{center}
\end{example}

\begin{example}
A variation on the previous example: if $\L$ has a 2-cycle, then any linear realization $\ell(x)$ of $\L$ must be an involution. Thus if $\L$ is not an involution, then $\langle \P, \Q\rangle$ is impossible in degree at most 3, as in the following example.
\begin{center}
    \begin{tikzpicture}[shorten >= -2pt, scale=0.4]
    \node (00) at (0, 0){$\bullet$};
    \node (10) at (4, 0){$\bullet$};
    \node (11) at (4, 4){$\bullet$};
    \node (01) at (0, 4){$\bullet$};
    \node (20) at (8, 0){$\bullet$};
    \node (21) at (8, 4){$\bullet$};

    \path[-{Latex[length=1.4mm,width=1.5mm]}, thick, red] (00) edge (10);
    \path[-{Latex[length=1.4mm,width=1.5mm]}, thick, red] (01) edge (11);
    \path[-{Latex[length=1.4mm,width=1.5mm]}, thick, red] (10) edge (20);
    \path[-{Latex[length=1.4mm,width=1.5mm]}, thick, red] (11) edge (21);
    \path[-{Latex[length=1.4mm,width=1.5mm]}, thick, red] (20) edge (21);
    \path[-{Latex[length=1.4mm,width=1.5mm]}, thick, red] (21) edge [out=90+30,in=90-30,looseness=7](21);
    
    \path[-{Latex[length=1.4mm,width=1.5mm]}, thick, dotted, blue] (11) edge (10);
    \path[-{Latex[length=1.4mm,width=1.5mm]}, thick, dotted, blue] (10) edge (01);
    \path[-{Latex[length=1.4mm,width=1.5mm]}, thick, dotted, blue] (21) edge [out=-90+40,in=90-40](20);
    \path[-{Latex[length=1.4mm,width=1.5mm]}, thick, dotted, blue] (20) edge [out=90+40,in=-90-40](21);
\end{tikzpicture}
\end{center}
\end{example}

\begin{example}
Consider the following portrait pair $\{\P,\L\}$,
\begin{center}
    \begin{tikzpicture}[shorten >= -2pt, scale=0.4]
    \node (00) at (0, 0){2};
    \node (10) at (4, 0){4};
    \node (11) at (4, 4){3};
    \node (01) at (0, 4){1};
    \node (20) at (8, 0){6};
    \node (21) at (8, 4){5};

    \path[-{Latex[length=1.4mm,width=1.5mm]}, thick, red] (00) edge (11);
    \path[-{Latex[length=1.4mm,width=1.5mm]}, thick, red] (01) edge (11);
    \path[-{Latex[length=1.4mm,width=1.5mm]}, thick, red] (10) edge [out=90+15,in=-90-15](11);
    \path[-{Latex[length=1.4mm,width=1.5mm]}, thick, red] (11) edge [out=-90+15,in=90-15](10);
    \path[-{Latex[length=1.4mm,width=1.5mm]}, thick, red] (20) edge (10);
    \path[-{Latex[length=1.4mm,width=1.5mm]}, thick, red] (21) edge (20);
    
    \path[-{Latex[length=1.4mm,width=1.5mm]}, thick, dotted, blue] (11) edge [out=-45+15,in=-45+180-15](20);
    \path[-{Latex[length=1.4mm,width=1.5mm]}, thick, dotted, blue] (20) edge [out=-45+180+15,in=-45-15](11);
    \path[-{Latex[length=1.4mm,width=1.5mm]}, thick, dotted, blue] (00) edge [out=-15,in=180+15](10);
    \path[-{Latex[length=1.4mm,width=1.5mm]}, thick, dotted, blue] (10) edge [out=180-15,in=15](00);
    
\end{tikzpicture}
\end{center}
As discussed in the previous example, any linear realization of $\L$ must be a non-trivial involution. Notice that $\P(\L(i)) = \P(i)$ for the 4 points $i \in \{2,3,4,6\}$. If $f(x)$ and $\ell(x)$ are degree-at-most-3 and degree-1 realizations of $\P$ and $\L$, respectively, then the identity $f(\ell(x)) = f(x)$ holds identically by Lagrange interpolation. However, since $\ell(x)$ acts freely on a generic fiber of $f(x)$, the degree of $f(x)$ must be even. On the other hand, $\P$ is not admissible for any $d < 3$ since the point $3$ has three distinct preimages under $\P$. This contradiction implies that $\langle \P, \Q\rangle$ is impossible in degree at most 3.
\end{example}

\begin{example}
In this example, $\L$ has a 4-cycle.
\begin{center}
    \begin{tikzpicture}[shorten >= -2pt, scale=0.4]
    \node (10) at (4, 0){4};
    \node (11) at (4, 4){2};
    \node (01) at (0, 4){1};
    \node (20) at (8, 0){5};
    \node (21) at (8, 4){3};
    \node (30) at (12, 0){6};

    \path[-{Latex[length=1.4mm,width=1.5mm]}, thick, red] (01) edge (11);
    \path[-{Latex[length=1.4mm,width=1.5mm]}, thick, red] (10) edge (11);
    \path[-{Latex[length=1.4mm,width=1.5mm]}, thick, red] (11) edge (21);
    \path[-{Latex[length=1.4mm,width=1.5mm]}, thick, red] (21) edge (10);
    \path[-{Latex[length=1.4mm,width=1.5mm]}, thick, red] (20) edge (11);
    \path[-{Latex[length=1.4mm,width=1.5mm]}, thick, red] (30) edge (20);
    
    \path[-{Latex[length=1.4mm,width=1.5mm]}, thick, dotted, blue] (10) edge [out=90+30,in=-90-30](11);
    \path[-{Latex[length=1.4mm,width=1.5mm]}, thick, dotted, blue] (11) edge [out=30,in=180-30](21);
    \path[-{Latex[length=1.4mm,width=1.5mm]}, thick, dotted, blue] (21) edge [out=-90+30,in=90-30](20);
    \path[-{Latex[length=1.4mm,width=1.5mm]}, thick, dotted, blue] (20) edge [out=180+30,in=-30](10);
    
\end{tikzpicture}
\end{center}
If $\ell(x)$ is a linear realization of $\L$ over a field of characteristic different from $2$, then $\ell(x)$ is affine equivalent to $ix$ where $i^2 = -1$, and up to an affine change of coordinates we may assume that the 4-cycle of $\ell(x)$ is $\mu_4 := \{1, i, -1, -i\}$. The restriction of $\P$ to this 4-cycle happens to be the portrait for one of the exceptional quadratic endomorphisms of $\mu_4$ (see Proposition \ref{prop:4th_rou} below). Lagrange interpolation then implies that any degree-at-most-$3$ realization $f(x)$ of $\P$ must be quadratic. However, $\P$ is not an admissible degree-2 portrait since 2 has three pre-images under the map $\P$.
\end{example}

The subtlety of the obstructions in these examples suggests that it is unreasonable to expect a complete solution of the general obstruction problem. However, as we saw with the interpolation obstruction and the coincidence pair obstruction, there are some simple conditions that account for the majority of the impossible portraits in degrees 2 and 3.

\begin{question}
For $d \geq 4$, what are the most common obstructions to realizations of pairs of admissible degree-$d$ portraits on $[2d]$ points?
\end{question}

In Section \ref{sec:two image} we discuss one more obstruction related to the special family of two-image portraits.

\subsubsection{Conditional dimension heuristic}
\label{sec:conditional dimension}
Suppose that $\P$ and $\Q$ are admissible degree-$d$ portraits on $n > d$ points with the same fiber partition for which we know that that any pair of degree-at-most-$d$ polynomials realizing $\P$ and $\Q$, respectively, must be left associates. In this situation a different dimension heuristic for $\wh{\M}_{\P,\Q,d}$ applies. 

The data of a pair of degree-at-most-$d$ polynomials $f(x)$ and $g(x)$ realizing $\P$ and $\Q$ is, in this case, equivalent to the data of a degree-at-most-$d$ polynomial $f(x)$ and a linear polynomial $\ell(x)$ such that $g(x) = \ell(f(x))$. Let $m$ be the number of points in the image of $\P$, which is necessarily the same as that of $\Q$. Thus a point in $\wh{\M}_{\P,\Q,d}$ is specified by $n$ parameters subject to $n - d - 1$ constraints from interpolating $f(x)$ in degree $d$ and $m - 2$ constraints from interpolating $\ell(x)$ in degree 1, modulo the action of the two-dimensional group $\aff_1$. Hence the expected dimension of $\wh{\M}_{\P,\Q,d}$ is
\[
    n - (n - d - 1) - (m - 2) - 2 = d - m + 1.
\]

\begin{example}
Suppose that $\langle \P, \Q \rangle$ is a cubic portrait pair with at least 2 common coincidence pairs and the same fiber partition. 
Then Theorem \ref{thm left associate realizations} implies that any pair of realizations of $\langle \P, \Q \rangle$ must be left associates. Thus we expect the dimension of $\wh{\M}_{\P,\Q,3}$ to be $4 - m$. There are 6964 cubic portrait pairs satisfying these conditions; Table \ref{tab:conditional_dim} shows the number of such cubic portrait pairs with a given dimension and image size $m$. The bold entries denote the expected dimension.

\begin{table}[h]
    \caption{For $2 \le m \le 4$ and $-1 \le n \le 2$, the number of cubic portrait pairs $\langle \P,\Q\rangle$ with at least $2$ coincidence pairs and $m$ image points satisfying $\dim \Mhat_{\P,\Q,3} = n$.}
    \label{tab:conditional_dim}
    \begin{tabular}{c||cccc}
        & \multicolumn{4}{c}{$\dim \wh{\M}_{\P,\Q,3}$}\\\hline
        $m$    & $-1$   & $0$    & $1$   & $2$ \\\hline
        $2$ & $0$    & $0$    & $0$   & $\textbf{18}$\\
        $3$ & $98$   & $0$    & $\textbf{818}$ & $0$\\
        $4$ & $1536$ & $\textbf{4421}$ & $73$  & $0$
    \end{tabular}
\end{table}

Comparing Table \ref{tab:dim_deg2_and3} and Table \ref{tab:conditional_dim} we see that the conditional dimension heuristic accounts for the majority of cubic portrait pairs for which $\dim \wh{\M}_{\P,\Q,3} = 1$ and all of those for which $\dim \wh{\M}_{\P,\Q,3} = 2$.
\end{example}

\subsection{Two-image portraits}
\label{sec:two image}

In this section we study another exceptional family of portraits.

\begin{defn}
A \emph{two-image portrait} $\P$ is an admissible degree-$d$ portrait on $2d$ points such that $|\P([2d])| = 2$. That is, $\Pi_\P$ consists of two sets with $d$ elements each.
\end{defn}

There are three combinatorial types of two-image portraits $\P$ in each degree $d$, determined by their action on the points in the image of $\P$. Examples of these three types in the degree 3 case are illustrated below.

\begin{center}
    \begin{tikzpicture}[shorten >= -2pt, scale=0.3]
    \node (00) at (0, 0) {$\bullet$};
    \node (02) at (0, 4) {$\bullet$};
    \node (11) at (3, 2) {$\bullet$};
    \node (21) at (9, 2) {$\bullet$};
    \node (30) at (12, 0) {$\bullet$};
    \node (32) at (12, 4) {$\bullet$};
    \draw[-{Latex[length=1.4mm,width=1.5mm]}, thick, red] (00) edge (11);
    \draw[-{Latex[length=1.4mm,width=1.5mm]}, thick, red] (02) edge (11);
    \draw[-{Latex[length=1.4mm,width=1.5mm]}, thick, red] (11) edge [out=-45,in=45+180](21);
    
    \draw[-{Latex[length=1.4mm,width=1.5mm]}, thick, red] (30) edge (21);
    \draw[-{Latex[length=1.4mm,width=1.5mm]}, thick, red] (32) edge (21);
    \draw[-{Latex[length=1.4mm,width=1.5mm]}, thick, red] (21) edge [out=-45+180,in=45](11);
\end{tikzpicture}
\hspace{.2in}
\begin{tikzpicture}[shorten >= -2pt, scale=0.3]
    \node (00) at (0, 0) {$\bullet$};
    \node (02) at (0, 4) {$\bullet$};
    \node (11) at (3, 2) {$\bullet$};
    \node (21) at (9, 2) {$\bullet$};
    \node (30) at (12, 0) {$\bullet$};
    \node (32) at (12, 4) {$\bullet$};
    \draw[-{Latex[length=1.4mm,width=1.5mm]}, thick, red] (00) edge (11);
    \draw[-{Latex[length=1.4mm,width=1.5mm]}, thick, red] (02) edge (11);
    \draw[-{Latex[length=1.4mm,width=1.5mm]}, thick, red] (11) edge [out=-45,in=45,looseness=4](11);
    
    \draw[-{Latex[length=1.4mm,width=1.5mm]}, thick, red] (30) edge (21);
    \draw[-{Latex[length=1.4mm,width=1.5mm]}, thick, red] (32) edge (21);
    \draw[-{Latex[length=1.4mm,width=1.5mm]}, thick, red] (21) edge [out=-45+180,in=45+180,looseness=4](21);
\end{tikzpicture}
\hspace{.2in}
\begin{tikzpicture}[shorten >= -2pt, scale=0.3]
    \node (00) at (0, 0) {$\bullet$};
    \node (01) at (0, 2) {$\bullet$};
    \node (02) at (0, 4) {$\bullet$};
    \node (11) at (4, 2) {$\bullet$};
    \node (21) at (8, 2) {$\bullet$};
    \node (31) at (12, 2) {$\bullet$};
    \draw[-{Latex[length=1.4mm,width=1.5mm]}, thick, red] (00) edge (11);
    \draw[-{Latex[length=1.4mm,width=1.5mm]}, thick, red] (01) edge (11);
    \draw[-{Latex[length=1.4mm,width=1.5mm]}, thick, red] (02) edge (11);
    \draw[-{Latex[length=1.4mm,width=1.5mm]}, thick, red] (11) edge (21);
    
    \draw[-{Latex[length=1.4mm,width=1.5mm]}, thick, red] (31) edge (21);
    \draw[-{Latex[length=1.4mm,width=1.5mm]}, thick, red] (21) edge [out=45,in=90+45,looseness=5](21);
\end{tikzpicture}
\end{center}

Theorem \ref{thm 2 image} shows that the degree-$d$ realization space of a two-image portrait $\P$ depends only on the fiber partition $\Pi_\P$ of $\P$. As we will see, the special properties of \emph{two}-image portraits all stem from the fact that $\aff_1$ acts sharply \emph{two}-transitively on the affine line. Note that if $h$ is a symmetric function with integral coefficients and $q$ is a multiset of elements of a field $K$, then $h(q)$ has a well-defined value. 

\begin{thm}
\label{thm 2 image}
Let $d\geq 1$ and let $\Pi = \{A,B\}$ be a partition of $[2d]$ into two sets with $d$ elements.
\begin{enumerate}
    \item If $\P$ and $\Q$ are two-image portraits on $[2d]$ with the same fiber partition $\Pi_{\P} = \Pi_{\Q} = \Pi$, then
    \[
        \wh{\conf}_{\P,d} = \conf_{\P,d} = \conf_{\Q,d} = \wh{\conf}_{\Q,d}.
    \]
    Hence $\conf_{\P,d}$ depends only on the partition $\Pi$, and we let $\conf_{\Pi} := \conf_{\P,d} = \conf_{\Q,d}$ denote this common subspace of $\conf^{2d}$.
    
    \item $\conf_{\Pi}$ is the ($d + 1$)-dimensional Zariski closed subspace of $\conf^{2d}$ defined by the equations
    \[
        e_k(x_A) = e_k(x_B)
    \]
    for $1 \leq k < d$, where $e_k$ is the $k$th elementary symmetric function in $d$ variables, and $x_A, x_B$ are the subsets of the set of indeterminates $\{x_1, x_2, \ldots, x_{2d}\}$ indexed by the elements of $A$ and $B$ respectively.
    
    \item For each configuration $q \in \conf_\Pi$ there exist at least $2d(2d - 1)$ distinct degree-$d$ polynomials $f(x)$ such that $f(q) \subseteq q$.
\end{enumerate}
\end{thm}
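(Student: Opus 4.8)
The plan is to reduce all three parts to a single computation describing how a two-image portrait constrains a realizing polynomial. Fix a two-image portrait $\P$ with distinct image points $i_A, i_B \in [2d]$, so that $A = \P^{-1}(i_A)$ and $B = \P^{-1}(i_B)$ are the two fibers, each of size $d$. Given $q \in \conf^{2d}$, write $P_A(x) := \prod_{a \in A}(x - q_a)$ and $P_B(x) := \prod_{b \in B}(x - q_b)$. I would first prove the key characterization: a degree-$d$ polynomial $f$ realizes $\P$ on $q$ if and only if $f(x) - q_{i_A} = c\,P_A(x)$ and $f(x) - q_{i_B} = c\,P_B(x)$ for a common leading coefficient $c \ne 0$. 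Indeed, since $\deg f = d$ and the $d$ distinct points $q_a$ ($a \in A$) are all sent to $q_{i_A}$, they are exactly the roots of $f - q_{i_A}$, forcing $f - q_{i_A} = c\,P_A$, and likewise for $B$ with the same $c$. Subtracting, a realization exists only if $P_A(x) - P_B(x) = (q_{i_B} - q_{i_A})/c$ is constant, which by comparing the coefficients of $x^{d-k}$ is equivalent to $e_k(q_A) = e_k(q_B)$ for $1 \le k < d$.

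Conversely, assuming those identities hold, $\kappa := P_A - P_B$ is a constant; evaluating at any $q_b$ with $b \in B$ gives $\kappa = P_A(q_b) = \prod_{a\in A}(q_b - q_a) \ne 0$ since the coordinates of $q$ are distinct. Then $f(x) := q_{i_A} + \tfrac{q_{i_B} - q_{i_A}}{\kappa}P_A(x)$ is a genuine realization of $\P$, of degree exactly $d$ because $q_{i_A} \ne q_{i_B}$. Since this criterion never refers to how $\P$ acts on its image, $\conf_{\P,d}$ depends only on $\Pi$, which is part~(1); and because a fiber has $d$ points while a degree-$e$ polynomial has fibers of size at most $e$, a two-image portrait admits no realization of degree $< d$, giving $\conf_{\P,d} = \wh{\conf}_{\P,d}$.

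For part~(2), the characterization shows set-theoretically that $\conf_\Pi$ is exactly the locus in $\conf^{2d}$ cut out by $e_k(x_A) = e_k(x_B)$ for $1 \le k < d$, which is Zariski closed. For the dimension I would invoke Proposition~\ref{prop admissible}: a two-image portrait is admissible by definition and lives on $n = 2d \ge d+1$ points, so $\dim \conf_{\P,d} = \min\{d+1, 2d\} = d+1$. For part~(3), fix $q \in \conf_\Pi$ so that $\kappa = P_A - P_B$ is a nonzero constant, and for each ordered pair $(s,t)$ of distinct indices in $[2d]$ set $f_{s,t}(x) := q_s + \tfrac{q_t - q_s}{\kappa}P_A(x)$. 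The same computation shows $f_{s,t}$ has degree exactly $d$, sends every $q_a$ ($a \in A$) to $q_s$ and every $q_b$ ($b \in B$) to $q_t$, so $f_{s,t}(q) \subseteq q$; distinct pairs yield distinct polynomials because $q_s = f_{s,t}(q_a)$ and $q_t = f_{s,t}(q_b)$ are recoverable from $f_{s,t}$. There are $2d(2d-1)$ such pairs, giving the bound.

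The main obstacle is the characterization step, and specifically the two positivity/nonvanishing checks inside it: that $\kappa \ne 0$ (so the constant difference $P_A - P_B$ is invertible in the converse construction) and that the resulting leading coefficient is nonzero (so that $f$ has degree exactly $d$ rather than dropping degree). Once these are in place, parts~(1) and~(2) are immediate readings of the criterion together with Proposition~\ref{prop admissible}, and the endomorphism count in~(3) is a direct application of the same formula to all ordered pairs of targets.
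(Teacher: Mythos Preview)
Your proof is correct and follows essentially the same line as the paper's: both arguments hinge on the factorization $f(x) - q_{i_A} = c\,P_A(x)$, which forces the elementary symmetric function identities and makes the realization space depend only on $\Pi$; your explicit family $f_{s,t}$ in part~(3) is exactly the paper's ``post-compose with the linear map sending $(q_{i_A},q_{i_B})$ to $(q_s,q_t)$'' written out in coordinates. The one notable divergence is the dimension count in part~(2): the paper builds an explicit injective morphism $\iota \colon \conf_\Pi \to \AA^{d+1}$ (sending $q$ to the coefficient vector of the unique degree-$d$ polynomial with $q_A \mapsto 0$, $q_B \mapsto 1$) whose image is the dense open locus of polynomials for which $0,1$ are not critical values, whereas you invoke Proposition~\ref{prop admissible}. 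Your shortcut is fine in spirit but note that Proposition~\ref{prop admissible} is stated only for $d \ge 2$ and in characteristic~$0$, so the direct $\iota$ argument is slightly more self-contained and general.
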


\begin{proof}
(1) First note that since $\P$ contains a fiber with $d$ elements, $\P$ is not admissible for any degree less than $d$. Hence $\wh{\conf}_{\P,d} = \conf_{\P,d}$. Now suppose that $q \in \conf_{\P,d}$, and let $f(x)$ be the degree-$d$ polynomial realizing $\P$. Without loss of generality, suppose that $\im(\P) = \{1,2\}$ and $\im(\Q) = \{i,j\}$. Furthermore, since $\Pi_\P = \Pi_\Q = \{A,B\}$ we may suppose that $\P^{-1}(1) = \Q^{-1}(i) = A$ and $\P^{-1}(2) = \Q^{-1}(j) = B$. Let $\ell(x)$ be the unique linear polynomial such that $\ell(q_1) = q_i$ and $\ell(q_2) = q_j$. Then $g := \ell\circ f$ is a degree-$d$ polynomial such that for each $a \in A$,
\[
    g(q_a) = \ell(f(q_a))= \ell(q_1) = q_i,
\]
and similarly, $g(q_b) = q_j$ for each $b \in B$. Hence $g(x)$ realizes $\Q$ on the configuration $q$, which is to say that $q \in \conf_{\Q,d}$. Thus $\conf_{\P,d} \subseteq \conf_{\Q,d}$, and the reverse inclusion follows by symmetry.

(2) Let $\P : [2d] \to [2d]$ be a portrait with fiber partition $\Pi = \{A,B\}$, and let $i := \P(A)$ and $j := \P(B)$. By construction, $\P$ is admissible in degree $d$. If $q \in \conf_{\Pi} = \conf_{\P,d}$, and $f$ is a degree-$d$ polynomial realizing $\P$, then $f(x) - q_i$ and $f(x) - q_j$ vanish at the elements of $q_A$ and $q_B$ respectively. Comparing coefficients we conclude that $e_k(q_A) = e_k(q_B)$ for all $1 \leq k < d$. 

Conversely, if $q = (q_A, q_B)$ is a configuration such that $e_k(q_A) = e_k(q_B)$ for $1 \leq k < d$, then the polynomial
\[
    f(x) := x^d - e_1(q_A)x^{d-1} + e_2(q_A)x^{d-2} + \ldots + (-1)^{d-1}e_{d-1}(q_A)x
\]
is constant on each of $q_A$ and $q_B$. Let $a$ and $b$ be the images of $q_A$ and $q_B$ under $f$, respectively, and let $\ell$ be the unique linear polynomial such that $\ell(a) = q_i$ and $\ell(b) = q_j$. Then $\ell \circ f$ maps $q_A$ to $q_i$ and $q_B$ to $q_j$, thus $q$ is in $\conf_{\P,d} = \conf_\Pi$. Therefore, $\conf_{\Pi}$ is equal to the space defined by the equations $e_k(x_A) = e_k(x_B)$ for $1 \leq k < d$.

Given $q := (q_A, q_B) \in \conf_\Pi$, let $f(x) = b_d x^d + \ldots + b_1 x + b_0$ be the unique degree-$d$ polynomial such that $f(q_A) = 0$ and $f(q_B) = 1$. (The first condition determines the roots, hence determines the coefficients up to scaling; the second condition determines the leading coefficient.) Now consider the map $\iota: \conf_\Pi \rightarrow \AA^{d + 1}$ defined by $\iota(q) = (b_0, b_1,\ldots, b_d)$.
Lagrange interpolation implies that $\iota$ is an injective morphism;
moreover, $\iota$ maps surjectively onto the dense open subset of $\AA^{d+1}$ corresponding to all degree-$d$ polynomials for which $0$ and $1$ are not critical values. Therefore, $\dim \conf_\Pi = d + 1$.

(3) Given the partition $\Pi$, a two-image portrait $\P$ with fiber partition $\Pi$ is determined by choosing the ordered pair of values $(\P(A), \P(B))$. Hence there are $2d(2d - 1)$ such portraits. Part (1) shows that $\conf_{\P,d} = \conf_\Pi$ for any such portrait $\P$. Therefore, given a configuration $q \in \conf_\Pi$, there exist at least $2d(2d - 1)$ distinct degree-$d$ polynomials $f(x)$ such that $f(q) \subseteq q$.
\end{proof}

\begin{remark}
The proof of part (1) of Theorem~\ref{thm 2 image} generalizes directly to rational functions and three-image portraits on $3d$ points: For a portrait $\P : [n] \to [n]$, let $\conf_{\P,d}(\wh{K})$ denote the space of all configurations $q$ of $n$ distinct points on the projective line $\wh{K} := \PP^1(K)$ for which there is a degree-$d$ rational function $f$ realizing the portrait $\P$ on $q$. Then, if $\P, \Q$ are three-image portraits on $3d$ points with the same fiber partition, we have that
\[
    \conf_{\P,d}(\wh{K}) = \conf_{\Q,d}(\wh{K}).
\]
The main point is that the automorphism group of the affine line is sharply 2-transitive, whereas the automorphism group of the projective line is sharply 3-transitive.
\end{remark}

Continuing with the notation of Theorem \ref{thm 2 image}, let $\M_{\Pi,d}$ be the space defined by
\[
    \M_{\Pi,d} := \conf_{\Pi,d}/\aff_1.
\]
The following corollary is an immediate consequence of Theorem \ref{thm 2 image}.

\begin{cor}\label{cor:two-image}
Let $\P$ and $\Q$ be two-image portraits on $[2d]$ with the same fiber partition. Then
    \[
        \wh{\M}_{\P,\Q,d} = \M_{\P,\Q,d} = \M_{\Pi,d}
        \quad\text{and}\quad
    \dim \M_{\Pi,d} = d - 1.
    \]
\end{cor}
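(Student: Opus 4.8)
The plan is to read off the entire statement from Theorem \ref{thm 2 image}, so no new ideas are needed; the work is purely bookkeeping with the definitions of the various moduli spaces and one dimension count.

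First I would invoke Theorem \ref{thm 2 image}(1). Since $\P$ and $\Q$ are two-image portraits sharing the common fiber partition $\Pi$, part (1) yields the chain of equalities
\[
    \wh{\conf}_{\P,d} = \conf_{\P,d} = \conf_{\Q,d} = \wh{\conf}_{\Q,d} = \conf_{\Pi,d}.
\]
In particular, intersecting gives $\conf_{\P,d} \cap \conf_{\Q,d} = \conf_{\Pi,d}$ and likewise $\wh{\conf}_{\P,d} \cap \wh{\conf}_{\Q,d} = \conf_{\Pi,d}$, since in each case we are intersecting a space with itself.

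Next I would pass to the $\aff_1$-quotients. By the definitions recorded at the start of Section \ref{sec:data}, together with the definition $\M_{\Pi,d} := \conf_{\Pi,d}/\aff_1$, the previous step gives
\[
    \M_{\P,\Q,d} = (\conf_{\P,d} \cap \conf_{\Q,d})/\aff_1 = \conf_{\Pi,d}/\aff_1 = \M_{\Pi,d},
\]
and the identical computation with hats replaces $\M_{\P,\Q,d}$ by $\wh{\M}_{\P,\Q,d}$, establishing $\wh{\M}_{\P,\Q,d} = \M_{\P,\Q,d} = \M_{\Pi,d}$.

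Finally, for the dimension I would combine Theorem \ref{thm 2 image}(2), which asserts $\dim \conf_{\Pi,d} = d+1$, with the general principle from Section \ref{sec:remark_on_moduli} that taking the quotient by the faithful two-dimensional action of $\aff_1$ drops the dimension by $2$. The only point requiring a moment's care is that this dimension count needs the $\aff_1$-action to have trivial stabilizers, which holds on $\conf^{2d}$ precisely because $2d \geq 2$ (guaranteed by $d \geq 1$); this is exactly the sharp $2$-transitivity argument already used for $\M_{\P,d}$. Thus $\dim \M_{\Pi,d} = (d+1) - 2 = d - 1$, completing the proof. There is essentially no obstacle here, as the substance has already been absorbed into Theorem \ref{thm 2 image}; the corollary is genuinely a reformulation.
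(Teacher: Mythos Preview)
Your proposal is correct and matches the paper's approach: the paper simply declares the corollary to be ``an immediate consequence of Theorem \ref{thm 2 image}'' without writing out details, and your bookkeeping with Theorem \ref{thm 2 image}(1)--(2) together with the $\aff_1$-quotient dimension drop from Section \ref{sec:remark_on_moduli} is exactly the intended unpacking.
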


Since any two partitions of $[2d]$ into two $d$-element sets are related by a permutation, it follows from Proposition~\ref{prop:conjugation_isomorphism} and Corollary~\ref{cor:two-image} that the isomorphism class of $\M_{\Pi,d}$ depends only on the degree $d$ and not the partition $\Pi$.

\begin{example}
If $d = 2$ and $\Pi := \{\{1,2\},\{3,4\}\}$, then Theorem \ref{thm 2 image} (2) implies that
\[
    \M_{\Pi,2} = \left\{(0,1,q_3, q_4) \in \conf^4 : q_3 + q_4 = 1\right\},
\]
so $\M_{\Pi,2}$ is a line minus the points at which $0$, $1$, $q_3$, and $q_4$ are not pairwise distinct.
In particular, $\M_{\Pi,2}$ has infinitely many rational points. Of the 14 quadratic portrait pairs $\langle \P, \Q\rangle$ with $\dim \wh{\M}_{\P,\Q,2} = 1$, all but one is isomorphic to $\M_{\Pi,2}$ (see Example \ref{ex:exceptional_quad}). 
\end{example}

\begin{example}
If $d = 3$ and $\Pi := \{\{1,2,3\}, \{4,5,6\}\}$, then Theorem \ref{thm 2 image} (2) tells us that 
\begin{equation}\label{eq:P2ex}
    \M_{\Pi,3} = \left\{(0,1,q_3,q_4,q_5,q_6) \in \conf^6 : 1 + q_3 = q_4 + q_5 + q_6 \text{ and } q_3 = q_4q_5 + q_4q_6 + q_5q_6\right\}.
\end{equation}
Since \eqref{eq:P2ex} defines a quadric surface, $\M_{\Pi,3}$ is birational over $\overline{\QQ}$ to $\PP^2$. Furthermore, since this model of $\M_{\Pi,3}$ has a $\QQ$-rational point---take $(q_1, q_2, q_3, q_4, q_5, q_6) = (0, 1, -4, -2, 2, -3)$, for example---it follows that $\M_{\Pi,3}$ is birational over $\QQ$ to $\PP^2$. In particular, $\M_{\Pi,3}$ has infinitely many rational points.

For all 18 of the cubic portrait pairs $\langle \P, \Q\rangle$ with $\dim \wh{\M}_{\P,\Q,3} = 2$, the space $\Mhat_{\P,\Q,3}$ is isomorphic to the surface $\M_{\Pi,3}$.
\end{example}

Two-image portraits exhibit a remarkable rigidity: Theorem \ref{thm 2 image} tells us that two-image portraits $\P$ and $\Q$ with the same fiber partition have identical realization spaces. On the other hand, we now show that two-image portraits with different fiber partitions have disjoint realization spaces. 

\begin{prop}[Two-Image Obstruction]
\label{prop two image obstruction}
Suppose that $\P$ and $\Q$ are two-image portraits on $2d$ points. If $\Pi_\P \neq \Pi_\Q$, then, in characteristic zero, $\langle \P, \Q\rangle$ is impossible in degree at most $d$.
\end{prop}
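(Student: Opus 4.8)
The plan is to deduce this from Theorem~\ref{thm polynomial URS} (the polynomial unique-range-set statement) together with the structural description of two-image realization spaces in Theorem~\ref{thm 2 image}. Write $\Pi_\P = \{A,B\}$ and $\Pi_\Q = \{A',B'\}$, each a partition of $[2d]$ into two $d$-element sets, and recall that since each of these fiber partitions contains a part of size $d$, neither $\P$ nor $\Q$ is admissible in any degree below $d$; hence $\wh{\conf}_{\P,d} = \conf_{\P,d}$ and likewise for $\Q$. So it suffices to prove $\conf_{\P,d}(K) \cap \conf_{\Q,d}(K) = \emptyset$, which by Theorem~\ref{thm 2 image}(1) is the same as $\conf_{\Pi_\P}(K) \cap \conf_{\Pi_\Q}(K) = \emptyset$.

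First I would argue by contradiction: suppose $q = (q_1,\ldots,q_{2d})$ lies in the intersection. Using the normalization from the proof of Theorem~\ref{thm 2 image}(2), there is a degree-$d$ polynomial $f$ realizing $\P$ with $f \equiv 0$ on $q_A$ and $f\equiv 1$ on $q_B$, and a degree-$d$ polynomial $g$ realizing $\Q$ with $g\equiv 0$ on $q_{A'}$ and $g\equiv 1$ on $q_{B'}$. Because the coordinates $q_i$ are distinct and $\deg f = d = |A| = |B|$, the points $\{q_a : a\in A\}$ are exactly the $d$ simple roots of $f$ and the points $\{q_b : b\in B\}$ are exactly the $d$ simple roots of $f-1$. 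Consequently, as sets with multiplicity,
\[
    f^{-1}(\{0,1\}) = \{q_1,\ldots,q_{2d}\} = g^{-1}(\{0,1\}),
\]
every point occurring with multiplicity one; the same computation applied to $g$ gives the right-hand equality.

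Finally I would invoke Theorem~\ref{thm polynomial URS}, which forces either $f = g$ or $f = 1-g$. If $f=g$, then comparing zero sets gives $q_A = q_{A'}$, and distinctness of the $q_i$ upgrades this to $A = A'$, so $\Pi_\P = \Pi_\Q$. If $f = 1-g$, then $f^{-1}(0) = g^{-1}(1)$ and $f^{-1}(1) = g^{-1}(0)$ give $A = B'$ and $B = A'$, so again $\{A,B\} = \{A',B'\}$ and $\Pi_\P = \Pi_\Q$. Either outcome contradicts $\Pi_\P \neq \Pi_\Q$, so the intersection is empty and $\langle\P,\Q\rangle$ is impossible in degree at most $d$. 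The genuinely hard step is the input Theorem~\ref{thm polynomial URS}; once that is granted, the remaining work is just the normalization and the observation that distinct coordinates let us recover index sets from preimage sets. Accordingly, I expect no real obstacle in the deduction itself, and would concentrate effort on confirming that the normalization of both $f$ and $g$ to the common target pair $\{0,1\}$ is legitimate, which is exactly where the sharp $2$-transitivity of $\aff_1$ (already used in Theorem~\ref{thm 2 image}) enters.
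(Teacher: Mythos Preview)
Your proof is correct and follows essentially the same route as the paper's: both arguments normalize so that the two realizing polynomials have $\{0,1\}$ as their common two-point image, then invoke Theorem~\ref{thm polynomial URS} to force $f=g$ or $f=1-g$, hence $\Pi_\P=\Pi_\Q$. The only cosmetic difference is that the paper post-composes the given realizations with linear maps $\ell_f,\ell_g$ to land in $\{0,1\}$, whereas you first pass to $\conf_{\Pi_\P}\cap\conf_{\Pi_\Q}$ via Theorem~\ref{thm 2 image}(1) and then pick the normalized representatives directly; note that your $f$ then realizes a two-image portrait with fiber partition $\Pi_\P$ rather than $\P$ itself, but since you already reduced to the partition spaces this is harmless.
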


\begin{remark}
Note that Theorem \ref{thm 2 image} and Proposition \ref{prop two image obstruction} combine to give Theorem \ref{thm intro 2 image}.
\end{remark}

The two-image obstruction is relatively rare: there are 24 quadratic portrait pairs and 91 cubic portrait pairs obstructed by Proposition \ref{prop two image obstruction}. We deduce Proposition \ref{prop two image obstruction} from the following result. 

\begin{thm}
\label{thm polynomial URS}
Let $K$ be a field of characteristic $0$, and suppose that $f(x), g(x) \in K[x]$ are polynomials such that
\[
    f^{-1}(\{0,1\}) = g^{-1}(\{0,1\})
\]
as sets with multiplicity. Then either $f(x) = g(x)$ or $f(x) = 1 - g(x)$.
\end{thm}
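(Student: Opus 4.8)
The plan is to turn the multiset condition into a polynomial identity and then complete the square. The hypothesis says exactly that the polynomials $f(x)\bigl(f(x)-1\bigr)$ and $g(x)\bigl(g(x)-1\bigr)$ have the same roots with the same multiplicities. Two polynomials with identical zero divisors differ by a nonzero constant factor, so there is some $c \in K^\times$ with
\[
    f(x)\bigl(f(x)-1\bigr) = c\, g(x)\bigl(g(x)-1\bigr).
\]
Comparing leading terms forces $\deg f = \deg g$, and this identity shows $f$ and $g$ are either both constant or both nonconstant. The constant case is degenerate (both fibers are empty), so I would assume from now on that $f$, and hence $g$, is nonconstant, which is also the only case relevant to the application to Proposition~\ref{prop two image obstruction}.

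Next I would complete the square on both sides, which is legitimate since $\operatorname{char} K = 0$. Setting $u := f - \tfrac12$ and $v := g - \tfrac12$, so that $\deg u = \deg v = \deg f \ge 1$, the displayed identity becomes
\[
    u^2 - \tfrac14 = c\bigl(v^2 - \tfrac14\bigr), \qquad\text{so}\qquad u^2 - c\,v^2 = \frac{1-c}{4}.
\]
This is a ``norm equals constant'' equation, and the whole theorem now reduces to showing that it can only hold with $c = 1$.

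The crux is therefore to prove $c = 1$. Passing to $\overline{K}$, I would write $c = \gamma^2$ with $\gamma \neq 0$ and factor the left-hand side:
\[
    (u - \gamma v)(u + \gamma v) = \frac{1-c}{4}.
\]
If the right-hand side were a nonzero constant, this would express it as a product of two polynomials in the integral domain $\overline{K}[x]$, forcing both factors to have degree $0$; but then $u = \tfrac12\bigl((u-\gamma v)+(u+\gamma v)\bigr)$ would be constant, contradicting $\deg u \ge 1$. Hence $\tfrac{1-c}{4} = 0$, i.e. $c = 1$. With $c = 1$ the identity reads $u^2 = v^2$, so $(u-v)(u+v) = 0$ in $K[x]$ and thus $u = v$ or $u = -v$; the first gives $f = g$ and the second gives $f - \tfrac12 = \tfrac12 - g$, i.e. $f = 1 - g$, completing the proof.

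The main obstacle is the step forcing $c = 1$: one must rule out genuinely different solutions of $u^2 - c\,v^2 = \text{const}$, and the key observation is that completing the square converts the problem into factoring a constant in a polynomial ring, where a nonzero constant product is impossible with nonconstant factors. Working over $\overline{K}$ lets me treat all values of $c$ uniformly regardless of whether $c$ is a square in $K$, and the only arithmetic input needed is $\operatorname{char} K \neq 2$ (to complete the square), which the characteristic-$0$ hypothesis amply supplies.
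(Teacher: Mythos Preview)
Your proof is correct and takes a genuinely different route from the paper's.

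The paper's argument is combinatorial: it decomposes the four multisets $f^{-1}(0)$, $f^{-1}(1)$, $g^{-1}(0)$, $g^{-1}(1)$ into overlapping pieces $r_1, r_2, s_1, s_2$, uses the Newton--Girard identities to pass from elementary symmetric functions to power sums, exploits the additivity of power sums under disjoint union to obtain $p_k(r_1) = p_k(r_2)$ and $p_k(s_1) = p_k(s_2)$ for $1 \le k < d$, and then invokes the fact that a multiset of size strictly less than $d$ is determined by its first $d-1$ power sums (this last step is where characteristic $0$ is genuinely used).

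Your approach is purely algebraic: translate the hypothesis into the single identity $f(f-1) = c\,g(g-1)$, complete the square, and factor over $\overline{K}$ to force $c = 1$. This is shorter, avoids all symmetric-function machinery, and in fact only requires $\operatorname{char} K \neq 2$ rather than characteristic $0$, so your argument actually proves a slightly stronger statement than the paper claims. What the paper's approach buys is that it stays over $K$ throughout and makes the overlap structure of the fibers explicit via the $r_i, s_i$ decomposition, which could be a useful template if one wanted to generalize to preimages of sets with more than two elements; your factorization trick is special to the quadratic expression $f(f-1)$.
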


\begin{remark}
By two-transitivity of $\aff_1$, the set $\{0,1\}$ in Theorem \ref{thm polynomial URS} may be replaced by any set $\{a,b\}$ of two distinct elements, though the conclusion must then be adjusted accordingly. We work with $\{0,1\}$ for concreteness.
\end{remark}

\begin{proof}[Proof of Theorem~\ref{thm polynomial URS}]
Let $q := f^{-1}(\{0,1\}) = g^{-1}(\{0,1\})$ as a multiset (set with multiplicities). 
If $f(x)$ has degree $d$, then $|q| = 2d$ and $g(x)$ must also have degree $d$. Let $q_{f,0}$, $q_{f,1}$ be the multisets $q_{f,0} := f^{-1}(0)$ and $q_{f,1} := f^{-1}(1)$, and define $q_{g,0}$, $q_{g,1}$ similarly for $g(x)$. Then 
\[
    q_{f,0} \cap q_{f,1} = q_{g,0} \cap q_{g,1} = \emptyset\hspace{.3in}
    \text{ and }\hspace{.3in}
    q_{f,0} \cup q_{f,1} = q = q_{g,0} \cup q_{g,1}.
\]
Let $r_1, r_2, s_1, s_2$ be the unique disjoint multisets such that
\[
    q_{f,0} = r_1 \sqcup s_1 \hspace{.3in} q_{f,1} = r_2 \sqcup s_2 \hspace{.3in}
    q_{g,0} = r_1 \sqcup s_2 \hspace{.3in} q_{g,1} = r_2 \sqcup s_1.
\]
Since $q_{f,0}$ and $q_{f,1}$ are the multisets of roots of $f(x)$ and $f(x) - 1$ respectively, it follows that $e_k(q_{f,0}) = e_k(q_{f,1})$ for $1\leq k < d$, where $e_k$ is the $k$th elementary symmetric function in $d$ variables. The Newton-Girard identities imply that the $k$th power sum symmetric function
    \[
        p_k(x_1,\ldots,x_d) := x_1^k + x_2^k + \ldots + x_d^k
    \]
may be expressed as an integral polynomial in the first $k$ elementary symmetric functions. Hence $p_k(q_{f,0}) = p_k(q_{f,1})$ for $1\leq k < d$. The same argument applied to $g(x)$ implies that $p_k(q_{g,0}) = p_k(q_{g,1})$ for $1\leq k < d$. Note that if $q = r \sqcup s$ for multisets $q, r, s$, then
$
    p_k(q) = p_k(r) + p_k(s).
$
Thus,
\begin{align*}
    p_k(r_1) + p_k(s_1) = p_k(q_{f,0}) &= p_k(q_{f,1}) = p_k(r_2) + p_k(s_2)\\
    p_k(r_1) + p_k(s_2) = p_k(q_{g,0}) &= p_k(q_{g,1}) = p_k(r_2) + p_k(s_1),
\end{align*}
for $1\leq k < d$. The above identities imply that $p_k(r_1) = p_k(r_2)$ and $p_k(s_1) = p_k(s_2)$
for $1 \leq k < d$. Since $r_1, r_2, s_1, s_2$ are multisets with strictly fewer than $d$ elements it follows that $r_1 = r_2$ and $s_1 = s_2$. (Note that this is where we are using our assumption that $K$ has characteristic $0$.)

Thus either $r_i = \emptyset$ or $s_i = \emptyset$, which is equivalent to either $(q_{f,0},q_{f,1}) = (q_{g,0},q_{g,1})$ or $(q_{f,0},q_{f,1}) = (q_{g,1},q_{g,0})$. Polynomials with two common fibers are equal, hence $f(x) = g(x)$ or $f(x) = 1 - g(x)$.
\end{proof}

\begin{remark}
A \emph{unique range set} for a family $\F$ of meromorphic functions is a set $S$ such that if $f^{-1}(S) = g^{-1}(S)$ as sets with multiplicity for $f, g \in \F$, then $f = g$. Theorem \ref{thm polynomial URS} implies that $\{0,1\}$ is as close to being a unique range set for the family $\F = \CC[x]$ as a set with two elements can be. See Chen \cite[Question 1.2]{chen} and the discussion that follows for a survey of results related to unique range sets.
\end{remark}

\begin{proof}[Proof of Proposition \ref{prop two image obstruction}]
We prove the contrapostive. Suppose that $q \in \confhat_{\P,d}(K) \cap \confhat_{\Q,d}(K)$, and let $f(x), g(x) \in K[x]$ be degree-at-most-$d$ polynomials realizing $\P$ and $\Q$ on $q$. Since $\P$ and $\Q$ are two-image portraits, there exist linear polynomials $\ell_f(x), \ell_g(x) \in K[x]$ such that $\ell_f(f(q)) = \ell_g(g(q)) = \{0,1\}$. Since $q$ contains $2d$ points, it follows that
\[
    (\ell_f \circ f)^{-1}(\{0,1\}) = (\ell_g \circ g)^{-1}(\{0,1\}) = q.
\]
Hence $\ell_f(f(x)) = \ell_g(g(x))$ or $\ell_f(f(x)) = 1 - \ell_g(g(x))$ by Theorem \ref{thm polynomial URS}. In either case, $f$ is left associate to $g$. Thus $f$ and $g$ have the same fiber partition on $K$; in particular, $\Pi_\P = \Pi_\Q$.
\end{proof}

\begin{remark}
Proposition~\ref{prop two image obstruction} may fail in positive characteristic. Let $d = 2$, and consider the pair of portraits $\{\P,\Q\}$ illustrated in Figure~\ref{fig:2img_counterex}. Each of $\P$ and $\Q$ is a two-image portrait, but their fiber partitions $\Pi_\P = \{\{1,2\},\{3,4\}\}$ and $\Pi_\Q = \{\{1,3\},\{2,4\}\}$ are different, so it follows from Proposition~\ref{prop two image obstruction} that $\langle \P,\Q\rangle$ has no realizations of degree at most $2$ in characteristic zero. However, $\langle \P,\Q\rangle$ does have realizations in characteristic $2$: Indeed, let $q = (0, 1, \omega, \omega^2) \in \conf^4(\FF_4)$, where $\omega$ is a root of $x^2 + x + 1 \in \FF_2[x]$. Then $q$ realizes $\P$ via $f(x) = \omega^2x^2 + \omega^2x$ and $\Q$ via $g(x) = x^2 + \omega x + \omega$.
\end{remark}

\begin{figure}
    \begin{tikzpicture}[shorten >= -2pt, scale=0.75]
    \node (0) at (0, 2) {$1$};
    \node (1) at (0, 0) {$2$};
    \node (w) at (2, 2) {$3$};
    \node (w2) at (2, 0) {$4$};
    \draw[-{Latex[length=1.4mm,width=2mm]}, thick, red] (0) edge [out=90, in=180, looseness=7] (0);
    \draw[-{Latex[length=1.4mm,width=2mm]}, thick, red] (1) edge (0);
    \draw[-{Latex[length=1.4mm,width=2mm]}, thick, red] (w) edge (w2);
    \draw[-{Latex[length=1.4mm,width=2mm]}, thick, red] (w2) edge [out=270, in=0, looseness=7] (w2);
    \draw[-{Latex[length=1.4mm,width=2mm]}, thick, dashed] (0) edge (w);
    \draw[-{Latex[length=1.4mm,width=2mm]}, thick, dashed] (1) edge [out=180, in=270, looseness=7] (1);
    \draw[-{Latex[length=1.4mm,width=2mm]}, thick, dashed] (w) edge [out=0, in=90, looseness=7] (w);
    \draw[-{Latex[length=1.4mm,width=2mm]}, thick, dashed] (w2) edge (1);
\end{tikzpicture}
    \caption{A pair $\{\P,\Q\}$ of two-image portraits with degree-at-most-$2$ realizations in characteristic $2$ but not in characteristic $0$. The portraits $\P$ and $\Q$ are indicated by solid red and dashed black arrows, respectively.}
    \label{fig:2img_counterex}
\end{figure}
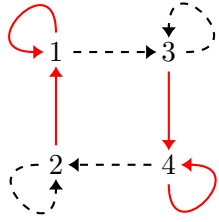

\begin{remark}
Unlike Theorem \ref{thm 2 image}, Proposition \ref{prop two image obstruction} cannot be directly generalized to rational realizations of three-image portraits: Consider the family of rational functions $f_t(x)$ with $t \in \CC$ defined by
\[
    f_t(x) = -\frac{x^2 - tx}{tx - 1}.
\]
If $t \neq \pm 1$, then $f_t$ is a degree-2 rational map with fixed points at $0$, $1$, and $\infty$. Each fixed point has an additional preimage: the non-fixed preimages of $0$, $1$, and $\infty$ are $t$, $-1$, and $1/t$, respectively. Thus, if $t \notin \{0, \pm 1\}$, then $f_t$ is an endomorphism of the configuration of six distinct points given by
\[
    q_t := (0, 1, \infty, t, -1, 1/t) \in \conf^6,
\]
with fiber partition $\Pi_{f_t} = \big\{\{0,t\},\{-1,1\},\{\infty, 1/t\}\big\}$.
If $t\ne \pm 1$, then $q_{1/t}$ and $q_t$ are different as configurations but equal as sets; hence $f_{1/t}$ is a degree-2 rational endomorphism of $q_t$ with a fiber partition different from that of $f_t$. Thus if $\P$ and $\Q$ are the three-image portraits corresponding to the action of $f_t$ and $f_{1/t}$ on $q_t$, then $\P$ and $\Q$ have distinct fiber partitions, yet
\[
    q_t \in \conf_{\P,2}(\wh{\CC}) \cap \conf_{\Q,2}(\wh{\CC}).
\]
\end{remark}

\section{Endomorphism semigroups of roots of unity}
\label{sec:endos}

Recall from the introduction that $\endo(q)$ is the semigroup of polynomial endomorphisms of a configuration $q \in \conf^n$; that is, polynomials $f(x)$ such that $f(q) \subseteq q$. If $d \geq 0$, then $\endo_d(q)$ is the degree-$d$ graded component of $\endo(q)$. In Question \ref{quest:intro max cardinality} we asked for the maximal cardinality $E_{n,d}$ of $\endo_d(q)$ as $q$ varies over $\conf^n$ and $0 \leq d < n - 1$. Theorem \ref{thm 2 image}(3) gives us the first nontrivial lower bound of
\[
    E_{2d,d} \geq 2d(2d - 1)
\]
coming from any configuration supporting a two-image portrait. Our survey of all quadratic pairs of portraits on 4 points gives the following determination of $E_{4,2}$.

\begin{prop}
\label{prop:4th_rou}
The maximal cardinality of $\endo_2(q)$ as $q$ varies over $\conf^4(\CC)$ is $E_{4,2} = 28$. This cardinality is achieved by a unique affine equivalence class of configurations, namely the class represented by $q = \mu_4 := (1,i,-1,-i)$, the 4th roots of unity.
\end{prop}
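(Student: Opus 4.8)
The plan is to first turn the counting problem into a combinatorial one and then exploit the rigidity of degree-$2$ maps. Since $n = 4 > 2 = d$, Lagrange interpolation (as in the proof of Proposition \ref{prop admissible}) shows that a degree-$2$ endomorphism of $q$ is determined by the portrait $\P : [4] \to [4]$ it induces; hence $|\endo_2(q)|$ equals the number of portraits realized by $q$ in degree exactly $2$. Writing $f(x) = a(x-h)^2 + k$ with $a \ne 0$, the coincidences are governed entirely by the \emph{vertex sum} $v := 2h$: for $i \ne j$ one has $f(q_i) = f(q_j)$ if and only if $q_i + q_j = v$. Every degree-$2$ endomorphism therefore falls into exactly one of three types according to how many pairs of $q$ share the sum $v$: the \emph{injective} type (no coincident pair, so $f$ permutes $q$), the \emph{collapse} type $2+1+1$ (exactly one coincident pair), and the \emph{two-image} type $2+2$ (two complementary coincident pairs).

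I would establish achievability by decomposing $\endo_2(\mu_4)$ along these three types. For $\mu_4 = (1,i,-1,-i)$ the only balanced pairing is $\{1,-1\},\{i,-i\}$, both summing to $0$, so the two-image maps are exactly those with vertex sum $v=0$; by Theorem \ref{thm 2 image}(3) there are $2d(2d-1) = 12$ of them (for instance the monomials $i^k x^2$ and their affine normalizations). The four remaining pair-sums $1\pm i,\ -1\pm i$ each occur once, and for each the collapse-type maps with that vertex sum correspond to nondegenerate affine maps carrying the three distinct squared values $(q_i - v/2)^2$ into $\mu_4$; a short computation gives $4$ such maps, and by the rotational symmetry $x \mapsto i^k x$ (which permutes the four vertices $(\pm 1 \pm i)/2$ and sends collapse-maps to collapse-maps) all four vertices contribute equally, for $16$ in total. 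Finally, no degree-$2$ polynomial permutes $\mu_4$, so the injective type contributes $0$. Hence $|\endo_2(\mu_4)| = 12 + 16 + 0 = 28$. Equivalently, one checks that the single linear consistency condition on the image tuple $(f(1),f(-1),f(i),f(-i)) \in \mu_4^4$ cuts out $36$ degree-at-most-$2$ endomorphisms, from which one subtracts the $4$ constants and $4$ rotations.

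For the upper bound and uniqueness I would bound each type for an arbitrary $q \in \conf^4(\CC)$. The key elementary input is that $q$ admits \emph{at most one} balanced pairing: if $q_1+q_2 = q_3+q_4$ and $q_1+q_3 = q_2+q_4$, then subtracting gives $q_2 = q_3$, a contradiction. Thus the two-image type contributes either $0$ or exactly $12$ (Theorem \ref{thm 2 image}), and it remains to bound the collapse and injective contributions: for each unbalanced pair-sum $v$ the collapse count is controlled by a single affine-ratio invariant of $q$ (the relation among the three values $(q_i - v/2)^2$), while the injective contribution counts degree-$2$ self-bijections of $q$. The main obstacle is precisely to show that the total never exceeds $28$, with equality only in the affine class of $\mu_4$: this is a finite but delicate optimization over the coincidence patterns of the six pair-sums of $q$, in which one must show that achieving the maximum forces the vanishing sums and equal affine ratios that characterize $\mu_4$ up to $\aff_1$. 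This is exactly the step discharged by the exhaustive survey of all $780$ quadratic portrait pairs; I would either invoke that survey or finish the case analysis by hand, the latter being the genuinely laborious part of the argument.
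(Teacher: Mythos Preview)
Your proposal is correct and lands on the same computational endgame as the paper, but the road to the lower bound is genuinely different and worth noting.

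For achievability, the paper simply exhibits three seed maps $g_1 = x^2$, $g_2 = \tfrac{i+1}{2}(x^2+i)$, $g_3 = \tfrac{1}{2}(x^2+(i-1)x+i)$ and counts the orbit under pre- and post-composition by $i^k x$, getting $4+8+16 = 28$. Your vertex-sum trichotomy (two-image / collapse / injective) is more structural: it explains \emph{why} the count splits as $12+16+0$, and your alternative check via the single linear constraint $y_1+iy_2=y_3+iy_4$ on $\mu_4^4$ (yielding $36$, minus $4$ constants and $4$ rotations) is a clean independent confirmation that the paper does not give. Incidentally, that $36$-count is what actually certifies your assertion that the injective type contributes $0$; as stated on its own, ``no degree-$2$ polynomial permutes $\mu_4$'' is not obvious.

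For the upper bound and uniqueness, you and the paper do the same thing. The paper argues that once $|\endo_2(q)|>12$ the relevant intersection of realization spaces is $0$-dimensional, then sweeps the finitely many candidates arising from the survey of all $780$ quadratic pairs. You add the pleasant observation that a $4$-point configuration admits at most one balanced pairing (so the two-image stratum contributes at most $12$), but as you acknowledge, this does not by itself bound the collapse and injective strata, and you ultimately invoke the same survey. So the ``genuinely laborious part'' you flag is exactly the part the paper also discharges computationally rather than by hand; neither argument is survey-free at that step.
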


\begin{proof}
Let $q \in \conf^4(\CC)$ be a configuration with $m := |\endo_2(q)|$ quadratic endomorphisms. Then there are portraits $\P_1, \P_2, \ldots, \P_m$ on $[4]$ such that $q \in \bigcap_{i=1}^m \conf_{\P_i,2}.$ If $m$ is sufficiently large, then $\bigcap_{i=1}^m \conf_{\P_i,2}/\aff_1$ is 0-dimensional, and a computation shows that $m > 12$ suffices. (Note that by Theorem \ref{thm 2 image}, 12 quadratic two-image portraits with the same fiber partition have identical realization spaces.) In that case, the affine equivalence class of $q$ belongs to one of the finitely many portrait moduli spaces $\M_{\P,\Q,2}$ of dimension 0, hence we have finitely many candidates for configurations $q$ maximizing $|\endo_2(q)|$. For each of these candidates $q$, one may compute the degree of the unique degree-at-most-$2$ realization (if it exists) of each of the $4^4$ portraits on $q$. This procedure yields the configuration $q = \mu_4 = (1,i,-1,-i)$ as the unique affine equivalence class of complex configurations maximizing $|\endo_2(q)|$, with a total of 28 quadratic endomorphisms, which we construct explicitly below. We note that the configuration $q = (0, 1, 1 + \sqrt{2}, 2 + \sqrt{2})$ comes in second place with 20 quadratic endomorphisms.

One may check that the three polynomials
\[
    g_1(x) := x^2,
    \quad g_2(x) := \frac{i + 1}{2}\left(x^2 + i\right),
    \quad\text{and}\quad g_3(x) := \frac{1}{2}\left(x^2 + (i - 1)x + i\right)
\]
are endomorphisms of $\mu_4$.
Furthermore, since $ix$ is a linear automorphism of $\mu_4$, each of the polynomials
$i^j g_k(i^\ell x)$
is an endomorphism of $\mu_4$. There are $28$ such maps, hence these are all quadratic endomorphisms of $\mu_4$.
\end{proof}

\begin{remark}
Note that both $(1,i,-1,-i)$ and $(0, 1, 1 + \sqrt{2}, 2 + \sqrt{2})$ support two-image portraits, hence Theorem \ref{thm 2 image}(3) delivers $2d(2d - 1) = 12$ of their quadratic endomorphisms. Moreover, each of these configurations has nontrivial automorphisms: the configuration $(1,i,-1,-i)$ has the nontrivial automorphisms $i^kx$ for $1 \le k \le 3$, and $(0, 1, 1 + \sqrt 2, 2 + \sqrt 2)$ has the nontrivial automorphism $-x + 2 + \sqrt 2$.
\end{remark}

In retrospect, it is clear that configurations with an exceptional number of linear automorphisms should have more than the average number of degree-$d$ endomorphisms, since any one such endomorphism gives rise to more by pre- and post-composing with automorphisms as in Proposition \ref{prop:4th_rou}. 
The automorphism group of any configuration is finite cyclic---typically trivial---so
in the interest of finding configurations in $\conf^n(\CC)$ with many low degree endomorphisms, a natural starting point is to consider the family $\mu_n$ of configurations of $n$th roots of unity in $\CC$, since $\mu_n$ is the unique smallest configuration (up to affine conjugation) with an automorphism group of order $n$. Note that the endomorphism semigroup of $q$ is independent of the ordering of the points in the configuration, hence the ordering of $\mu_n$ is immaterial.

By Lagrange interpolation, each of the $n^n$ portraits on $n$ points is realized on $\mu_n$ by some polynomial of degree at most $n - 1$. In Table \ref{tab:rou endos} (which is the same as Table~\ref{tab:rou endos intro} in the introduction) we show the size of $\endo_d(\mu_n)$ for $3 \leq n \leq 8$ and $0 \leq d \leq n - 1$.

\begin{table}[h]
\centering
    \caption{The cardinality of $\endo_d(\mu_n)$ for $3 \le n \le d$ and $0 \le d \le n - 1$}
    \label{tab:rou endos}
    \begin{tabular}{c||ccccccccc}
    \diagbox[height=20pt]{$n$}{$d$} & $0$ & $1$ & $2$  & $3$   & $4$    & $5$     & $6$      & $7$\\\hline
    $3$ & $3$ & $3$ & $21$ &       &        &         &          &\\
    $4$ & $4$ & $4$ & $28$ & $220$ &        &         &          &\\
    $5$ & $5$ & $5$ & $5$  & $105$ & $3005$ &         &          &\\
    $6$ & $6$ & $6$ & $6$  & $30$  & $1992$ & $44616$ &          &\\
    $7$ & $7$ & $7$ & $7$  & $7$   & $105$  & $4907$  & $818503$ &\\
    $8$ & $8$ & $8$ & $8$  & $8$   & $280$  & $2968$  & $186840$ & $16587096$
    \end{tabular}
\end{table}

Let $\zeta_n \in \mu_n$ be a primitive $n$th root of unity, and note that for each degree $d$, the $n$ monomials $\zeta_n^k x^d$ belong to $\endo_d(\mu_n)$. Table \ref{tab:rou endos} suggests that these are the only degree-$d$ endomorphisms of $\mu_n$ for $d < n/2$. We prove this in Theorem \ref{thm rou endos} below.

\begin{thm}
\label{thm rou endos}
If $n > 2d \geq 1$, then the only degree-$d$ polynomial endomorphisms of $\mu_n$ in $\CC[x]$ are of the form $\zeta_n^k x^d$ for some $k\geq 0$.
\end{thm}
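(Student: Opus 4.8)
The plan is to exploit the fact that $\mu_n$ lies on the unit circle, so that complex conjugation agrees with inversion on the values of an endomorphism. Write $f(x) = \sum_{k=0}^{d} a_k x^k$ with $a_d \neq 0$, and introduce the \emph{conjugate-reciprocal} polynomial
\[
    f^*(x) := x^d\,\overline{f}(1/x) = \sum_{k=0}^{d} \overline{a_k}\,x^{d-k},
\]
where $\overline{f}$ denotes the polynomial obtained by conjugating the coefficients of $f$. Constructing $f^*$ is the step I expect to carry the whole argument, since it converts the analytic constraint ``$f$ maps the $n$-torsion of the unit circle into itself'' into a rigid algebraic identity.

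First I would record the pointwise relation. For $\omega \in \mu_n$ we have $|\omega| = 1$, hence $\overline{\omega} = \omega^{-1}$, and since $f(\omega) \in \mu_n$ also lies on the unit circle, $\overline{f(\omega)} = f(\omega)^{-1}$. A one-line computation then gives $f^*(\omega) = \omega^d\,\overline{f(\omega)} = \omega^d f(\omega)^{-1}$, so that $f(\omega)\,f^*(\omega) = \omega^d$ for every $\omega \in \mu_n$. Equivalently, the polynomial $f(x)f^*(x) - x^d$ vanishes at all $n$ distinct points of $\mu_n$, and is therefore divisible by $x^n - 1 = \prod_{\omega \in \mu_n}(x-\omega)$.

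Next I would bring in the degree hypothesis, which is where $n > 2d$ is used and which is the heart of the proof. Since $\deg f = d$ and $\deg f^* \le d$, the polynomial $f(x)f^*(x) - x^d$ has degree at most $2d$. But it is divisible by $x^n - 1$, of degree $n > 2d$, and a nonzero polynomial cannot be divisible by one of strictly larger degree; hence
\[
    f(x)\,f^*(x) = x^d.
\]

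Finally I would read off the conclusion using unique factorization in $\CC[x]$. The only irreducible factor of $x^d$ is $x$, so $f(x) = c\,x^i$ for some $c \in \CC^\times$ and $0 \le i \le d$; as $\deg f = d$ this forces $i = d$, giving $f(x) = c\,x^d$. Then $f^*(x) = \overline{c}$, and the identity $ff^* = x^d$ yields $|c|^2 = 1$. Evaluating the endomorphism condition at $\omega = 1$ gives $c = f(1) \in \mu_n$, so $c = \zeta_n^k$ for some $k$, and $f(x) = \zeta_n^k x^d$ as claimed. The only point meriting a second glance is that $\deg f^* \le d$ holds regardless of whether the constant term $a_0$ vanishes, which is immediate from the definition of $f^*$ and keeps the degree bound $2d$ valid in all cases.
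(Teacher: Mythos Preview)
Your proof is correct and is in fact cleaner than the paper's. Both arguments ultimately rest on the identity $f(x)\,\overline{f}(1/x)=1$ (equivalently, your $f(x)f^*(x)=x^d$), but you reach it by a different and more direct path: you evaluate $f f^*$ at the $n$th roots of unity, use that $\mu_n$ is the zero set of $x^n-1$, and let the degree bound $2d<n$ force the polynomial identity. The paper instead conjugates by the M\"obius map $\ell(x)=i\frac{x-i}{x+i}$ sending the unit circle to $\widehat{\mathbb{R}}$, and argues that the condition $g(\beta)=\overline g(\beta)$ for $g=\ell\circ f\circ \ell^{-1}$ cuts out at most $2d$ real points unless $g=\overline g$ identically; unwinding the conjugation then yields $f(x)\overline{f}(1/x)=1$. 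Your approach is purely algebraic and tailored to $\mu_n$; the paper's approach establishes the stronger intermediate fact (the Cargo--Schneider result) that a degree-$d$ polynomial sends at most $2d$ points of the unit circle back to the unit circle unless it is of the form $\xi x^d$, which applies to any $>2d$ points on the circle and not just to $\mu_n$.
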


\begin{proof}
Let $\C$ denote the unit circle in $\CC$ and let $f(x) \in \CC[x]$ be a degree-$d$ polynomial. We claim that if there are more than $2d$ points $\alpha \in \C$ for which $f(\alpha) \in \C$, then $f(x) = \xi x^d$ for some $\xi \in \C$. The result then follows: if $f(\mu_n) \subseteq \mu_n$ with $n > 2d$, then $f(x) = \xi x^d$ and $\xi = f(1) \in f(\mu_n) \subseteq \mu_n$.

We now prove the claim. This is a special case of a result due to Cargo and Schneider \cite[Theorem 1]{cargo/schneider}; we provide a (slightly different) proof for completeness. Consider the M\"obius transformation
\[
    \ell(x) = i\Big(\frac{x - i}{x + i}\Big).
\]
Then $\ell(\C) = \wh{\RR} := \RR \cup \{\infty\}$, and the complex conjugate of $\ell(x)$ is
\begin{equation}\label{eq:mobius_conjugate}
    \overline{\ell}(x) = -i\Big(\frac{x + i}{x - i}\Big) = \ell^{-1}(x).
\end{equation}
Let $g(x) := \ell\circ f \circ \ell^{-1}$, and note that $\alpha, f(\alpha) \in \C$ if and only if $\ell(\alpha), g(\ell(\alpha)) \in \wh{\RR}$.

Fix $\beta \in \wh{\RR}$, and observe that $g(\beta) \in \wh{\RR}$ if and only if $g(\beta) = \overline{g}(\beta)$. Let $G(x,y) := [G_1(x,y) : G_2(x,y)]$ be the homogenization of $g$, where $G_1(x,y), G_2(x,y)$ are coprime, homogeneous polynomials of degree $d$, and write $\beta = [\beta_1 : \beta_2]$ as a point on $\PP^1(\CC)$ in homogeneous coordinates. Then $g(\beta) = \overline{g}(\beta)$ if and only if $[\beta_1 : \beta_2]$ is a zero of the degree-$2d$ form $G_1(x,y)\overline{G_2}(x,y) - \overline{G_1}(x,y)G_2(x,y)$. Thus, there are at most $2d$ distinct points $\beta \in \wh{\RR}$ with $g(\beta) \in \wh{\RR}$, hence at most $2d$ points $\alpha \in \C$ with $f(\alpha) \in \C$.

It follows from the previous paragraph that if there are more than $2d$ points $\alpha\in\C$ such that $f(\alpha) \in \C$, then $g(x) = \overline{g}(x)$. In this case, we have
\[
    \ell\circ f\circ \ell^{-1} = g = \overline{g} = \overline{\ell}\circ \overline{f}\circ \overline{\ell^{-1}} = \ell^{-1}\circ \overline{f} \circ \ell,
\]
where the last equality follows from \eqref{eq:mobius_conjugate}. The above equation implies that
$\ell^2\circ f = \overline{f}\circ \ell^2$,
and since $\ell^2(x) = 1/x$, we have
\[
    1 = f(x)\overline{f}(1/x).
\]
If $f(x) = c_d x^d + c_{d-1}x^{d-1} + \ldots + c_ex^e$, with $e \le d$ and $c_e \ne 0$, then the leading term of $f(x)\overline{f}(1/x)$ is $c_d\overline{c_e} x^{d - e}$. Thus $d = e$, $c_d\overline{c_d} = c_d\overline{c_e} = 1$, and $f(x) = c_dx^d = \xi x^d$ for some $\xi \in \C$, as claimed.
\end{proof}

Theorem~\ref{thm rou endos} is sharp in the sense that for each $d \ge 2$, there exist degree-$d$ endomorphisms of $\mu_{2d}$ that are not monomials. The polynomial $x^d$ realizes a two-image portrait on $\mu_{2d}$, since if $\zeta \in \mu_{2d}$, then $\zeta^d \in \{\pm 1\} \subseteq \mu_{2d}$. Thus, Theorem \ref{thm 2 image}(3) implies that $\mu_{2d}$ has at least $2d(2d - 1)$ degree-$d$ endomorphisms. These may be explicitly constructed as $\ell(x^d)$ where $\ell(x)$ is any linear polynomial such that $\ell(\pm 1) \in \mu_{2d}$, and only $2d$ of these endomorphisms are monomials. Note that Table \ref{tab:rou endos} implies that these two-image portrait realizations are the only cubic endomorphisms of $\mu_6$, but far from the only degree-4 endomorphisms of $\mu_8$.

\end{document}